\theoremstyle{plain}
\newtheorem{theorem}{Theorem}[section]
\newtheorem{corollary}{Corollary}[section]
\newtheorem{proposition}{Proposition}[section]
\newtheorem{lemma}{Lemma}[section]
\theoremstyle{definition}
\theoremstyle{remark}
\newtheorem{remark}{Remark}[section]
\begin{document}

\title{\large\bf
ON THE TRANSITION DENSITY FUNCTION OF THE DIFFUSION PROCESS 
GENERATED BY THE GRUSHIN OPERATOR
}
\author{\normalsize
Setsuo TANIGUCHI}
\date{}

\maketitle

\begin{abstract}
The short-time asymptotic behavior of the transition density  
function of the diffusion process generated by the
Grushin operator with parameter $\gamma>0$ will be
investigated, by using its explicit expression in terms of
expectation. 
Further the dependence on $\gamma$ of the asymptotics will be
seen.
\end{abstract}

\renewcommand{\thefootnote}{\fnsymbol{footnote}}
\footnote[0]{
\hspace{-9pt}
{\bf 2010 Mathematics Subject Classification}:
Primary 60J60; Secondary 58J65.

{\bf Keywords}: transition density function, 
short time asymptotics

{\bf Running head}: Short-time asymptotics 
}

\section{Introduction}
Let $d,d^\prime\in \mathbb{N}$ and $\gamma>0$. 
Define the vector fields on $\mathbb{R}^{d+d^\prime}$ by
\[
    V_i=\frac{\partial}{\partial x^i}
    \quad\text{and}\quad
    W_j=|x|^\gamma \frac{\partial}{\partial y^j}
   \quad 
    \text{for $1\le i\le d$ and $1\le j\le d^\prime$},
\]
where $x=(x^1,\dots,x^d)$ and $y=(y^1,\dots,y^{d^\prime})$
are the standard coordinate systems of $\mathbb{R}^d$ and  
$\mathbb{R}^{d^\prime}$, respectively.
The Grushin operator with parameter $\gamma$ is the
differential operator on $\mathbb{R}^{d+d^\prime}$ given by  
\[
    \Delta_{(\gamma)}=\sum_{i=1}^{d} V_i^2 
                 +\sum_{j=1}^{d^\prime} W_j^2.
\]
The operator is also represented as
\[
    \Delta_{(\gamma)}=\Delta_x+|x|^{2\gamma}\Delta_y,
\]
where $\Delta_x$ and $\Delta_y$ are the Laplacians in the
variables $x\in \mathbb{R}^d$ and  
$y\in\mathbb{R}^{d^\prime}$, respectively.
The studies of the Grushin operator go back to
those by Baouendi in 1967 (\cite{baouendi}) and by Grushin
in the beginning of the 1970's (\cite{grushin1,grushin2}). 
After them, many researches corresponding to the operator have
been made.
In particular, in the case when $\gamma$ is an even integer,
the associated heat kernel, i.e. the transition density
function, is studied in details
(cf. \cite{bfi,ccfi,ccggl}).

The main aim of this this paper is to investigate the
short-time asymptotics of the transition density function of
the diffusion process generated by $\frac12\Delta_{(\gamma)}$
for general $\gamma$s in a stochastically analytic method. 
More precisely, let 
$\{\{Z^{(x,y)}(t)\}_{t\in[0,\infty)},
  (x,y)\in\mathbb{R}^d \times \mathbb{R}^{d^\prime}\}$ be
the diffusion process on $\mathbb{R}^{d+d^\prime}$ generated
by $\frac12\Delta_{(\gamma)}$. 
For our purposes, we shall first show the existence of the
transition density function $p_T((x,y),(\xi,\eta))$
of this diffusion process;
\[
    \mathbf{E}[f(Z^{(x,y)}(T))]
    =\int_{\mathbb{R}^d\times \mathbb{R}^{d^\prime}} 
      f(\xi,\eta) p_T((x,y),(\xi,\eta))d\xi d\eta
\]
for any $T>0$, 
$(x,y)\in \mathbb{R}^d\times \mathbb{R}^{d^\prime}$,
and bounded 
$f\in C(\mathbb{R}^d\times \mathbb{R}^{d^\prime})$, where
$\mathbf{E}$ stands for the expectation with respect to the
underlying probability measure.
At the same time, we shall establish an explicit expression
of $p_T((x,y),(\xi,\eta))$ in terms of expectation, by which
we will conclude the continuity of $p_T$. 
See Theorem~\ref{thm.main}.  
Further, with the help of this explicit expression, the
short-time asymptotics of $p_T((x,y),(\xi,\eta))$ will be
investigated in the on-diagonal case
(Theorem~\ref{thm.asympt.on}) and in the off-diagonal case
(Theorems~\ref{thm.asympt.off} and
\ref{thm.asympt.off.00}). 
In all cases, we shall see that the parameter $\gamma$,
i.e.,  the degeneracy of $\Delta_{(\gamma)}$ on the plane
$\{x=0\}$,  affects the asymptotics. 
For example, in Theorem~\ref{thm.asympt.off}, we shall show
the convergence
\[
    \lim_{|\eta-y|\to\infty} |\eta-y|^{-\frac{2}{1+\gamma}}
      \lim_{T\to0}T\log p_T((x,y),(\xi,\eta))
    =C(\gamma),
\]
where $C(\gamma)$ is a constant depending only on
$\gamma$.
The explicit expression of $C(\gamma)$ will be given in the 
theorem.

\section{Preliminaries}\label{sec:preliminaries}

\subsection{Density function}

Let $T>0$.
Denote by $\mathcal{W}_T$ (resp. $\widehat{\mathcal{W}}_T$)
the space of continuous functions $w$ from $[0,T]$ to 
$\mathbb{R}^d$ (resp. $\mathbb{R}^{d^\prime}$) with
$w(0)=0$, and by $\mu_T$ (resp. $\widehat{\mu}_T$) the Wiener
measure on $\mathcal{W}_T$ (resp. $\widehat{\mathcal{W}}_T$).
The product space $\mathcal{W}_T\times\widehat{\mathcal{W}}_T$
equipped with the product measure 
$\mu_T\times\widehat{\mu}_T$ is the $(d+d^\prime)$-dimensional
Wiener space.
Let $b=\{b(t)=(b^1(t),\dots,b^d(t))\}_{t\in[0,T]}$
(resp. $\widehat{b}=\{\widehat{b}(t)
     =(\widehat{b}^1(t),\dots,\widehat{b}^{d^\prime}(t))
      \}_{t\in[0,T]}$)
be the coordinate process on $\mathcal{W}_T$ 
(resp. $\widehat{\mathcal{W}}_T$);
$b(t):\mathcal{W}_T\ni w\mapsto b(t)(w)=w(t)\in \mathbb{R}^d$
and 
$\widehat{b}(t):\widehat{\mathcal{W}}_T\ni \widehat{w}
  \mapsto 
  \widehat{b}(t)(\widehat{w})=\widehat{w}(t)
  \in \mathbb{R}^{d^\prime}$.
Denote by $\mathcal{F}_t$ the $\sigma$-field on
$\mathcal{W}_T$ generated by 
$\{b(s)^{-1}(A)\mid s\le t, 
 A\in \mathcal{B}(\mathbb{R}^d)\}$, 
$\mathcal{B}(\mathbb{R}^d)$ being the Borel field of
$\mathbb{R}^d$. 
In the standard manner, 
$\mathcal{F}_t$ can be thought of as a $\sigma$-field on
 $\mathcal{W}_T\times\widehat{\mathcal{W}}_T$.

Let $N\in \mathbb{N}$, and 
$\phi=\{\phi(t)
  =(\phi_j^i(t))_{1\le i\le N,1\le j\le d^\prime}\}_{t\in[0,T]}$ 
be an $\mathbb{R}^{N\times d^\prime}$-valued
($\mathcal{F}_t$)-progressively measurable process on 
$\mathcal{W}_T$ satisfying that
\begin{equation}\label{eq:phi.int}
    \int_0^T \|\phi(t)\|^2 dt\in L^{\infty-}(\mu_T)
    \equiv \bigcap_{p\in(1,\infty)}L^p(\mu_T),
\end{equation}
where $\mathbb{R}^{N\times d^\prime}$ is the space of real 
$N\times d^\prime$-matrices and $\|\cdot\|$ denotes the
Hilbert-Schmidt norm on it.
Let $\mathcal{G}_t$ be the $\sigma$-field on
$\mathcal{W}_T\times \widehat{\mathcal{W}}_T$ generated by 
$\bigl\{(b(s),\widehat{b}(s))^{-1}(A) \mid
   s\le t, A\in \mathcal{B}(\mathbb{R}^{d+d^\prime})\bigr\}$.
Thinking of $\phi$ as a ($\mathcal{G}_t$)-progressively
measurable stochastic process on 
$\mathcal{W}_T\times \widehat{\mathcal{W}}_T$, 
we define the $\mathbb{R}^N$-valued random variable $F$ by
the stochastic integral 
\[
    F=\int_0^T \phi(t) d\widehat{b}(t),
\]
that is, the $i$th component $F^i$ of $F$ is given by  
\[
    F^i=\sum_{j=1}^{d^\prime} \int_0^T \phi_j^i(t)
        d\widehat{b}^j(t) \quad\text{for }1\le i\le N.
\]
Set 
\[
   V_\phi=\int_0^T \phi(t)\phi(t)^\dagger dt,
\]
where $A^\dagger$ stands for the transposed matrix of $A$.

\begin{lemma}\label{lem.density.phi}
Suppose that
\begin{equation}\label{l.density.phi.1}
   \frac1{\det V_\phi}   \in L^{\infty-}(\mu_T).
\end{equation}
Define $q_F:\mathbb{R}^N\to \mathbb{R}$ by
\[
    q_F(\eta)
    =\int_{\mathcal{W}_T}
       \frac1{\sqrt{2\pi}^N\sqrt{\det V_\phi}}
       \exp\biggl(-\frac{\langle V_\phi^{-1}\eta,\eta
          \rangle_{\mathbb{R}^N}}2\biggr) 
       d\mu_T 
    \quad \text{for }\eta\in \mathbb{R}^N,
\]
where $\langle\cdot,\cdot\rangle_{\mathbb{R}^N}$ stands for
the inner product of $\mathbb{R}^N$.
Then $q_F$ is the $C^\infty$-distribution density function
of $F$ with respect to the Lebesgue measure on
$\mathbb{R}^N$. 
\end{lemma}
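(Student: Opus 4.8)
The plan is to exploit the Gaussian conditional structure: given the path $w \in \mathcal{W}_T$ (equivalently, conditioning on $\mathcal{F}_T$), the integrand $\phi$ is completely determined, so $F = \int_0^T \phi(t)\,d\widehat b(t)$ is, conditionally, a sum of independent Gaussian stochastic integrals against the $\widehat b^j$. Hence, conditionally on $w$, $F$ is a centered $\mathbb{R}^N$-valued Gaussian vector with covariance matrix exactly $V_\phi(w) = \int_0^T \phi(t)\phi(t)^\dagger\,dt$. Under hypothesis \eqref{l.density.phi.1}, $V_\phi$ is a.s. invertible, so this conditional Gaussian law has the density $\eta \mapsto (2\pi)^{-N/2}(\det V_\phi)^{-1/2}\exp(-\tfrac12\langle V_\phi^{-1}\eta,\eta\rangle)$. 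Integrating this conditional density over $w$ with respect to $\mu_T$ — which is legitimate by Fubini/Tonelli since the conditional density is nonnegative — yields precisely $q_F$, and shows $q_F$ is the density of $F$ with respect to Lebesgue measure on $\mathbb{R}^N$.

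To make the conditional Gaussianity rigorous, I would first reduce to bounded $\phi$ by a stopping-time truncation (using \eqref{eq:phi.int}), or alternatively verify directly that, for fixed $w$, the map $t \mapsto \phi(t)(w)$ is in $L^2([0,T];\mathbb{R}^{N\times d^\prime})$ for $\mu_T$-a.e.\ $w$, again by \eqref{eq:phi.int} and Fubini. Then, using the independence of $\widehat b$ from $b$ and the product structure of the Wiener space $\mathcal{W}_T \times \widehat{\mathcal{W}}_T$, compute the conditional characteristic function: for $\xi \in \mathbb{R}^N$,
\[
    \mathbf{E}\bigl[e^{i\langle\xi,F\rangle}\,\big|\,\mathcal{F}_T\bigr]
    = \exp\Bigl(-\tfrac12\langle V_\phi\,\xi,\xi\rangle\Bigr),
\]
which identifies the conditional law as $N(0,V_\phi)$. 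This step is essentially the It\^o isometry applied path-by-path together with the fact that a Wiener integral of a deterministic $L^2$ kernel is Gaussian; the progressive measurability of $\phi$ with respect to $\mathcal{F}_t$ (not $\mathcal{G}_t$) is what guarantees that $\phi$ is ``deterministic'' once $w$ is frozen, so there is no Skorokhod-integral subtlety.

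Finally, I would check the $C^\infty$ regularity of $q_F$. Write $g(\eta,w) = (2\pi)^{-N/2}(\det V_\phi(w))^{-1/2}\exp(-\tfrac12\langle V_\phi(w)^{-1}\eta,\eta\rangle)$. For $\eta$ in a bounded set, and any multi-index derivative $\partial_\eta^\alpha g$, the result is a polynomial in $\eta$ and in the entries of $V_\phi^{-1}$ times $g$ itself; each such factor is controlled by $\|V_\phi^{-1}\|$ and hence, via Cramer's rule, by $(\det V_\phi)^{-1}$ times a power of $\|V_\phi\| \le \int_0^T\|\phi(t)\|^2\,dt$. By \eqref{eq:phi.int} and \eqref{l.density.phi.1}, together with H\"older's inequality, these are all in $L^1(\mu_T)$, uniformly for $\eta$ in compacts. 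Differentiation under the integral sign (dominated convergence) then gives $q_F \in C^\infty(\mathbb{R}^N)$. The main obstacle is the bookkeeping in this last step: obtaining the $\mu_T$-integrable domination for all derivatives simultaneously, which is exactly where the $L^{\infty-}$ (rather than merely $L^1$ or $L^p$ for fixed $p$) hypotheses in \eqref{eq:phi.int} and \eqref{l.density.phi.1} are used.
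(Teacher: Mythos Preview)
Your proposal is correct and follows essentially the same route as the paper: both condition on $\mathcal{F}_T$ to reduce $F$ to a deterministic Wiener integral against $\widehat b$, identify the conditional law as $N(0,V_\phi)$, and then average the resulting Gaussian density over $\mu_T$, with $C^\infty$-regularity coming from dominated convergence using the $L^{\infty-}$ hypotheses on $\int_0^T\|\phi\|^2\,dt$ and $(\det V_\phi)^{-1}$. The paper's version is slightly terser (it simply writes $\mathbf{E}[f(F)\mid\mathcal{F}_T]=\mathbf{E}[f(\mathcal{I}_A)]\big|_{A=\phi}$ without the characteristic-function verification), but the argument is the same.
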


\begin{proof}
Let $\widetilde{V_\phi}$ be the cofactor matrix of $V_\phi$.
Then, $V_\phi^{-1}=(\det V_\phi)^{-1}\widetilde{V_\phi}$.
By the assumptions \eqref{eq:phi.int} and
\eqref{l.density.phi.1}, 
$\|V_\phi^{-1}\|\in L^{\infty-}(\mu_T\times\widehat{\mu}_T)$. 
Applying the dominated convergence theorem, we see that
$q_F\in C^\infty(\mathbb{R}^N)$.

For a Borel measurable 
$A:[0,T]\to \mathbb{R}^{N\times d^\prime}$, set
\[
    \mathcal{I}_A=\int_0^T A(t)d\widehat{b}(t)
    \quad \text{and}\quad
    v_A=\int_0^T A(t)A(t)^\dagger dt.
\]
It holds that
\[
    \mathbf{E}[f(F)|\mathcal{F}_T]
    =\mathbf{E}[f(\mathcal{I}_A)]\Bigr|_{A=\phi} 
    \quad\text{for any bounded }f\in C(\mathbb{R}^N),
\]
where $\mathbf{E}[\,\cdot\,|\mathcal{F}_T]$ denotes the
conditional expectation with respect to
$\mu_T\times\widehat{\mu}_T$ given $\mathcal{F}_T$
and $\mathbf{E}[\,\cdot\,]$ does the expectation with
respect to $\mu_T\times\widehat{\mu}_T$.
Moreover, if $\det v_A\ne0$, then $\mathcal{I}_A$ obeys the 
$N$-dimensional normal distribution with mean $0$ and
covariance $v_A$. 
Thus, it holds
\begin{align*}
    & \int_{\mathcal{W}_T\times\widehat{\mathcal{W}}_T} f(F)
      d(\mu_T\times\widehat{\mu}_T)
      =\int_{\mathcal{W}_T\times\widehat{\mathcal{W}}_T} 
       \mathbf{E}[f(F)|\mathcal{F}_T]  
       d(\mu_T\times\widehat{\mu}_T)
    \\
    & \qquad
      =\int_{\mathcal{W}_T\times\widehat{\mathcal{W}}_T} 
       \biggl(\int_{\mathbb{R}^N} 
       \frac{f(\eta)}{\sqrt{2\pi}^N\sqrt{\det V_\phi}}
       \exp\biggl(-\frac{\langle V_\phi^{-1}\eta,\eta
          \rangle_{\mathbb{R}^N}}2\biggr) d\eta\biggr)
          d(\mu_T\times\widehat{\mu}_T)
\end{align*}
for any bounded $f\in C(\mathbb{R}^N)$.
This implies
\[
    \int_{\mathcal{W}_T\times\widehat{\mathcal{W}}_T} f(F)
      d(\mu_T\times\widehat{\mu}_T)
    =\int_{\mathbb{R}^N} f(\eta)q_F(\eta)d\eta,
\]
which means $q_F$ is the distribution density function of
$F$ with respect to the Lebesgue measure.
\end{proof}

\subsection{Exponential integrability}

Let $\mathcal{W}_T^0=\{w\in \mathcal{W}_T\mid w(T)=0\}$,
$\mu_T^0$ be the pinned Wiener measure on it, and
$\beta=\{\beta(t)=(\beta^1(t),\dots,\beta^d(t))\}_{t\in[0,T]}$
be the coordinate process on $\mathcal{W}_T^0$, i.e.,
$\beta(t)=b(t)|_{\mathcal{W}_T^0}$.
$\mathcal{W}_T^0$ is a real separable Banach space with the
uniform convergence norm $\|\,\cdot\,\|_{\mathcal{W}_T^0}$
inherited from $\mathcal{W}_T$; 
\[
    \|w\|_{\mathcal{W}_T^0}
    =\sup_{t\in[0,T]}|w(t)|
    \quad\text{for }w\in \mathcal{W}_T^0.
\]

Given $\theta\in(0,\frac12)$ and $p\in(1,\infty)$ with
$p\theta>1$, define
\[
    \|\psi\|_{T,p,\theta}
    =\biggl(\int_{(0,T)^2} 
         \frac{|\psi(u)-\psi(v)|^p}{|u-v|^{1+p\theta}}
       dudv\biggr)^{\frac1{p}}
    \quad\text{for }\psi\in C([0,T];\mathbb{R}^d).
\]
The aim of this subsection is to show

\begin{proposition}\label{prop.exp.int}
There exists a $\delta\in(0,\infty)$ such that
$\exp(\delta \|\cdot\|_{T,p,\theta}^2)\in L^1(\mu_T^0)$.
\end{proposition}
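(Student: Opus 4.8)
The plan is to establish the Gaussian integrability of the Besov–Slobodeckij seminorm $\|\beta\|_{T,p,\theta}$ on pinned Wiener space by combining two classical ingredients: the fact that $\|\cdot\|_{T,p,\theta}^p$ is (up to a constant) a measurable seminorm dominated by a Hölder-type quantity on the Cameron–Martin space, and the Fernique theorem. First I would observe that for fixed admissible $\theta$ and $p$ (with $p\theta>1$), the functional $\psi\mapsto\|\psi\|_{T,p,\theta}$ is a well-defined $[0,\infty]$-valued seminorm on $C([0,T];\mathbb{R}^d)$, and that $\beta$ has almost surely $\alpha$-Hölder sample paths for every $\alpha<\tfrac12$; choosing $\theta<\alpha<\tfrac12$ one gets $|\beta(u)-\beta(v)|\le K_\alpha|u-v|^\alpha$ with $K_\alpha$ an a.s.\ finite random variable, whence $\|\beta\|_{T,p,\theta}^p\le K_\alpha^p\int_{(0,T)^2}|u-v|^{p(\alpha-\theta)-1}\,du\,dv<\infty$ since $p(\alpha-\theta)-1>-1$. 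This shows $\|\beta\|_{T,p,\theta}<\infty$ $\mu_T^0$-a.s., i.e.\ the seminorm is finite on a full-measure linear subspace.

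Next I would invoke Fernique's theorem in the form valid on any real separable Banach space carrying a centered Gaussian measure: here the Banach space is $\mathcal{W}_T^0$ with the uniform norm and $\mu_T^0$ is centered Gaussian. Fernique's theorem does not apply directly to $\|\cdot\|_{T,p,\theta}$ because it is not the Banach-space norm, but its standard strengthening (sometimes attributed to Fernique, or deducible from the isoperimetric/concentration inequality for Gaussian measures) states that \emph{any} measurable seminorm which is finite almost everywhere has a Gaussian-integrable square: if $q$ is a measurable seminorm on $\mathcal{W}_T^0$ with $q<\infty$ $\mu_T^0$-a.s., then there is $\delta>0$ with $\int\exp(\delta q^2)\,d\mu_T^0<\infty$. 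To apply this I must check that $q=\|\cdot\|_{T,p,\theta}$ is $\mu_T^0$-measurable; this follows because it is a lower-semicontinuous function of $w$ (being a supremum over finite Riemann-sum approximations of the double integral, each of which is continuous on $\mathcal{W}_T^0$), hence Borel. Combining this with the a.s.\ finiteness from the previous paragraph yields the claimed $\delta$.

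The main obstacle, and the point that needs the most care, is the passage from "finite almost surely" to "Gaussian-square-integrable" for a seminorm that is \emph{not} the ambient Banach norm: one must either cite the appropriate generalization of Fernique's theorem for measurable seminorms (e.g.\ via the zero–one law for Gaussian measures plus the concentration inequality), or give a self-contained argument. A clean self-contained route is to use the symmetrization argument underlying Fernique's original proof: if $w,w'$ are independent copies of $\beta$ under $\mu_T^0$, then $(w+w')/\sqrt2$ and $(w-w')/\sqrt2$ are again independent copies, and by the triangle inequality and homogeneity of $q$ one derives the recursive estimate $\mu_T^0(q>t\sqrt2)\,\mu_T^0(q\le s)\ \le\ \mu_T^0(q>t)\,\mu_T^0(q>s)$-type bounds (more precisely $\mathbf{P}(q(w)>t\sqrt2,\,q(w')\le s)$ controls $\mathbf{P}(q>t)^2$ after rotating), iterating which gives a doubly-exponential decay $\mu_T^0(q>t)\le C e^{-\delta t^2}$ for $t$ large, once one has fixed an $s$ with $\mu_T^0(q\le s)>1/2$; such an $s$ exists precisely because $q<\infty$ a.s. Integrating the tail bound then gives $\exp(\delta' q^2)\in L^1(\mu_T^0)$ for $\delta'<\delta$, which is the assertion of the proposition. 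I would present the finiteness step in full and then invoke either the cited Fernique-type theorem or this symmetrization argument for the tail bound, flagging that the rotational invariance of $\mu_T^0$ under $(w,w')\mapsto((w+w')/\sqrt2,(w-w')/\sqrt2)$ — valid because $\mu_T^0$ is a centered Gaussian measure on the Banach space $\mathcal{W}_T^0$ — is the structural fact that makes the whole argument work.
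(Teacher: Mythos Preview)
Your proposal is correct and takes a genuinely different route from the paper. The paper does not apply Fernique's theorem directly on $\mathcal{W}_T^0$; instead it builds an auxiliary Banach space $\mathcal{W}_{T,p,\theta}^0$ as the completion of the Cameron--Martin space $\mathcal{H}_T^0$ under $\|\cdot\|_{T,p,\theta}$, uses the Garsia--Rodemich--Rumsey inequality to embed it continuously into $\mathcal{W}_T^0$, proves (via reflexivity of fractional Sobolev spaces and a density argument for the dual) that the restricted measure $\mu_{T,p,\theta}^0$ turns $(\mathcal{W}_{T,p,\theta}^0,\mathcal{H}_T^0,\mu_{T,p,\theta}^0)$ into an abstract Wiener space, and only then invokes the \emph{classical} Fernique theorem for the Banach norm on that space. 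Your approach bypasses all of this machinery by appealing to the measurable-seminorm version of Fernique (or, equivalently, the self-contained rotational symmetrization argument you sketch) on the original pinned Wiener space: once you know $\|\beta\|_{T,p,\theta}<\infty$ a.s.\ from H\"older regularity and that the seminorm is Borel, the tail bound follows immediately. Your argument is shorter and more elementary; the paper's argument yields the extra structural information that $\|\cdot\|_{T,p,\theta}$ is itself an abstract-Wiener-space norm for the Brownian bridge, which is not needed for the proposition but is of independent interest. One small quibble: your justification of measurability via ``supremum of Riemann sums'' is a bit loose---the clean argument is simply Fatou's lemma applied to the nonnegative integrand under uniform convergence $w_n\to w$, which gives lower semicontinuity of $w\mapsto\|w\|_{T,p,\theta}^p$ directly.
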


For $x,\xi\in \mathbb{R}^d$, define 
$\ell^{T,x,\xi}\in C([0,T];\mathbb{R}^d)$ by
\[
    \ell^{T,x,\xi}(t)=x+\frac{t}{T}(\xi-x)
    \quad\text{for }t\in[0,T].
\]
This proposition yields the following estimation.

\begin{corollary}\label{cor.exp.int}
For the same $\delta$ as described in
Proposition~\ref{prop.exp.int}, it holds
\[
    \sup_{|\xi-x|\le R} \int_{\mathcal{W}_T^0} 
      \exp\biggl(\frac{\delta}2
         \|\beta+\ell^{T,x,\xi}\|_{T,p,\theta}^2\biggr)
      d\mu_T^0 <\infty
     \quad\text{for any }R\in[0,\infty).
\]
\end{corollary}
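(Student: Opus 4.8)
The plan is to reduce the corollary to Proposition~\ref{prop.exp.int} by controlling the seminorm $\|\cdot\|_{T,p,\theta}$ of the translated path $\beta + \ell^{T,x,\xi}$ in terms of $\|\beta\|_{T,p,\theta}$ and a deterministic quantity depending only on $R$. The key observation is that $\|\cdot\|_{T,p,\theta}$ is a seminorm — it satisfies the triangle inequality as an $L^p$-norm of the difference quotient — so that $\|\beta + \ell^{T,x,\xi}\|_{T,p,\theta} \le \|\beta\|_{T,p,\theta} + \|\ell^{T,x,\xi}\|_{T,p,\theta}$. First I would compute $\|\ell^{T,x,\xi}\|_{T,p,\theta}$ explicitly: since $\ell^{T,x,\xi}(u) - \ell^{T,x,\xi}(v) = \frac{u-v}{T}(\xi-x)$, the defining double integral becomes
\[
    \|\ell^{T,x,\xi}\|_{T,p,\theta}^p
    = \frac{|\xi-x|^p}{T^p} \int_{(0,T)^2} |u-v|^{p-1-p\theta}\, du\, dv,
\]
and the exponent $p-1-p\theta = (p-1) - p\theta > -1$ because $\theta < \tfrac12 < 1$ forces $p\theta < p < p + (p-1)$, so the integral converges and equals a finite constant $c_{T,p,\theta}$ times $T^{p+1}$. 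Hence $\|\ell^{T,x,\xi}\|_{T,p,\theta} = c'_{T,p,\theta}\,|\xi-x| \le c'_{T,p,\theta} R$ whenever $|\xi-x|\le R$; call this bound $K_R$.

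Next I would combine the triangle inequality with the elementary estimate $(a+b)^2 \le 2a^2 + 2b^2$ to get, uniformly over $|\xi-x|\le R$,
\[
    \frac{\delta}{2}\|\beta+\ell^{T,x,\xi}\|_{T,p,\theta}^2
    \le \frac{\delta}{2}\bigl(\|\beta\|_{T,p,\theta} + K_R\bigr)^2
    \le \delta\|\beta\|_{T,p,\theta}^2 + \delta K_R^2.
\]
Exponentiating and taking the supremum over $|\xi-x|\le R$ (the right-hand side is now independent of $x,\xi$) yields
\[
    \sup_{|\xi-x|\le R}\int_{\mathcal{W}_T^0}
      \exp\!\Bigl(\tfrac{\delta}{2}\|\beta+\ell^{T,x,\xi}\|_{T,p,\theta}^2\Bigr)d\mu_T^0
    \le e^{\delta K_R^2}\int_{\mathcal{W}_T^0}
      \exp\bigl(\delta\|\beta\|_{T,p,\theta}^2\bigr)d\mu_T^0,
\]
and the last integral is finite by Proposition~\ref{prop.exp.int} (applied with the path $\beta$ in place of the generic $w$, since the coordinate process $\beta$ has law $\mu_T^0$). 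This completes the argument.

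I do not expect any genuine obstacle here; the corollary is essentially a packaging of Proposition~\ref{prop.exp.int} together with the subadditivity of the seminorm. The only point requiring minor care is the convergence of the deterministic integral $\int_{(0,T)^2}|u-v|^{p-1-p\theta}\,du\,dv$, i.e.\ checking that the exponent exceeds $-1$; this follows from $\theta\in(0,\tfrac12)$ and $p>1$ (indeed $p-1-p\theta > -1 \iff p(1-\theta) > 0$, which is automatic), so the integrability constraint $p\theta>1$ imposed in the text is not even needed for this step. The factor $\tfrac{\delta}{2}$ rather than $\delta$ in the statement is exactly what provides the room to absorb the cross term $2\|\beta\|_{T,p,\theta}K_R$ via $(a+b)^2\le 2a^2+2b^2$; one could alternatively keep the full $\delta$ and use Young's inequality $2ab \le \varepsilon a^2 + \varepsilon^{-1}b^2$ with small $\varepsilon$, but the stated constant makes the bookkeeping trivial.
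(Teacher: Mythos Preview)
Your proposal is correct and follows essentially the same route as the paper: compute $\|\ell^{T,x,\xi}\|_{T,p,\theta}$ explicitly (the paper gives the closed form $\bigl(2/[T^{p\theta-1}(1-\theta)p\{(1-\theta)p+1\}]\bigr)^{1/p}|\xi-x|$), apply the triangle inequality for the seminorm, and conclude via Proposition~\ref{prop.exp.int}. The paper leaves the inequality $(a+b)^2\le 2a^2+2b^2$ implicit where you spell it out, but the arguments are otherwise identical.
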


\begin{proof}
By a straightforward computation, we have
\[
    \|\ell^{T,x,\xi}\|_{T,p,\theta}
    =\biggl(\frac{2}{T^{p\theta-1}
             (1-\theta)p\{(1-\theta)p+1\}}
     \biggr)^{\frac1{p}}|\xi-x|.
\]
Since 
$\|\beta+\ell^{T,x,\xi}\|_{T,p,\theta}\le
 \|\beta\|_{T,p,\theta}+\|\ell^{T,x,\xi}\|_{T,p,\theta}$,
the desired estimation follows from
Proposition~\ref{prop.exp.int}. 
\end{proof}

In the remaining of this subsection, we prove
Proposition~\ref{prop.exp.int}.
Let $\mathcal{H}_T^0$ be the Cameron-Martin subspace of
$\mathcal{W}_T^0$; it is the space of all 
$h\in \mathcal{W}_T^0$ which is absolutely continuous and
possesses a square integrable derivative $h^\prime$ on
$[0,T]$. 
$\mathcal{H}_T^0$ is a real separable Hilbert space with the
norm $\|\cdot\|_{\mathcal{H}_T^0}$ corresponding to the inner
product
\[
    \langle h,g\rangle_{\mathcal{H}_T^0}
    =\int_0^T \langle h^\prime(t),g^\prime(t)
        \rangle_{\mathbb{R}^d} dt
    \quad\text{for }h,g\in \mathcal{H}_T^0.
\]

Observe 
\begin{equation}\label{eq:h-conti}
    |h(t)-h(s)|\le \|h\|_{\mathcal{H}_T^0}|t-s|^{\frac12}
    \quad\text{for $h\in \mathcal{H}_T^0$ and }
     s,t\in[0,T].
\end{equation}
Hence
\begin{equation}\label{p.exp.int.20}
    \|h\|_{T,p,\theta} \le 
      \biggl(\frac{2T^{(\frac12-\theta)p+1}}{
          (\frac12-\theta)p
          \bigl((\frac12-\theta)p+1\bigr)}
      \biggr)^{\frac1{p}}
      \|h\|_{\mathcal{H}_T^0}
    \quad\text{for }h\in \mathcal{H}_T^0.
\end{equation}
Let $\mathcal{W}_{T,p,\theta}^0$ be the completion of 
$\mathcal{H}_T^0$ with respect to $\|\cdot\|_{T,p,\theta}$.

The Garsia-Rodemich-Rusey lemma 
(cf. \cite[Lemma~3.1]{st-grushin}) asserts  
\[
    |\psi(t)-\psi(s)|
    \le 2^{3+\frac2{r}}\frac{r}{\alpha-2}
       \biggl(\int_{(0,T)^2}
       \frac{|\psi(u)-\psi(v)|^r}{|u-v|^\alpha}
        dudv\biggr)^\frac1{r}
       |t-s|^{\frac{\alpha-2}{r}}
\]
for any $\alpha>2$, $r>0$, $s,t\in[0,T]$, and 
$\psi\in C([0,T];\mathbb{R}^d)$ with 
$\int_{(0,T)^2} \frac{|\psi(u)-\psi(v)|^r}{|u-v|^\alpha}
        dudv<\infty$.
Setting $r=p$ and $\alpha=1+p\theta$, we obtain 
\begin{equation}\label{eq:garsia0}
    |\psi(t)-\psi(s)|
    \le 2^{3+\frac2{p}}\frac{p}{p\theta-1}
    \|\psi\|_{T,p,\theta}|t-s|^{\theta-\frac1{p}}
\end{equation}
for $s,t\in[0,T]$ and $\psi\in C([0,T];\mathbb{R}^d)$ with
$\|\psi\|_{T,p,\theta}<\infty$.
This inequality with $s=0$ yields 
\begin{equation}\label{eq:garsia1}
    \sup_{t\in[0,T]}|\psi(t)|
    \le 2^{3+\frac2{p}}\frac{p}{p\theta-1}
        T^{\theta-\frac1{p}} \|\psi\|_{T,p,\theta}
    \end{equation}
for $\psi\in \mathcal{W}_T^0$ with 
$\|\psi\|_{T,p,\theta}<\infty$.

Let $\{h_n\}_{n=1}^\infty\subset \mathcal{H}_T^0$ be a Cauchy
sequence with respect to $\|\cdot\|_{T,p,\theta}$.
By \eqref{eq:garsia1}, it holds
\[
    \|h_n-h_m\|_{\mathcal{W}_T^0}
    \le 2^{3+\frac2{p}}\frac{p}{p\theta-1}
        T^{\theta-\frac1{p}} \|h_n-h_m\|_{T,p,\theta}
    \quad\text{for }n,m\in \mathbb{N}.
\]
Hence $\{h_n\}_{n=1}^\infty$ converges in $\mathcal{W}_T^0$ to
some point in $\mathcal{W}_T^0$.
Moreover, if two Cauchy sequences
$\{h_n\}_{n=1}^\infty,\{\hat{h}_n\}_{n=1}^\infty
  \subset \mathcal{H}_T^0$ 
with respect to $\|\cdot\|_{T,p,\theta}$ are equivalent,
i.e.,
$\lim_{n\to\infty}\|h_n-\hat{h}_n\|_{T,p,\theta}=0$, then,
by \eqref{eq:garsia1} again,
$\lim_{n\to\infty}\|h_n-\hat{h}_n\|_{\mathcal{W}_T^0}=0$.
Thus, each equivalent class of Cauchy sequences 
with respect to $\|\cdot\|_{T,p,\theta}$ is 
identified with the limit point in $\mathcal{W}_T^0$.
In this manner, we obtain the inclusion 
\begin{equation}\label{eq:inclusion}
    \mathcal{W}_{T,p,\theta}^0\subset \mathcal{W}_T^0.
\end{equation}
Further, \eqref{eq:garsia1} also yields the continuity of
this inclusion;
\begin{equation}\label{eq:continuous}
    \|w\|_{\mathcal{W}_T^0}
    \le 2^{3+\frac2{p}}\frac{p}{p\theta-1}
        T^{\theta-\frac1{p}} \|w\|_{T,p,\theta}
    \quad\text{for }w\in \mathcal{W}_{T,p,\theta}^0.
\end{equation}
Denoting by $\mathcal{W}_T^{0*}$ and
$\mathcal{W}_{T,p,\theta}^{0*}$ the duals spaces of
$\mathcal{W}_T^0$ and $\mathcal{W}_{T,p,\theta}^0$,
respectively, we then have
\[
    \mathcal{W}_T^{0*}\subset \mathcal{W}_{T,p,\theta}^{0*}.
\]

We moreover prepare a functional analytical lemma.

\begin{lemma}\label{lem.C^alpha}
{\rm (i)} 
Let $\alpha\in(\theta,\frac12)$ and denote by 
$C_T^\alpha\subset \mathcal{W}_T^0$ be the space of all
$\alpha$-H\"older continuous $w\in \mathcal{W}_T^0$.
Then $C_T^\alpha\subset \mathcal{W}_{T,p,\theta}^0$.
\\
{\rm (ii)}
$\mathcal{W}_T^{0*}$ is dense in
$\mathcal{W}_{T,p,\theta}^{0*}$.
\end{lemma}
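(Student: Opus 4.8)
The plan is to handle the two parts in turn. For (i), note first that every $w\in C_T^\alpha$ already satisfies $\|w\|_{T,p,\theta}<\infty$: with $[w]_\alpha$ the $\alpha$-Hölder seminorm of $w$, the integrand defining $\|w\|_{T,p,\theta}^p$ is at most $[w]_\alpha^p\,|u-v|^{p(\alpha-\theta)-1}$, and $p(\alpha-\theta)-1>-1$ since $\alpha>\theta$, so the double integral converges. The substantive point is to produce $w$ as a $\|\cdot\|_{T,p,\theta}$-limit of Cameron--Martin paths. I would take the piecewise linear interpolants $w_n$ of $w$ along the uniform partition $\{kT/n\}_{k=0}^{n}$; each $w_n$ is Lipschitz with $w_n(0)=w_n(T)=0$, hence $w_n\in\mathcal H_T^0$, one has $\|w_n-w\|_{\mathcal W_T^0}\to0$ by uniform continuity of $w$, and a routine estimate gives $[w_n]_\alpha\le C[w]_\alpha$ with $C$ independent of $n$. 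To deduce $\|w_n-w\|_{T,p,\theta}\to0$ I would split the double integral into the regions $\{|u-v|<\rho\}$ and $\{|u-v|\ge\rho\}$: on the first, bound $|(w_n-w)(u)-(w_n-w)(v)|$ by $([w_n]_\alpha+[w]_\alpha)|u-v|^\alpha$, which yields a contribution $O(\rho^{p(\alpha-\theta)})$ uniformly in $n$; on the second, use $|(w_n-w)(u)-(w_n-w)(v)|\le 2\|w_n-w\|_{\mathcal W_T^0}$ together with $|u-v|^{-1-p\theta}\le\rho^{-1-p\theta}$, giving $O\bigl(\rho^{-1-p\theta}\|w_n-w\|_{\mathcal W_T^0}^p\bigr)$. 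Choosing $\rho$ small and then $n$ large makes both terms arbitrarily small. Consequently $\{w_n\}\subset\mathcal H_T^0$ is $\|\cdot\|_{T,p,\theta}$-Cauchy and converges uniformly to $w$, so the identification in \eqref{eq:inclusion} gives $w\in\mathcal W_{T,p,\theta}^0$.

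For (ii), the crucial structural fact I would establish is that $\mathcal W_{T,p,\theta}^0$ is reflexive. Consider the linear map
\[
    \Phi\colon\psi\longmapsto\Bigl((u,v)\mapsto\tfrac{\psi(u)-\psi(v)}{|u-v|^{\theta+1/p}}\Bigr).
\]
By the very definition of $\|\cdot\|_{T,p,\theta}$ it is an isometry of $(\mathcal H_T^0,\|\cdot\|_{T,p,\theta})$ into $L^p((0,T)^2;\mathbb R^d)$, so it extends to an isometric isomorphism of $\mathcal W_{T,p,\theta}^0$ onto the closure of $\Phi(\mathcal H_T^0)$, a closed subspace of $L^p((0,T)^2;\mathbb R^d)$. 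Since $1<p<\infty$, the latter space is reflexive, hence so is its closed subspace and therefore $\mathcal W_{T,p,\theta}^0$.

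With this in hand, (ii) follows from abstract functional analysis. The inclusion $\mathcal W_{T,p,\theta}^0\hookrightarrow\mathcal W_T^0$ is continuous by \eqref{eq:continuous} and has dense range, since it contains $\mathcal H_T^0$, which is dense in $\mathcal W_T^0$; dualizing recovers the continuous injection $\mathcal W_T^{0*}\hookrightarrow\mathcal W_{T,p,\theta}^{0*}$ used in the text. If $\mathcal W_T^{0*}$ were not dense in $\mathcal W_{T,p,\theta}^{0*}$, Hahn--Banach would supply a nonzero $\Lambda\in\mathcal W_{T,p,\theta}^{0**}$ vanishing on $\mathcal W_T^{0*}$; by reflexivity $\Lambda$ is given by some $w\in\mathcal W_{T,p,\theta}^0\subset\mathcal W_T^0$, whence $\langle\phi,w\rangle=0$ for all $\phi\in\mathcal W_T^{0*}$, and since $\mathcal W_T^{0*}$ separates the points of $\mathcal W_T^0$ this forces $w=0$, a contradiction. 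I expect the main obstacle to lie in part (ii): keeping the chain of dual-space identifications straight and, above all, pinning down reflexivity via the embedding $\Phi$; in part (i) the only delicate point is the uniform Hölder control of the interpolants feeding the near-diagonal estimate, and everything else is routine.
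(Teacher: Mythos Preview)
Your proof is correct, and in part (ii) it is actually more elementary than the paper's.

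For (i), the paper approximates $f\in C_T^\alpha$ by mollification: it extends $f$ by zero to $\mathbb R$, convolves with a smooth bump supported in $[0,\infty)$, and subtracts a linear term to restore $f_n(0)=f_n(T)=0$; it then verifies a uniform $\alpha$-H\"older bound $|f_n(t)-f_n(s)|\le 2C_f|t-s|^\alpha$ and invokes dominated convergence (the dominating function being $|t-s|^{(\alpha-\theta)p-1}$) to conclude that $\{f_n\}$ is $\|\cdot\|_{T,p,\theta}$-Cauchy. Your piecewise linear interpolants serve the same purpose: they lie in $\mathcal H_T^0$, satisfy a uniform H\"older bound $[w_n]_\alpha\le C[w]_\alpha$, and your near/far-diagonal split is just the $\varepsilon$--$\delta$ version of the paper's dominated-convergence step. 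Either approximation scheme works.

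For (ii), both arguments reduce to showing $\mathcal W_{T,p,\theta}^0$ is reflexive and then running the Hahn--Banach argument you wrote out (which is precisely what the paper dismisses as ``an elementary exercise of functional analysis''). The difference lies in how reflexivity is obtained. The paper embeds $\mathcal W_{T,p,\theta}^0$ as a closed subspace of the fractional Sobolev space $W_\theta^p(0,T)$, then invokes an extension operator $S_{0,T}:W_\theta^p(0,T)\to W_\theta^p(\mathbb R)$ and the reflexivity of $W_\theta^p(\mathbb R)$, both imported from Triebel's monograph. Your isometry $\Phi$ into $L^p((0,T)^2;\mathbb R^d)$ sidesteps all of this: since $\|\psi\|_{T,p,\theta}$ is by definition the $L^p$-norm of $\Phi(\psi)$, the completion $\mathcal W_{T,p,\theta}^0$ is isometric to a closed subspace of a reflexive space, and you are done. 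This buys you a self-contained argument with no external references.
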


\begin{proof}
(i) 
Let $f\in \mathcal{C}_T^\alpha$.
Extend $f$ to $\overline{f}\in C(\mathbb{R})$ by setting
$\overline{f}=0$ outside of $[0,T]$.
Defining
\[
    C_f=\sup_{s,t\in[0,T],s\ne t} 
        \frac{|f(t)-f(s)|}{|t-s|^\alpha}<\infty,
\]
we have
\begin{equation}\label{l.C^alpha.21}
    |\overline{f}(t)-\overline{f}(s)|
    \le C_f|t-s|^\alpha \quad
    \text{for any }s,t\in \mathbb{R}.
\end{equation}

Take a non-negative $\varphi\in C_0^\infty(\mathbb{R})$ such
that $\varphi=0$ on $(-\infty,0)$,
and $\int_{\mathbb{R}} \varphi(u)du=1$.
Set $\varphi_n(y)=n\varphi(ny)$ for $y\in \mathbb{R}$.
Put
\[
    f_n(t)=\int_{\mathbb{R}} \overline{f}(t-u)\varphi_n(u)du
      -\frac{t}{T} \int_{\mathbb{R}}\overline{f}(T-u)
           \varphi_n(u)du
    \quad\text{for }t\in[0,T].
\]
Then $f_n(0)=f_n(T)=0$ and 
$f_n\in C^\infty([0,T];\mathbb{R}^d)$.
In particular, $f_n\in \mathcal{H}_T^0$.

Since $\int_{\mathbb{R}}\varphi_n(u)du=1$ and
$\overline{f}(T)=0$, by \eqref{l.C^alpha.21},  
\begin{align*}
    |f_n(t)-f(t)|
    & =\biggl|\int_{\mathbb{R}}
       (\overline{f}(t-u)-\overline{f}(t))
                    \varphi_n(u)du
     -\frac{t}{T}\int_{\mathbb{R}} 
          (\overline{f}(T-u)-\overline{f}(T))
                        \varphi_n(u)du\biggr|
    \\
    & \le 
      2C_f n^{-\alpha} \int_{\mathbb{R}} |v|^\alpha
      \varphi(v)dv
     \quad\text{for }t\in[0,T].
\end{align*}
Thus 
\begin{equation}\label{l.C^alpha.22}
    \|f_n-f\|_{\mathcal{W}_T^0}\to 0
    \quad \text{as }n\to\infty.
\end{equation}

By \eqref{l.C^alpha.21}, 
$\sup_{t\in \mathbb{R}}|\overline{f}(t)|\le C_fT^\alpha$.
Due to this and \eqref{l.C^alpha.21} again, we have
\begin{align*}
    |f_n(t)-f_n(s)|
    & \le \int_{\mathbb{R}} |\overline{f}(t-u)
             -\overline{f}(s-u)| \varphi_n(u)du
        +\frac{|t-s|}{T} \int_{\mathbb{R}} 
            |\overline{f}(T-u)|\varphi_n(u)du
    \\
    & \le 2C_f|t-s|^\alpha
    \quad\text{for }t,s\in[0,T].
\end{align*}
This yields the domination
\[
    \frac{|(f_n(t)-f_m(t))-(f_n(s)-f_m(s))|^p}{
         |t-s|^{1+p\theta}}
    \le (4C_f)^p|t-s|^{(\alpha-\theta)p-1}
    \quad\text{for }t,s\in[0,T].
\]
By \eqref{l.C^alpha.22} and the dominated convergence
theorem, we obtain 
\[
    \lim_{n,m\to\infty}\|f_n-f_m\|_{T,p,\theta}=0.
\]
Thus $\{f_n\}_{n=1}^\infty\subset \mathcal{H}_T^0$ is a Cauchy
sequence with respect to $\|\cdot\|_{\mathcal{W}_{T,p,\theta}}$. 
Due to \eqref{l.C^alpha.22} and the inclusion 
\eqref{eq:inclusion}, 
we have $f\in \mathcal{W}_{T,p,\theta}^0$.
\\
(ii)
For $-\infty\le a<b\le\infty$, 
let $L^p(a,b)$ be the $L^p$-space with respect to the
Lebesgue measure on $(a,b)$.
Put
\[
    W_\theta^p(a,b)=\{f\in L^p(a,b)\mid
      \|f\|_{W_\theta^p(a,b)}<\infty\},
\]
where 
\[
    \|f\|_{W_\theta^p(a,b)}
    =\biggl(\int_{(a,b)}|f(u)|^p du\biggr)^{\frac1{p}}
     +\biggl(\int_{(a,b)^2}
         \frac{|f(u)-f(v)|^p}{|u-v|^{1+\theta p}} dudv
        \biggr)^{\frac1{p}}.
\]

Let $-\infty<a<b<\infty$ and 
$R_{a,b}:L^p(-\infty,\infty)\to L^p(a,b)$ be the restriction
mapping onto $(a,b)$. 
It is known (\cite{triebel}) that there exists a bounded
linear operator 
$S_{a,b}:W_\theta^p(a,b)\to W_\theta^p(-\infty,\infty)$
such that the composition $R_{a,b}\circ S_{a,b}$ is
the identity mapping of $W_\theta^p(a,b)$. 
Thus $W_\theta^p(a,b)$ can be thought of as a closed
subspace of $W_\theta^p(-\infty,\infty)$.
Since $W_\theta^p(-\infty,\infty)$ is reflexive
(\cite{triebel}), $W_\theta^p(a,b)$ is also reflexive.

By \eqref{eq:continuous}, $\|\cdot\|_{W_\theta^p(0,T)}$ and
$\|\cdot\|_{T,p,\theta}$ are equivalent on 
$\mathcal{W}_{T,p,\theta}^0$.
Hence $\mathcal{W}_{T,p,\theta}^0$ is a closed subspace of
$W_\theta^p(0,T)$, and hence is reflexive.

Since $\mathcal{W}_{T,p,\theta}^0$ is reflexive and imbedded
continuously in $\mathcal{W}_T^0$ by \eqref{eq:continuous},
it is an elementary exercise of functional analysis to show
$\mathcal{W}_T^{0*}$ is dense in
$\mathcal{W}_{T,p,\theta}^{0*}$.
\end{proof}

As is well known, $\mu_T^0(C_T^\alpha)=1$.
Hence, by the above lemma, we obtain the probability measure
$\mu_{T,p,\theta}^0$ on $\mathcal{W}_{T,p,\theta}$ by restricting
$\mu_T^0$ to $\mathcal{W}_{T,p,\theta}^0$. 

\begin{proof}[Proof of Proposition~\ref{prop.exp.int}]
The proof completes once we have shown
$(\mathcal{W}_{T,p,\theta}^0,\mathcal{H}_T^0,\mu_{T,p,\theta}^0)$
is an abstract Wiener space;
(i) $\mathcal{H}_T^0$ is imbedded in
$\mathcal{W}_{T,p,\theta}^0$ densely and continuously, and 
(ii)~it holds
\begin{equation}\label{p.exp.int.21}
    \int_{\mathcal{W}_{T,p,\theta}^0} e^{\sqrt{-1}\ell}
    d\mu_{T,p,\theta}^0 
    =\exp\biggl(-\frac{\|\ell\|_{\mathcal{H}_T^0}^2}2\biggr)
    \quad\text{for any }\ell\in \mathcal{W}_{T,p,\theta}^{0*},
\end{equation}
where we have identified $\mathcal{H}_T^0$ with its duals
space and thought of $\mathcal{W}_{T,p,\theta}^{0*}$ as a
subspace of $\mathcal{H}_T^0$. 
In fact, the Fernique theorem (\cite{ledoux}) applied to
this abstract Wiener space yields the existence of
$\delta\in(0,\infty)$ such that
\[
    \int_{\mathcal{W}_{T,p,\theta}^0} \exp(\delta
     \|\cdot\|_{T,p,\theta}^2) d\mu_{T,p,\theta}^0<\infty.
\]
By the definition of $\mu_{T,p,\theta}^0$, this means 
$\exp(\delta\|\cdot\|_{T,p,\theta}^2)
 \in L^1(\mu_T^0)$.

The denseness and the continuity of the imbedding of
$\mathcal{H}_T^0$ into $\mathcal{W}_{T,p,\theta}^0$ follow
immediately from the definition of
$\mathcal{W}_{T,p,\theta}^0$ and \eqref{p.exp.int.20}.
To complete the proof, it remains to show
\eqref{p.exp.int.21}.
To do this, given $\ell\in \mathcal{W}_{T,p,\theta}^{0*}$,
apply Lemma~\ref{lem.C^alpha}(ii) to take a sequence 
$\{\ell_n\}_{n=1}^\infty \subset \mathcal{W}_T^{0*}$
converging to $\ell$ in $\mathcal{W}_{T,p,\theta}^{0*}$.
Since $(\mathcal{W}_T^0,\mathcal{H}_T^0,\mu_T^0)$ is an
abstract Wiener space, we have 
\[
    \int_{\mathcal{W}_{T,p,\theta}^0} e^{\sqrt{-1}\ell_n}
    d\mu_{T,p,\theta}^0
    =\int_{\mathcal{W}_T^0} e^{\sqrt{-1}\ell_n} d\mu_T^0
    =\exp\biggl(-\frac{\|\ell_n\|_{\mathcal{H}_T^0}^2}2
                \biggr)
    \quad\text{for }n\in \mathbb{N}.
\]
By \eqref{p.exp.int.20}, 
$\mathcal{W}_{T,p,\theta}^{0*}$ is imbedded in
$\mathcal{H}_T^0$ continuously.
Then, letting $n\to\infty$ in the above identity, we arrive
at \eqref{p.exp.int.21}.
\end{proof}

\section{Transition density function and on-diagonal
  asym\-ptotics}\label{sec:on-diagonal}

We continue to use the same notation introduced in the
preceding sections.
Our first aim of this section is to give an explicit
expression of the transition density function
$p_T((x,y),(\xi,\eta))$ as follows.

\begin{theorem}\label{thm.main}
Let $x,\xi\in \mathbb{R}^d$.
Define the random variable $v_{T,x,\xi}$ on
$\mathcal{W}_T^0$ by 
\[
    v_{T,x,\xi}=\int_0^T
    |(\beta+\ell^{T,x,\xi})(t)|^{2\gamma} dt.
\]
{\rm(i)}
$v_{T,x,\xi}^{-1}\in L^{\infty-}(\mu_T^0)$, and it holds
\begin{equation}\label{t.main.1}
    p_T((x,y),(\xi,\eta))
    = \frac1{\sqrt{2\pi}^{d+d^\prime}}
      \frac1{\sqrt{T}^d} 
      \exp\Bigl(-\frac{|\xi-x|^2}{2T}\Bigr)
      \int_{\mathcal{W}_T^0}  
        v_{T,x,\xi}^{-\frac{d^\prime}2} 
        \exp\biggl(-\frac{|\eta-y|^2}{2 v_{T,x,\xi}}
          \biggr) d\mu_T^0.
\end{equation}
{\rm (ii)}
$p_T$ is continuous on 
$(\mathbb{R}^d\times \mathbb{R}^{d^\prime})^2$.

\end{theorem}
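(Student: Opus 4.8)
The plan is to realize the diffusion concretely and then compute the law of $Z^{(x,y)}(T)$ by conditioning. Since the two groups of vector fields decouple, the stochastic differential equation associated with $\tfrac12\Delta_{(\gamma)}$ can be solved with no fixed‑point argument: on $\mathcal{W}_T\times\widehat{\mathcal{W}}_T$ set
\[
   X(t)=x+b(t),\qquad Y(t)=y+\int_0^t|x+b(s)|^\gamma\,d\widehat b(s),
\]
and $Z^{(x,y)}=(X,Y)$ is the diffusion generated by $\tfrac12\Delta_{(\gamma)}$ started at $(x,y)$. Conditioning on $\mathcal{F}_T$ and using the computation in the proof of Lemma~\ref{lem.density.phi} with $N=d^\prime$ and $\phi(t)$ the diagonal $d^\prime\times d^\prime$ matrix all of whose diagonal entries equal $|x+b(t)|^\gamma$, the random vector $Y(T)-y$ is, given $\mathcal{F}_T$, centred Gaussian with covariance $w_T\,\mathrm{Id}$, where $w_T:=\int_0^T|x+b(t)|^{2\gamma}\,dt>0$ $\mu_T$‑a.s.; hence
\[
   \mathbf E\bigl[f(Z^{(x,y)}(T))\bigr]
   =\int_{\mathcal{W}_T}\!\int_{\mathbb R^{d^\prime}}
     f\bigl(x+b(T),\,y+z\bigr)\,
     \frac{\exp\bigl(-|z|^2/(2w_T)\bigr)}{\sqrt{2\pi}^{\,d^\prime}\,w_T^{\,d^\prime/2}}
     \,dz\,d\mu_T .
\]
I would then decompose $\mu_T$ as the image of $\mu_T^0\otimes N(0,T\,\mathrm{Id})$ under $(\beta,\zeta)\mapsto\beta+\tfrac{\cdot}{T}\zeta$ (Brownian motion $=$ an independent Brownian bridge plus the linear interpolation of its endpoint). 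Under this identification $b(T)=\zeta$ and $x+b(t)=(\beta+\ell^{T,x,x+\zeta})(t)$, so $w_T=v_{T,x,x+\zeta}$; after the substitutions $\xi=x+\zeta$, $\eta=y+z$ and an application of Fubini's theorem (the Gaussian factor in $z$ integrates to $1$, so the integrand has finite total mass), the right‑hand side becomes $\int\!\int f(\xi,\eta)\,p_T((x,y),(\xi,\eta))\,d\xi\,d\eta$ with $p_T$ given by exactly \eqref{t.main.1}. Thus, apart from these standard manipulations, part~(i) — together with the assertion that the expression in \eqref{t.main.1} is a genuine finite function — reduces to the bound $v_{T,x,\xi}^{-1}\in L^{\infty-}(\mu_T^0)$, which I also need, \emph{uniformly for $(x,\xi)$ in compact sets}, for part~(ii).

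I expect this negative‑moment estimate to be the main obstacle. Fix an admissible pair $(p,\theta)$, write $w=\beta+\ell^{T,x,\xi}$, $K=\|w\|_{T,p,\theta}$, $\mu=\theta-\tfrac1p\in(0,\tfrac12)$, $a=1/\mu$, and let $M=\sup_{[0,T]}|w|$ be attained at $t_0$. By \eqref{eq:garsia0}, $|w|\ge M/2$ on $\{\,|t-t_0|\le(M/(cK))^{1/\mu}\,\}\cap[0,T]$ for a constant $c=c(p,\theta)$, an interval of measure at least $\min\{(M/(cK))^{1/\mu},T\}$; inserting this into $v_{T,x,\xi}=\int_0^T|w|^{2\gamma}\,dt<\varepsilon$ gives, for $\varepsilon\le1$,
\[
   M\ \le\ C\,\varepsilon^{1/(2\gamma+a)}\,(1+K)^{a/(2\gamma+a)},
\]
with $C$ depending only on $d,\gamma,p,\theta,T$. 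Now fix $s\in(0,1/a)$ and interior times $0<t_1<\dots<t_m<T$, and set $\kappa=(1-sa)/(2\gamma+a)>0$. On $\{v_{T,x,\xi}<\varepsilon\}\cap\{K\le\varepsilon^{-s}\}$ (with $\varepsilon\le1$) this forces $|w(t_k)|\le M\le C^\prime\varepsilon^{\kappa}$ for every $k$; since $(\beta(t_1),\dots,\beta(t_m))$ is a non‑degenerate Gaussian vector whose density is bounded by a constant \emph{not depending on $(x,\xi)$} (the deterministic path $\ell^{T,x,\xi}$ only shifts its mean), that event has probability at most $C_m\varepsilon^{\kappa d m}$, uniformly in $(x,\xi)$. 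On the complementary event, Corollary~\ref{cor.exp.int} together with Chebyshev's inequality bounds the probability by $C_R\exp(-\tfrac\delta2\varepsilon^{-2s})$ uniformly for $|\xi-x|\le R$. Hence
\[
   \sup_{|\xi-x|\le R}\ \mu_T^0\bigl(v_{T,x,\xi}<\varepsilon\bigr)
   \ \le\ C_R\exp\!\bigl(-\tfrac\delta2\varepsilon^{-2s}\bigr)+C_m\,\varepsilon^{\kappa d m}
   \qquad(0<\varepsilon\le1),
\]
and, taking $m$ arbitrarily large, $\sup_{|\xi-x|\le R}\mathbf E^{\mu_T^0}\bigl[v_{T,x,\xi}^{-q}\bigr]<\infty$ for every $q\in(0,\infty)$ and $R\in(0,\infty)$. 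The delicate point is to get uniformity across the two regimes — $(x,\xi)$ near $(0,0)$, where the small‑ball mechanism is essential, and $(x,\xi)$ bounded away from the diagonal through $0$ — which this argument handles precisely because the Gaussian‑density bound is insensitive to the shift $\ell^{T,x,\xi}$.

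Finally, part~(ii). The prefactor $(2\pi)^{-(d+d^\prime)/2}T^{-d/2}\exp(-|\xi-x|^2/(2T))$ in \eqref{t.main.1} is continuous, so it suffices to treat
\[
   J(x,\xi,\eta-y):=\int_{\mathcal{W}_T^0}v_{T,x,\xi}^{-d^\prime/2}\exp\!\Bigl(-\frac{|\eta-y|^2}{2v_{T,x,\xi}}\Bigr)\,d\mu_T^0 .
\]
Given $(x_n,y_n,\xi_n,\eta_n)\to(x,y,\xi,\eta)$, the linear paths $\ell^{T,x_n,\xi_n}$ converge uniformly to $\ell^{T,x,\xi}$, hence $v_{T,x_n,\xi_n}\to v_{T,x,\xi}$ pointwise on $\mathcal{W}_T^0$; since $v_{T,x,\xi}>0$ a.s.\ (a consequence of the estimate just established) the integrands converge $\mu_T^0$‑a.s., they are dominated by $v_{T,x_n,\xi_n}^{-d^\prime/2}$, and $\{v_{T,x_n,\xi_n}^{-d^\prime/2}\}_n$ is bounded in $L^2(\mu_T^0)$ by the locally uniform bound above (take $q=d^\prime$), hence uniformly integrable. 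Vitali's convergence theorem gives $J(x_n,\xi_n,\eta_n-y_n)\to J(x,\xi,\eta-y)$, and the continuity of $p_T$ on $(\mathbb R^d\times\mathbb R^{d^\prime})^2$ follows.
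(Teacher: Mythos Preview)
Your derivation of the formula \eqref{t.main.1} and the continuity argument in part~(ii) are essentially the same as the paper's: the paper also realizes $Z^{(x,y)}$ via the explicit SDE solution \eqref{eq:sol}, conditions on the first factor, and passes to the bridge (it does so via an SDE realization of the bridge in Lemma~\ref{lem.density.Y} rather than the endpoint decomposition you use, but this is cosmetic), and then deduces continuity from the locally uniform $L^p$ bound on $v_{T,x,\xi}^{-1}$ exactly as you do.

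The substantive difference is in how you obtain that $L^p$ bound. The paper (Lemma~\ref{lem.non-deg}) splits according to the event $\{M_{T,\xi-x}\ge 4|x|\}$: on this set it uses the same Garsia--Rodemich--Rumsey argument near the maximum that you use, but then controls $M_{T,\xi-x}^{-1}$ via the \emph{explicit} tail estimate of Lemma~\ref{lem.max.prob} (coming from the known distribution of the supremum of a one-dimensional bridge); on the complement it uses GRR near $t=0$ together with the small-ball bound \eqref{l.max.prob.1} to absorb the negative powers of $|x|$. Your route is different and in some respects more elementary: you never invoke the explicit max distribution, instead bounding $\mu_T^0(M\text{ small})$ by evaluating the bridge at finitely many fixed interior times and using that the resulting Gaussian density is bounded uniformly in the shift $\ell^{T,x,\xi}$. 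This buys you a cleaner, single-regime argument that is manifestly uniform in $(x,\xi)$ and would work for any process with a H\"older-type modulus and non-degenerate finite-dimensional marginals. The paper's approach, by contrast, yields sharper quantitative control on $M_{T,\zeta}^{-1}$ (Lemma~\ref{lem.max.prob}), which it reuses later in the off-diagonal asymptotics of Section~\ref{sec:off-diagonal}.
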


To prove the theorem, we prepare several lemmas.

\begin{lemma}\label{lem.max.prob}
For $\zeta\in \mathbb{R}^{d}$, put
$M_{T,\zeta}=\max_{t\in[0,T]} |(\beta+\ell^{T,0,\zeta})(t)|$.
Then it holds
\begin{equation}\label{l.max.prob.1}
    \mu_T^0(M_{T,\zeta}\le a)
    \le \exp\Bigl(\frac{|\zeta|^2}{2T}\Bigr)
         \frac{\sqrt{2\pi T}^d}{a^d}
        \frac{\exp\Bigl(-\dfrac{d\pi^2 T}{8a^2}\Bigr)}{
          \Bigl(1-\exp\Bigl(-\dfrac{\pi^2 T}{8a^2}\Bigr)
        \Bigr)^d}
   \quad\text{for any }a>0.
\end{equation}
In particular,
\begin{equation}\label{l.max.prob.2}
    \sup_{|\zeta|\le R} \|M_{T,\zeta}^{-1}\|_{L^p(\mu_T^0)}
    <\infty
    \quad\text{for any $R>0$ and }p\in(1,\infty).
\end{equation}
\end{lemma}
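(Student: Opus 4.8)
The plan is to first prove the pointwise bound \eqref{l.max.prob.1} and then deduce the uniform integrability estimate \eqref{l.max.prob.2} from it. For the first part, write $\beta+\ell^{T,0,\zeta}$ componentwise and reduce to a bound on the pinned Brownian bridge. The event $\{M_{T,\zeta}\le a\}$ says the bridge $\beta$ stays inside the ball of radius $a$ around the moving center $-\ell^{T,0,\zeta}(t)$; since $|\ell^{T,0,\zeta}(t)|\le|\zeta|$, this is contained in the event that $\beta+\ell^{T,0,\zeta}$ stays in the centered ball of radius $a$, but to get a clean product bound I would instead use a Cameron–Martin / Girsanov change of measure to remove the drift $\ell^{T,0,\zeta}$: under $\mu_T^0$, the law of $\beta+\ell^{T,0,\zeta}$ is absolutely continuous with respect to the law of $\beta$ with a density whose exponent is controlled, producing the factor $\exp(|\zeta|^2/2T)$ (this is exactly the $L^\infty$ bound on the bridge Cameron–Martin density, coming from $\|\ell^{T,0,\zeta}\|_{\mathcal H_T^0}^2 = |\zeta|^2/T$). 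After that change of measure it suffices to bound $\mu_T^0\bigl(\max_{t}|\beta(t)|\le a\bigr)$ for the standard $d$-dimensional bridge.

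For that centered estimate I would bound $\{\max_t|\beta(t)|\le a\}$ by $\prod_{i=1}^d\{\max_t|\beta^i(t)|\le a\}$ — valid because the ball of radius $a$ is contained in the cube of side $2a$ — and then reduce to the one-dimensional pinned bridge by independence of coordinates. For the one-dimensional bridge on $[0,T]$, the probability of staying in $(-a,a)$ is classically given by an eigenfunction expansion / reflection series; the leading term is proportional to $\sqrt{T}\,a^{-1}\exp(-\pi^2 T/8a^2)$ (ground state of the Dirichlet Laplacian on $(-a,a)$ rescaled), and summing the full series gives the geometric-type correction $\bigl(1-\exp(-\pi^2 T/8a^2)\bigr)^{-1}$. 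Raising to the $d$-th power and absorbing the normalization $\sqrt{2\pi T}$ per coordinate yields \eqref{l.max.prob.1}; the constants there are consistent with this computation, so the bookkeeping is routine once the one-dimensional small-ball formula is in hand.

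To get \eqref{l.max.prob.2}, observe that $M_{T,\zeta}^{-p}$ is integrated against $\mu_T^0$ as $\int_0^\infty p\,s^{p-1}\mu_T^0(M_{T,\zeta}^{-1}>s)\,ds = \int_0^\infty p\,s^{p-1}\mu_T^0(M_{T,\zeta}<1/s)\,ds$, and substitute $a=1/s$. The bound \eqref{l.max.prob.1} shows $\mu_T^0(M_{T,\zeta}\le a)$ decays like $\exp(-c/a^2)$ as $a\downarrow0$ uniformly for $|\zeta|\le R$ (the factor $\exp(|\zeta|^2/2T)\le\exp(R^2/2T)$ is a fixed constant, and $1-\exp(-\pi^2T/8a^2)\ge 1-\exp(-\pi^2T/8)$ stays bounded below once $a\le 1$), so the integrand is integrable near $s=\infty$; near $s=0$ it is trivially fine since $\mu_T^0(M_{T,\zeta}<1/s)\le1$. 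This gives a bound on $\|M_{T,\zeta}^{-1}\|_{L^p(\mu_T^0)}$ uniform in $|\zeta|\le R$.

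The main obstacle is the first step: producing a \emph{clean, explicitly constant} upper bound for the small-ball probability of the $d$-dimensional Brownian bridge with a drift. The reduction to one dimension via the inclusion ball $\subset$ cube is lossy but harmless; the delicate point is getting the one-dimensional bridge's exit-time tail with the exact constants $\pi^2/8$ and the geometric correction, and correctly accounting for the drift removal. I expect the drift part to go through smoothly by Cameron–Martin on $\mathcal H_T^0$ (using $\|\ell^{T,0,\zeta}\|_{\mathcal H_T^0}^2=|\zeta|^2/T$ and the fact that the martingale term has mean zero), so the real work is the one-dimensional Dirichlet heat-kernel / theta-function estimate, which is classical.
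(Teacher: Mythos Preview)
Your overall architecture is right, and the derivation of \eqref{l.max.prob.2} from \eqref{l.max.prob.1} via the layer-cake formula is fine. The gap is in the drift-removal step.

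You claim that under $\mu_T^0$ the law of $\beta+\ell^{T,0,\zeta}$ is absolutely continuous with respect to the law of $\beta$, with Cameron--Martin density controlled by $\|\ell^{T,0,\zeta}\|_{\mathcal H_T^0}^2=|\zeta|^2/T$. But $\ell^{T,0,\zeta}(T)=\zeta\ne0$, so $\ell^{T,0,\zeta}\notin\mathcal H_T^0$; the two path measures are supported on paths with different endpoints ($0$ versus $\zeta$) and are therefore mutually singular, not equivalent. There is no Cameron--Martin density here. The quantity $|\zeta|^2/T$ that you compute is the $\mathcal H_T$-norm (Cameron--Martin space of the \emph{unpinned} Wiener measure), and that shift would change the terminal law of $b(T)$, which is exactly what the pinned measure forbids.

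The fix --- and this is what the paper does --- is to go componentwise \emph{before} dealing with the drift. From $\{M_{T,\zeta}\le a\}\subset\bigcap_i\{\max_t|(\beta+\ell^{T,0,\zeta})^i(t)|\le a\}$ and independence of the $\beta^i$ one reduces to a one-dimensional bridge from $0$ to $\zeta^i$. For that, the identity
\[
   \mu_T^0\Bigl(\max_{t}|(\beta+\ell^{T,0,\zeta})^i(t)|\le a\Bigr)
   =\frac{p_T^{[-a,a]}(0,\zeta^i)}{p_T(0,\zeta^i)}
\]
immediately produces the factor $\exp((\zeta^i)^2/2T)$ from $1/p_T(0,\zeta^i)=\sqrt{2\pi T}\,e^{(\zeta^i)^2/2T}$, with no absolute continuity argument needed; bounding the Dirichlet heat kernel $p_T^{[-a,a]}(0,\zeta^i)$ by its eigenfunction expansion (using $|\sin|\le1$ and $\sum_{n\ge1}e^{-n^2c}\le e^{-c}/(1-e^{-c})$) gives the remaining factors. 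The paper simply quotes this one-dimensional inequality from \cite{st-grushin} and multiplies over $i$. Once you reorder your argument this way, the rest of your plan goes through unchanged.
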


\begin{proof}
Let $(\beta+\ell^{T,0,\zeta})^i(t)$ be the $i$th
component of $(\beta+\ell^{T,0,\zeta})(t)$.
It was shown in \cite[p.429]{st-grushin} that
\begin{equation}\label{l.max.prob.21}
    \mu_T^0\Bigl(\max_{t\in[0,T]} 
      |(\beta+\ell^{T,0,\zeta})^i(t)|\le a\Bigr)
    \le \exp\Bigl(\frac{(\zeta^i)^2}{2T}\Bigr)
        \frac{\sqrt{2\pi T}}{a}
        \frac{\exp\Bigl(-\dfrac{\pi^2 T}{8a^2}\Bigr)}{
          1-\exp\Bigl(-\dfrac{\pi^2 T}{8a^2}\Bigr)},
\end{equation}
where $\zeta=(\zeta^1,\dots,\zeta^d)$.
The independence of components of 
$(\beta+\ell^{T,0,\zeta})(t)$ implies
\[
    \mu_T^0(M_{T,\zeta}\le a)
    \le \prod_{i=1}^d
    \mu_T^0\Bigl(\max_{t\in[0,T]} 
     |(\beta+\ell^{T,0,\zeta})^i(t)|\le a\Bigr).
\]
Plugging \eqref{l.max.prob.21} into this, we obtain
\eqref{l.max.prob.1}.
\end{proof}

For $x,\xi\in \mathbb{R}^d$, 
define the random variable $B_{T,x,\xi}$ on
$\mathcal{W}_T^0$ by 
\[
    B_{T,x,\xi}=\|\beta+\ell^{T,x,\xi}\|_{T,12,\frac14}.
\]
It holds $B_{T,x,\xi}=B_{T,0,\xi-x}$.
By Corollary~\ref{cor.exp.int}, there exists
$\delta\in(0,\infty)$ such that
\begin{equation}\label{eq:exp.int}
    \sup_{|\xi-x|\le R}
    \int_{\mathcal{W}_T^0} \biggl\{\exp(\delta B_{T,0,0}^2)
          +\exp\biggl(\frac{\delta}2 B_{T,x,\xi}^2\biggr)  
           \biggr\}d\mu_T^0<\infty
    \quad \text{for any }R>0.
\end{equation}
In particular, $B_{T,x,\xi}<\infty$ $\mu_T^0$-a.s.
Moreover, by virtue of \eqref{eq:garsia0}, 
\begin{equation}\label{eq:garsia}
    |(\beta+\ell^{T,x,\xi})(t)
          -(\beta+\ell^{T,x,\xi})(s)|
    \le 96 B_{T,0,\xi-x}|t-s|^{\frac16}
    \quad \text{for }t,s\in[0,T],
    \quad\text{$\mu_T^0$-a.s.}
\end{equation}

\begin{lemma}\label{lem.non-deg}
It holds
\begin{equation}\label{l.non-deg.1}
    \sup_{|x|,|\xi|\le R}
      \|v_{T,x,\xi}^{-1}\|_{L^p(\mu_T^0)}<\infty
    \quad\text{for any $R>0$ and }p\in(1,\infty).
\end{equation}
In particular,
$v_{T,x,\xi}^{-1}\in L^{\infty-}(\mu_T^0)$ for any 
$x,\xi\in \mathbb{R}^d$.
\end{lemma}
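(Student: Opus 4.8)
\textbf{Proof proposal for Lemma~\ref{lem.non-deg}.}
The plan is to bound $v_{T,x,\xi}^{-1}$ from above by a negative power of the minimum modulus $M_{T,\xi-x}=\max_{t\in[0,T]}|(\beta+\ell^{T,0,\xi-x})(t)|$ appearing in Lemma~\ref{lem.max.prob}, or rather by a quantity controlled by both $M$ and the H\"older constant $B_{T,0,\xi-x}$, and then to invoke the integrability estimates \eqref{l.max.prob.2} and \eqref{eq:exp.int}. The subtlety is that $v_{T,x,\xi}=\int_0^T |(\beta+\ell^{T,x,\xi})(t)|^{2\gamma}\,dt$ can be small not because the path stays near the origin but because the path's \emph{modulus} dips close to $0$ on part of $[0,T]$; since the integrand vanishes to order $2\gamma$ there, small $\gamma$ makes the degeneracy worse, so the estimate must be quantitative in $\gamma$.

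First I would reduce to $x=0$ using $\beta+\ell^{T,x,\xi}=\beta+\ell^{T,0,\xi-x}$, so write $u(t)=(\beta+\ell^{T,0,\xi-x})(t)$ and $m=|u(t_0)|$ where $t_0$ realizes the maximum, i.e. $m=M_{T,0,\xi-x}$. Next I would use the H\"older continuity \eqref{eq:garsia}: $\bigl||u(t)|-|u(t_0)|\bigr|\le|u(t)-u(t_0)|\le 96\,B_{T,0,\xi-x}|t-t_0|^{1/6}$, so on the interval $I=\{t\in[0,T]:96\,B_{T,0,\xi-x}|t-t_0|^{1/6}\le m/2\}$ we have $|u(t)|\ge m/2$. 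The length of $I$ is (up to constants and truncation at $[0,T]$) of order $\min\bigl(T,(m/B_{T,0,\xi-x})^{6}\bigr)$ — more precisely at least $c\,\min\bigl(T, m^6 B_{T,0,\xi-x}^{-6}\bigr)$ for an explicit constant $c$. Hence
\[
    v_{T,x,\xi}\ \ge\ \int_I |u(t)|^{2\gamma}\,dt\ \ge\ \Bigl(\frac{m}{2}\Bigr)^{2\gamma} |I|\ \ge\ c\Bigl(\frac{m}{2}\Bigr)^{2\gamma}\min\bigl(T,\ m^6 B_{T,0,\xi-x}^{-6}\bigr),
\]
which gives
\[
    v_{T,x,\xi}^{-1}\ \le\ C\,\Bigl(m^{-2\gamma}T^{-1}+m^{-2\gamma-6}B_{T,0,\xi-x}^{6}\Bigr)
\]
for a constant $C$ depending only on $\gamma$ (and absorbing the $2^{2\gamma}$).

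Finally I would take $L^p$ norms of the right-hand side. The term $C T^{-1} m^{-2\gamma}=CT^{-1}M_{T,0,\xi-x}^{-2\gamma}$ is handled directly by \eqref{l.max.prob.2}, whose bound is uniform over $|\xi-x|\le 2R$. For the mixed term $m^{-2\gamma-6}B_{T,0,\xi-x}^{6}$ I would apply the Cauchy--Schwarz (or H\"older) inequality to split it into $\|M_{T,0,\xi-x}^{-4\gamma-12}\|_{L^1}^{1/2}\,\|B_{T,0,\xi-x}^{12}\|_{L^1}^{1/2}$; the first factor is again finite and uniformly bounded in $|\xi-x|\le 2R$ by \eqref{l.max.prob.2} (with exponent $2(2\gamma+6)p$ in place of $p$), and the second factor is finite and uniformly bounded by \eqref{eq:exp.int}, since $\exp(\tfrac{\delta}{2}B_{T,x,\xi}^2)$ being integrable forces all moments of $B_{T,x,\xi}$ to be finite and uniformly bounded for $|\xi-x|\le R$. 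Adjusting the radius ($|x|,|\xi|\le R$ implies $|\xi-x|\le 2R$) gives \eqref{l.non-deg.1}, and the $L^{\infty-}$ statement follows. The main obstacle is purely the quantitative lower bound on $v_{T,x,\xi}$: one must extract an interval of definite length on which $|u|$ is bounded below, and this is exactly where the competition between the maximum $m$ and the oscillation constant $B$ enters; once the two-term bound on $v_{T,x,\xi}^{-1}$ is in hand, the rest is a routine application of the already-established integrability estimates.
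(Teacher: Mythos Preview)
Your reduction step contains a genuine error. The identity $\beta+\ell^{T,x,\xi}=\beta+\ell^{T,0,\xi-x}$ is false: since $\ell^{T,x,\xi}(t)=x+\tfrac{t}{T}(\xi-x)$, one has
\[
    \beta+\ell^{T,x,\xi}=x+\bigl(\beta+\ell^{T,0,\xi-x}\bigr),
\]
so with your $u=\beta+\ell^{T,0,\xi-x}$ the correct expression is $v_{T,x,\xi}=\int_0^T|x+u(t)|^{2\gamma}\,dt$, not $\int_0^T|u(t)|^{2\gamma}\,dt$. On your interval $I$ you only know $|u(t)|\ge m/2$, which gives $|x+u(t)|\ge m/2-|x|$; this is useless when $m$ is comparable to (or smaller than) $|x|$, and in fact $x+u$ can vanish on $I$ while $|u|$ stays large. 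Thus the inequality $v_{T,x,\xi}\ge(m/2)^{2\gamma}|I|$ is not justified, and the whole chain collapses at that point.

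The paper repairs exactly this: it splits according to the event $A_{T,x,\xi}=\{M_{T,\xi-x}\ge4|x|\}$. On $A_{T,x,\xi}$ your argument goes through after subtracting $|x|$, yielding $|x+u(t)|\ge M_{T,\xi-x}/4$ on the good interval and hence the two-term bound you wrote. On $A_{T,x,\xi}^c$ (empty if $x=0$) one instead exploits that the path starts at $x\ne0$: by \eqref{eq:garsia} with $s=0$, $|x+u(t)|\ge|x|/2$ for $t$ in an interval of length $\gtrsim(|x|/B_{T,0,\xi-x})^6\wedge T$, giving a lower bound on $v_{T,x,\xi}$ in terms of $|x|$; the resulting negative power of $|x|$ is then absorbed by the small-ball estimate $\mu_T^0(M_{T,\xi-x}<4|x|)\lesssim|x|^{-d}\exp(-c/|x|^2)$ from Lemma~\ref{lem.max.prob}. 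A shorter alternative, closer in spirit to your write-up, is to run your argument with the \emph{correct} path $\tilde u=\beta+\ell^{T,x,\xi}$ and its maximum $\tilde m=\max_t|\tilde u(t)|$: since $\tilde m\ge|x|$ (take $t=0$) and $\tilde m\ge M_{T,\xi-x}-|x|$ (take $t=\sigma$), one has $\tilde m\ge M_{T,\xi-x}/2$ in every case, and the H\"older constant is unchanged because $B_{T,x,\xi}=B_{T,0,\xi-x}$. Either way, an extra step beyond your ``reduce to $x=0$'' is required.
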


\begin{proof}
Set
$A_{T,x,\xi}=\{M_{T,\xi-x}\ge 4|x|\}$ and 
take a random variable $\sigma\in[0,T]$ such that 
$M_{T,\xi-x}=|(\beta+\ell^{T,0,\xi-x})(\sigma)|$.

On $A_{T,x,\xi}$,
by \eqref{eq:garsia} with $s=\sigma$, 
if $t\in[0,T]$ satisfies
$96B_{T,0,\xi-x}|\sigma-t|^{\frac16}\le\frac{M_{T,\xi-x}}2$, then
\[
    |(\beta+\ell^{T,x,\xi})(t)|
    \ge \frac{M_{T,\xi-x}}2-|x|
    \ge \frac{M_{T,\xi-x}}4.
\]
This implies
\begin{equation}\label{l.non-deg.30}
    v_{T,x,\xi}\ge \Bigl(\frac{M_{T,\xi-x}}4\Bigr)^{2\gamma}
      \biggl( \frac{M_{T,\xi-x}^6}{192^6 B_{T,0,\xi-x}^6}
      \wedge\frac{T}2\biggr)
    \quad\text{on }A_{T,x,\xi}.
\end{equation}
Hence we have
\[
    v_{T,x,\xi}^{-1} \boldsymbol{1}_{A_{T,x,\xi}}
    \le 4^{2\gamma} M_{T,\xi-x}^{-2\gamma}
      \biggl( \frac{192^6 B_{T,0,\xi-x}^6}{M_{T,\xi-x}^6}
      +\frac{2}{T}\biggr)\boldsymbol{1}_{A_{T,x,\xi}}.
\]
By \eqref{l.max.prob.2} and \eqref{eq:exp.int}, we obtain
\begin{equation}\label{l.non-deg.31}
    \sup_{|\xi-x|\le R} \int_{A_{T,x,\xi}} v_{T,x,\xi}^{-p}
     d\mu_T^0 <\infty
     \quad\text{for any $R>0$ and }p\in(1,\infty).
\end{equation}

Let $A_{T,x,\xi}^c$ be the complement set of $A_{T,x,\xi}$.
If $x=0$, then $A_{T,x,\xi}^c=\emptyset$, and hence
\[
    \int_{A_{T,x,\xi}^c} v_{T,x,\xi}^{-p} d\mu_T^0 =0.
\]
Suppose $x\ne0$. 
By \eqref{eq:garsia} with $s=0$, 
if $t\in[0,T]$ satisfies 
$96 B_{T,0,\xi-x}t^{\frac16}\le \frac{|x|}2$, then
\[
    |(\beta+\ell^{T,x,\xi})(t)|
    \ge \frac{|x|}2.
\]
Thus
\begin{equation}\label{l.non-deg.32}
    v_{T,x,\xi}\ge \Bigl(\frac{|x|}2\Bigr)^{2\gamma}
    \biggl(\frac{|x|^6}{192^6 B_{T,0,\xi-x}^6}\wedge T\biggr).
    \end{equation}
Hence, by \eqref{l.max.prob.1} with $a=4|x|$, we have
\begin{align*}
    \int_{A_{T,x,\xi}^c} v_{T,x,\xi}^{-p} d\mu_T^0
    \le & \frac{2^{2\gamma p}}{|x|^{(2\gamma+6) p}}
     \biggl\| 192^6 B_{T,0,\xi-x}^6+\frac{|x|^6}{T}
        \biggr\|_{L^{2p}(\mu_T^0)}^p
    \\
    & \qquad
      \times
      \exp\Bigl(\frac{|\xi-x|^2}{4T}\Bigr)
      \frac{\sqrt{2\pi T}^{\frac{d}2}}{2^d|x|^{\frac{d}2}}
      \frac{\exp\biggl(-\dfrac{d\pi^2T}{256|x|^2}\biggr)}{
         \biggl(1-\exp\biggl(-\dfrac{\pi^2}{128|x|^2}
             \biggr)\biggr)^{\frac{d}2}}.
\end{align*}
We now have
\[
    \sup_{|x|,|\xi|\le R} \int_{A_{T,x,\xi}^c}
        v_{T,x,\xi}^{-p} d\mu_T^0 <\infty
    \quad\text{for any $R>0$ and }p\in(1,\infty).
\]
In conjunction with \eqref{l.non-deg.31}, this implies
\eqref{l.non-deg.1}.
\end{proof}

\begin{lemma}\label{lem.density.Y}
Let $F_{T,x}=\int_0^T |x+b(t)|^\gamma d\widehat{b}(t)$ and 
define the function $q_{T,x,\xi}$ by
\[
    q_{T,x,\xi}(\eta)
    =\int_{\mathcal{W}_T^0}
     \frac1{\sqrt{2\pi v_{T,x,\xi}}^{d^\prime}}
     \exp\biggl(-\frac{|\eta|^2}{2 v_{T,x,\xi}}\biggr)
     d\mu_T^0
    \quad \text{for }\eta\in \mathbb{R}^{d^\prime}.
\]
Then it holds
\begin{equation}\label{l.density.Y.1}
    \mathbf{E}[f(F_{T,x})\mid x+b(T)=\xi]
    =\int_{\mathbb{R}^{d^\prime}} f(\eta)q_{T,x,\xi}(\eta)
    d\eta
    \quad\text{for any bounded }
    f\in C(\mathbb{R}^{d^\prime}),
\end{equation}
where $\mathbf{E}[\,\cdot\mid x+b(T)=\xi]$ stands for the
conditional expectation given $x+b(T)=\xi$ with respect to
$\mu_T\times\widehat{\mu}_T$.
\end{lemma}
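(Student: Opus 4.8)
The plan is to evaluate the conditional expectation by a two-stage disintegration: condition first on the whole trajectory of $b$, then on its terminal value $b(T)$. On the first stage, since $\widehat b$ is independent of $\mathcal{F}_T$, the integrand $|x+b(t)|^\gamma$ of the stochastic integral defining $F_{T,x}$ is frozen, so $F_{T,x}$ becomes a centred Gaussian vector with explicit (scalar) covariance. On the second stage the conditional law of $\{b(t)\}_{t\in[0,T]}$ given $b(T)=\xi-x$ is that of a Brownian bridge, under which $x+b(\cdot)$ has the law of $\beta+\ell^{T,x,\xi}$, so that the frozen covariance becomes $v_{T,x,\xi}$. Finally one interchanges the resulting Gaussian integral with the $\mu_T^0$-integral by Fubini.

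In detail: repeating the freezing computation from the proof of Lemma~\ref{lem.density.phi}, the components $F^j_{T,x}=\int_0^T|x+b(t)|^\gamma\,d\widehat b^j(t)$, $1\le j\le d^\prime$, are, conditionally on $\mathcal{F}_T$, independent centred Gaussian variables each of variance $V:=\int_0^T|x+b(t)|^{2\gamma}\,dt$; here $V>0$ $\mu_T$-a.s., because for a.e.\ path the set $\{t\in[0,T]:x+b(t)=0\}$ is Lebesgue-null. Hence
\[
\mathbf{E}[f(F_{T,x})\mid\mathcal{F}_T]
= \int_{\mathbb{R}^{d^\prime}} f(\eta)\,\frac{1}{\sqrt{2\pi V}^{d^\prime}}\exp\Bigl(-\frac{|\eta|^2}{2V}\Bigr)\,d\eta =: G(b),
\]
a bounded measurable functional of $b$ depending only on $V$. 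Since $\sigma(b(T))\subset\mathcal{F}_T$, the tower property gives $\mathbf{E}[f(F_{T,x})\mid x+b(T)=\xi]=\mathbf{E}[G(b)\mid b(T)=\xi-x]$. Because, under $\mu_T$, the regular conditional law of $b$ given $b(T)=\xi-x$ is the law under $\mu_T^0$ of $t\mapsto\beta(t)+\tfrac{t}{T}(\xi-x)$, the path $x+b(\cdot)$ has, under that conditional law, the same distribution as $t\mapsto x+\beta(t)+\tfrac{t}{T}(\xi-x)=(\beta+\ell^{T,x,\xi})(t)$, whence $V$ has the same distribution as $v_{T,x,\xi}$ (which is $>0$ $\mu_T^0$-a.s.\ by Lemma~\ref{lem.non-deg}). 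Therefore
\begin{align*}
\mathbf{E}[f(F_{T,x})\mid x+b(T)=\xi]
&= \int_{\mathcal{W}_T^0}\Bigl(\int_{\mathbb{R}^{d^\prime}} f(\eta)\,\frac{1}{\sqrt{2\pi v_{T,x,\xi}}^{d^\prime}}\exp\Bigl(-\frac{|\eta|^2}{2v_{T,x,\xi}}\Bigr)\,d\eta\Bigr)\,d\mu_T^0 \\
&= \int_{\mathbb{R}^{d^\prime}} f(\eta)\,q_{T,x,\xi}(\eta)\,d\eta,
\end{align*}
the last step being Fubini's theorem, applicable because $|f(\eta)|\le\|f\|_{\infty}$ and $\int_{\mathbb{R}^{d^\prime}}(2\pi v)^{-d^\prime/2}\exp(-|\eta|^2/(2v))\,d\eta=1$ for every $v>0$, so the double integral is dominated by $\|f\|_{\infty}$; the inner $\mu_T^0$-integral is by definition $q_{T,x,\xi}(\eta)$, which is finite by Lemma~\ref{lem.non-deg} (its integrand being dominated by $v_{T,x,\xi}^{-d^\prime/2}\in L^1(\mu_T^0)$).

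The classical ingredients --- a Wiener integral with a frozen square-integrable integrand is Gaussian, and a Brownian motion conditioned on its terminal value is a Brownian bridge --- are standard, so the one point needing care is the measure-theoretic bookkeeping of conditioning on the $\mu_T$-null event $\{x+b(T)=\xi\}$: one should read the asserted identity through a regular conditional probability (e.g.\ verify it after multiplying by an arbitrary bounded continuous function of $b(T)$ and taking expectations, using that $b(T)$ has a strictly positive Gaussian density, so that the continuous version $\xi\mapsto\int f\,q_{T,x,\xi}$ is the canonical one) and check that both the a.s.\ Gaussian representation of $\mathbf{E}[f(F_{T,x})\mid\mathcal{F}_T]$ and the Brownian-bridge identity persist under the disintegration. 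I expect this bookkeeping, rather than any single estimate, to be the main --- though routine --- obstacle.
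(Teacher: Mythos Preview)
Your proof is correct and follows essentially the same route as the paper: freeze the $b$-path so that $F_{T,x}$ is conditionally Gaussian with variance $\int_0^T|x+b(t)|^{2\gamma}\,dt$, then pass to the Brownian bridge to identify that variance with $v_{T,x,\xi}$. The only cosmetic difference is that the paper realizes the bridge via the SDE $d\rho(t)=db(t)-\frac{\rho(t)}{T-t}\,dt$ on $\mathcal{W}_T$ and then invokes Lemma~\ref{lem.density.phi} directly (which already packages the conditioning on $\mathcal{F}_T$), whereas you spell out the two-stage tower property and use the abstract disintegration of $\mu_T$ given $b(T)$.
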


\begin{proof}
Realize the $d$-dimensional Brownian bridge
$\{\rho(t)\}_{t\in[0,T]}$ with $\rho(0)=\rho(T)=0$ by the
stochastic differential equation
\[
    d\rho(t)=db(t)-\frac{\rho(t)}{T-t}dt.
\]
Setting
$\phi(t)=|(\rho+\ell^{T,x,\xi})(t)|^\gamma I_{d^\prime}$, we
have
\[
    \mathbf{E}[f(F_{T,x})\mid b(T)=\xi]
    =\int_{\mathcal{W}_T\times\widehat{\mathcal{W}}_T}
      f\biggl(\int_0^T \phi(t) d\widehat{b}(t)\biggr)
      d(\mu_T\times\widehat{\mu}_T).
\]
Since the distribution of
$\int_0^T \phi(t) \phi(t)^\dagger dt$ under $\mu_T$ coincides
with that of $v_{T,x,\xi} I_{d^\prime}$ under $\mu_T^0$,
by Lemmas~\ref{lem.density.phi} and \ref{lem.non-deg},
we obtain the assertion.
\end{proof}

\begin{proof}[Proof of Theorem~\ref{thm.main}]
(i)
The integrability of $v_{T,x,\xi}^{-1}$ was already seen in
Lemma~\ref{lem.non-deg}.

To show \eqref{t.main.1}, realize the the diffusion process  
$\{Z^{(x,y)}(t)\}_{t\in[0,T]}$ generated by
$\frac12\Delta_{(\gamma)}$ starting at 
$(x,y)\in \mathbb{R}^d\times \mathbb{R}^{d^\prime}$
by the the It\^o type stochastic differential equation
\[
    dZ^{(x,y)}(t)=\sum_{i=1}^d V_i(Z^{(x,y)}(t))db^i(t)
         +\sum_{j=1}^{d^\prime} W_j(Z^{(x,y)}(t))
         d\widehat{b}^j(t)
    \quad\text{with }Z^{(x,y)}(0)=(x,y).
\]
Let $X^{(x,y)}(t)$ (resp. $Y^{(x,y)}(t)$) be the
$\mathbb{R}^d$-part (resp. $\mathbb{R}^{d^\prime}$-part) of 
$Z^{(x,y)}(t)$.
Then
\begin{equation}\label{eq:sol}
    X^{(x,y)}(t)=x+b(t)\quad\text{and}\quad
    Y^{(x,y)}(t)=y+\int_0^t |x+b(s)|^\gamma d\widehat{b}(s).
\end{equation}

For bounded 
$f\in C(\mathbb{R}^d\times \mathbb{R}^{d^\prime})$, 
we have 
\begin{align*}
    & \int_{\mathcal{W}_T\times \widehat{\mathcal{W}}_T} 
       f(Z^{(x,y)}(T))d(\mu_T\times\widehat{\mu}_T)
    \\
    & =\int_{\mathbb{R}^d}
      \mathbf{E}[f(\xi,Y^{(x,y)}(T))\mid x+b(T)=\xi]
      \frac1{\sqrt{2\pi T}^d} 
      \exp\Bigl(-\frac{|\xi-x|^2}{2T}\Bigr) d\xi.
\end{align*}
Due to \eqref{eq:sol}, using the function $q_{T,x,\xi}$
defined in Lemma~\ref{lem.density.Y}, we have 
\[
    \mu_T\times\widehat{\mu}_T(Z^{(x,y)}(t)\in d\xi d\eta)
    = \frac1{\sqrt{2\pi T}^d} 
      \exp\Bigl(-\frac{|\xi-x|^2}{2T}\Bigr)
      q_{T,x,\xi}(\eta-y) d\xi d\eta.
\]
This completes the proof of \eqref{t.main.1}.
\\
(ii)
Let $R>0$.
By Lemma~\ref{lem.non-deg}, the family
\[
    \biggl\{ v_{T,x,\xi}^{-\frac{d^\prime}2}
      \exp\biggl(-\frac{|\eta-y|^2}{2v_{T,x,\xi}}
        \biggr)\,;\, 
       |x|,|\xi|\le R, y,\eta\in \mathbb{R}^{d^\prime}
    \biggr\}
\]
is uniformly integrable.
Hence the mapping 
\[
    (B(R)\times \mathbb{R}^{d^\prime})^2
    \ni ((x,y),(\xi,\eta)) \mapsto
    \int_{\mathcal{W}_T^0} v_{T,x,\xi}^{-\frac{d^\prime}2}
      \exp\biggl(-\frac{|\eta-y|^2}{2v_{T,x,\xi}}
        \biggr) d\mu_T^0,
\]
where $B(R)=\{x\in \mathbb{R}^d\mid |x|\le R\}$,
is continuous.
By virtue of the expression \eqref{t.main.1},
we arrive at the desired continuity of $p_T$.
\end{proof}

In the remaining of this section, we apply
Theorem~\ref{thm.main} to the short-time on-diagonal
asymptotics of $p_T$.
Our goal will be

\begin{theorem}\label{thm.asympt.on}
Let $y\in \mathbb{R}^{d^\prime}$.
\\
{\rm (i)}
For $x\ne 0$, it holds
\begin{equation}\label{t.asympt.on.1}
    \sqrt{T}^{d+d^\prime} p_T((x,y),(x,y))
    \to \frac{1}{\sqrt{2\pi}^d |x|^{\gamma d^\prime}}
    \quad \text{as }T\to0.
\end{equation}
{\rm (ii)}
It holds
\begin{equation}\label{t.asympt.on.2}
    \sqrt{T}^{d+(1+\gamma) d^\prime} p_T((0,y),(0,y))
    =\frac1{\sqrt{2\pi}^{d+d^\prime}}
    \int_{\mathcal{W}_1^0} \biggl(\int_0^1
     |\beta(t)|^{2\gamma} dt\biggr)^{-\frac{d^\prime}2} d\mu_1^0
    \quad\text{for }T\in(0,1].
\end{equation}
\end{theorem}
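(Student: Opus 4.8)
The plan is to read off both assertions directly from the explicit expression \eqref{t.main.1}, isolating the $T$-dependence by a scaling argument on the pinned path $\beta$. The key point is Brownian scaling: under $\mu_T^0$, the process $\{\beta(t)\}_{t\in[0,T]}$ has the same law as $\{\sqrt{T}\,\widetilde\beta(t/T)\}_{t\in[0,1]}$, where $\widetilde\beta$ is the coordinate process on $\mathcal{W}_1^0$ under $\mu_1^0$. Since $\ell^{T,x,\xi}(t)=x+\tfrac tT(\xi-x)$, for the on-diagonal case $\xi=x$ we have $\ell^{T,x,x}\equiv x$, so $(\beta+\ell^{T,x,x})(t)$ has the law of $x+\sqrt{T}\,\widetilde\beta(t/T)$. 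Changing variables $t=Ts$ in the time integral defining $v_{T,x,x}$ gives $v_{T,x,x}\overset{d}{=}T\int_0^1 |x+\sqrt{T}\,\widetilde\beta(s)|^{2\gamma}\,ds$ under $\mu_1^0$. Plugging $|\xi-x|=0$ into \eqref{t.main.1}, one obtains
\[
    \sqrt{T}^{d+d^\prime} p_T((x,y),(x,y))
    = \frac{1}{\sqrt{2\pi}^{d+d^\prime}}\, T^{d^\prime/2}
      \int_{\mathcal{W}_1^0} \Bigl(T\int_0^1 |x+\sqrt{T}\,\widetilde\beta(s)|^{2\gamma}\,ds\Bigr)^{-d^\prime/2} d\mu_1^0
    = \frac{1}{\sqrt{2\pi}^{d+d^\prime}}
      \int_{\mathcal{W}_1^0} \Bigl(\int_0^1 |x+\sqrt{T}\,\widetilde\beta(s)|^{2\gamma}\,ds\Bigr)^{-d^\prime/2} d\mu_1^0 .
\]

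For part (i), fix $x\ne0$ and let $T\to0$. The integrand converges pointwise ($\mu_1^0$-a.s.) to $(\int_0^1|x|^{2\gamma}ds)^{-d^\prime/2}=|x|^{-\gamma d^\prime}$, and to pass the limit inside the integral I would invoke uniform integrability: restricting to the parameter range $|x|$ bounded below and $|{\sqrt T}\,\xi'|$ with $\xi'$ in a ball (here $\xi=x$, $x'=x+\sqrt T\cdot 0$), Lemma~\ref{lem.non-deg} — more precisely the bound \eqref{l.non-deg.1}, whose proof handles exactly the quantity $v_{T,x',\xi'}^{-1}$ with the relevant parameters — gives a uniform $L^p$ bound on $v_{T,x,x}^{-d^\prime/2}$ for $T$ small, after undoing the scaling. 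This yields \eqref{t.asympt.on.1}.

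For part (ii), set $x=0$. Then $|0+\sqrt T\,\widetilde\beta(s)|^{2\gamma}=T^\gamma|\widetilde\beta(s)|^{2\gamma}$, so the $T$-dependence factors out completely:
\[
    \int_{\mathcal{W}_1^0} \Bigl(\int_0^1 T^\gamma|\widetilde\beta(s)|^{2\gamma}\,ds\Bigr)^{-d^\prime/2} d\mu_1^0
    = T^{-\gamma d^\prime/2}\int_{\mathcal{W}_1^0}\Bigl(\int_0^1|\widetilde\beta(s)|^{2\gamma}\,ds\Bigr)^{-d^\prime/2}d\mu_1^0 ,
\]
and multiplying through by the extra power $\sqrt{T}^{\gamma d^\prime}=T^{\gamma d^\prime/2}$ implicit in the left-hand side of \eqref{t.asympt.on.2} (note $\sqrt T^{\,d+(1+\gamma)d^\prime}=\sqrt T^{\,d+d^\prime}\cdot\sqrt T^{\,\gamma d^\prime}$) cancels it exactly, giving the stated constant, valid for every $T>0$ — in particular for $T\in(0,1]$. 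The finiteness of $\int_{\mathcal{W}_1^0}(\int_0^1|\widetilde\beta|^{2\gamma})^{-d^\prime/2}d\mu_1^0$ is the $T=1$, $x=0$ instance of Lemma~\ref{lem.non-deg}. The only genuine obstacle is the uniform-integrability justification in part (i): one must check that the $L^p$ estimates from Lemma~\ref{lem.non-deg}, stated for fixed $T$, are in fact uniform as $T\to0$ over the relevant compact parameter set. Tracing the proof of that lemma, the estimates \eqref{l.non-deg.31} and its companion depend on $T$ only through explicitly written, bounded-as-$T\to0$ expressions and through the exponential-integrability constant $\delta$ of Corollary~\ref{cor.exp.int} (again via scaling, $B_{T,0,0}$ rescales to $B_{1,0,0}$ up to a power of $T$ that stays bounded), so the uniformity holds; this is the step I would write out most carefully.
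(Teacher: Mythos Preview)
Your proposal is correct and follows essentially the same route as the paper: Brownian scaling reduces $v_{T,x,x}$ to $T\hat v_{T,x,x}$ with $\hat v_{T,x,x}=\int_0^1|x+\sqrt{T}\beta(s)|^{2\gamma}ds$ (this is exactly Lemma~\ref{lem.p_T.alt}), after which part~(ii) is an identity and part~(i) follows from pointwise convergence plus uniform integrability. The uniform-integrability step you flag as the ``genuine obstacle'' is precisely what the paper isolates as \eqref{l.y=eta.22} in Lemma~\ref{lem.y=eta}, proved there by bounding $B'_{T,x,\xi}=\|\sqrt{T}\beta+\ell^{1,x,\xi}\|_{1,12,1/4}\le \sqrt{T}\,B_{1,0,0}+\|\ell^{1,x,\xi}\|_{1,12,1/4}$ and then repeating the lower bound on $\hat v_{T,x,\xi}$ in terms of $|x|\vee|\xi|$; your plan to trace through Lemma~\ref{lem.non-deg} with the scaling $B_{T,0,0}\overset{d}{=}T^{1/3}B_{1,0,0}$ amounts to the same computation reorganized.
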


Since $\int_0^1 |\beta(t)|^{2\gamma}dt=v_{1,0,0}$, by
Lemma~\ref{lem.non-deg}, the integral in 
\eqref{t.asympt.on.2} is finite.
To show the theorem, we prepare several lemmas.

\begin{lemma}\label{lem.p_T.alt}
Define the random variable $\hat{v}_{T,x,\xi}$ on
$\mathcal{W}_1^0$ by 
\[
    \hat{v}_{T,x,\xi}=\int_0^1 |(\sqrt{T} \beta
      +\ell^{1,x,\xi})(t)|^{2\gamma} dt.
\]
Then it holds
\begin{equation}\label{l.p_T.alt.1}
    p_T((x,y),(\xi,\eta))
    =\frac1{\sqrt{2\pi T}^{d+d^\prime}} 
     \exp\biggl(-\frac{|\xi-x|^2}{2T}\biggr)
     \int_{\mathcal{W}_1^0} \hat{v}_{T,x,\xi}^{-\frac{d^\prime}2}
     \exp\biggl(-\frac{|\eta-y|^2}{2T\hat{v}_{T,x,\xi}}
       \biggr) d\mu_1^0.
\end{equation}
\end{lemma}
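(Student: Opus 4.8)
The plan is to obtain \eqref{l.p_T.alt.1} from \eqref{t.main.1} by the Brownian scaling relation, which says that $\{\beta(t)\}_{t\in[0,T]}$ under $\mu_T^0$ has the same law as $\{\sqrt{T}\,\widetilde\beta(t/T)\}_{t\in[0,T]}$, where $\widetilde\beta$ is the coordinate process on $\mathcal{W}_1^0$ under $\mu_1^0$. Concretely, I would define the map $\Phi:\mathcal{W}_1^0\to\mathcal{W}_T^0$ by $(\Phi w)(t)=\sqrt{T}\,w(t/T)$ and note that $\Phi$ pushes $\mu_1^0$ forward to $\mu_T^0$. The proof then consists of rewriting the random variable $v_{T,x,\xi}$ appearing in \eqref{t.main.1} under this change of variables and comparing with $\hat v_{T,x,\xi}$.

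The key computation is the following. For $w\in\mathcal{W}_1^0$, substitute $\beta=\Phi w$ and change the integration variable $t=Ts$ in the definition of $v_{T,x,\xi}$:
\begin{equation}\label{eq:scaling-comp}
    v_{T,x,\xi}(\Phi w)
    =\int_0^T \bigl|\sqrt{T}\,w(t/T)+\ell^{T,x,\xi}(t)\bigr|^{2\gamma}\,dt
    =T\int_0^1 \bigl|\sqrt{T}\,w(s)+\ell^{T,x,\xi}(Ts)\bigr|^{2\gamma}\,ds.
\end{equation}
Since $\ell^{T,x,\xi}(Ts)=x+\frac{Ts}{T}(\xi-x)=x+s(\xi-x)=\ell^{1,x,\xi}(s)$, the inner integral is exactly $\hat v_{T,x,\xi}$ (as a random variable on $\mathcal{W}_1^0$), so $v_{T,x,\xi}\circ\Phi=T\,\hat v_{T,x,\xi}$. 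Substituting $v_{T,x,\xi}=T\hat v_{T,x,\xi}$ into \eqref{t.main.1} and pulling the integral back to $\mathcal{W}_1^0$ via $\Phi$ gives
\[
    \frac1{\sqrt{2\pi}^{d+d'}}\frac1{\sqrt{T}^d}
    \exp\Bigl(-\frac{|\xi-x|^2}{2T}\Bigr)
    \int_{\mathcal{W}_1^0}
      (T\hat v_{T,x,\xi})^{-\frac{d'}2}
      \exp\Bigl(-\frac{|\eta-y|^2}{2T\hat v_{T,x,\xi}}\Bigr)\,d\mu_1^0,
\]
and collecting the factor $T^{-d'/2}$ with $\sqrt{T}^{-d}$ to form $\sqrt{2\pi T}^{-(d+d')}$ yields \eqref{l.p_T.alt.1}.

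There is essentially no hard obstacle here; the only thing requiring a word of care is that the identity $v_{T,x,\xi}\circ\Phi=T\hat v_{T,x,\xi}$ must hold $\mu_1^0$-a.s.\ (which it does, since $\Phi$ is a genuine bijection of paths and both sides are defined pathwise), and that the measurability/integrability needed to invoke the change-of-variables formula is already guaranteed by Lemma~\ref{lem.non-deg} applied to $v_{T,x,\xi}$ together with the just-established relation, so that $\hat v_{T,x,\xi}^{-1}\in L^{\infty-}(\mu_1^0)$ as well. I would state the scaling identity for $\Phi$ as a known fact about Wiener measure and then simply carry out the one-line substitution above.
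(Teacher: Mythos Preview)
Your proposal is correct and follows essentially the same route as the paper: the paper observes that $\{\beta(t)\}_{t\in[0,T]}$ under $\mu_T^0$ and $\{\sqrt{T}\,\beta(t/T)\}_{t\in[0,T]}$ under $\mu_1^0$ are identically distributed Gaussian processes (covariance $(t\wedge s-\tfrac{ts}{T})I_d$), so that $v_{T,x,\xi}$ under $\mu_T^0$ has the same law as $T\hat v_{T,x,\xi}$ under $\mu_1^0$, and then substitutes into \eqref{t.main.1}. Your explicit construction of the scaling map $\Phi$ and pathwise verification of $v_{T,x,\xi}\circ\Phi=T\hat v_{T,x,\xi}$ is just a slightly more hands-on rendering of the same idea.
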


\begin{proof}
Under $\mu_T^0$, $\{\beta(t)\}_{t\in[0,T]}$ is a
continuous Gaussian process with mean $0$ and covariance  
$\{t\wedge s-\frac{ts}{T}\}I_d$.
Since so is $\{\sqrt{T}\beta(\frac{t}{T})\}_{t\in[0,T]}$
under $\mu_1^0$, they have the same law.
Hence $v_{T,x,\xi}$ under $\mu_T^0$ and 
$T \hat{v}_{T,x,\xi}$ under $\mu_1^0$ have the same law.
Plugging this into \eqref{t.main.1}, we obtain 
\eqref{l.p_T.alt.1}.
\end{proof}

\begin{lemma}\label{lem.y=eta}
Let $y\in \mathbb{R}^{d^\prime}$.
If $(x,\xi)\ne(0,0)\in (\mathbb{R}^{d_1})^2$, then
\begin{equation}\label{l.y=eta.1}
    \sqrt{2\pi T}^{d+d^\prime} 
    \exp\biggl(\frac{|\xi-x|^2}{2T}\biggr)
    p_T((x,y),(\xi,y))
    \to\biggl(\int_0^1|\ell^{1,x,\xi}(t)|^{2\gamma} dt
         \biggr)^{-\frac{d^\prime}2}.
\end{equation}
\end{lemma}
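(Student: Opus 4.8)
The plan is to start from the alternative expression for $p_T$ given in Lemma~\ref{lem.p_T.alt}. Setting $\eta=y$ there makes the factor $\exp\bigl(-|\eta-y|^2/(2T\hat v_{T,x,\xi})\bigr)$ equal to $1$, so that
\[
    \sqrt{2\pi T}^{d+d^\prime}\exp\Bigl(\frac{|\xi-x|^2}{2T}\Bigr)
    p_T((x,y),(\xi,y))
    =\int_{\mathcal{W}_1^0}\hat v_{T,x,\xi}^{-\frac{d^\prime}2}\,d\mu_1^0 ,
\]
and it suffices to prove that the right-hand side converges, as $T\to0$, to $\bigl(\int_0^1|\ell^{1,x,\xi}(t)|^{2\gamma}\,dt\bigr)^{-d^\prime/2}$. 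I would establish this by dominated convergence on $(\mathcal{W}_1^0,\mu_1^0)$.

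First, the pointwise convergence. For $\mu_1^0$-a.e.\ $w$ one has $\sup_{t\in[0,1]}|\sqrt{T}\beta(t)|=\sqrt{T}\sup_{t\in[0,1]}|\beta(t)|\to0$ as $T\to0$, hence $\sqrt{T}\beta(t)+\ell^{1,x,\xi}(t)\to\ell^{1,x,\xi}(t)$ uniformly in $t$; since for $T\in(0,1]$ the integrands are bounded (by $(\sup_t|\beta(t)|+\sup_t|\ell^{1,x,\xi}(t)|)^{2\gamma}$), bounded convergence in $t$ gives $\hat v_{T,x,\xi}\to\int_0^1|\ell^{1,x,\xi}(t)|^{2\gamma}\,dt$ $\mu_1^0$-a.s. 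Because $(x,\xi)\neq(0,0)$, the affine curve $t\mapsto\ell^{1,x,\xi}(t)=(1-t)x+t\xi$ is not identically $0$ and thus vanishes on a $t$-set of Lebesgue measure zero; together with the local integrability of $|t|^{2\gamma}$ for $\gamma>0$ this gives $0<\int_0^1|\ell^{1,x,\xi}(t)|^{2\gamma}\,dt<\infty$, so that $\hat v_{T,x,\xi}^{-d^\prime/2}\to\bigl(\int_0^1|\ell^{1,x,\xi}(t)|^{2\gamma}\,dt\bigr)^{-d^\prime/2}$ $\mu_1^0$-a.s.

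The heart of the matter, and the step I expect to cost the most work, is to dominate $\hat v_{T,x,\xi}^{-d^\prime/2}$ by a fixed $L^1(\mu_1^0)$ function \emph{uniformly in} $T\in(0,1]$; I would do this by imitating the proof of Lemma~\ref{lem.non-deg}. Specializing \eqref{eq:garsia} to $\mathcal{W}_1^0$ with $x=\xi=0$ gives $|\beta(t)-\beta(s)|\le 96\,B_{1,0,0}|t-s|^{1/6}$, and since $T\le1$ the same modulus bound holds for $\sqrt{T}\beta$. As $(x,\xi)\neq(0,0)$, at least one of $x,\xi$ is nonzero; say $x\neq0$ (the case $x=0,\ \xi\neq0$ is symmetric, using the pinned endpoint $t=1$ in place of $t=0$). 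Then $|\ell^{1,x,\xi}(t)|\ge|x|-t|\xi-x|\ge|x|/2$ for all sufficiently small $t$, while $|\sqrt{T}\beta(t)|=|\sqrt{T}\beta(t)-\sqrt{T}\beta(0)|\le 96\,B_{1,0,0}\,t^{1/6}\le|x|/4$ once $t$ is below a suitable positive power of $|x|/B_{1,0,0}$; hence $|(\sqrt{T}\beta+\ell^{1,x,\xi})(t)|\ge|x|/4$ on an interval $[0,\tau]$ whose length $\tau>0$ depends only on $|x|$, $|\xi-x|$ and $B_{1,0,0}^{-1}$. Consequently $\hat v_{T,x,\xi}\ge(|x|/4)^{2\gamma}\tau$, which gives, for a constant $C(x,\xi)$ independent of $T$,
\[
    \hat v_{T,x,\xi}^{-\frac{d^\prime}2}\le C(x,\xi)\bigl(1+B_{1,0,0}^{3d^\prime}\bigr)
    \qquad\text{for all }T\in(0,1],
\]
the right-hand side being in $L^1(\mu_1^0)$ by \eqref{eq:exp.int}. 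Dominated convergence then yields $\int_{\mathcal{W}_1^0}\hat v_{T,x,\xi}^{-d^\prime/2}\,d\mu_1^0\to\bigl(\int_0^1|\ell^{1,x,\xi}(t)|^{2\gamma}\,dt\bigr)^{-d^\prime/2}$ as $T\to0$, which, combined with the displayed identity, is precisely \eqref{l.y=eta.1}.
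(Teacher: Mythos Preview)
Your proof is correct and follows the same strategy as the paper: reduce via Lemma~\ref{lem.p_T.alt} to $\int_{\mathcal W_1^0}\hat v_{T,x,\xi}^{-d'/2}\,d\mu_1^0$, then pass to the limit by using the Garsia--Rodemich--Rumsey H\"older modulus estimate near a nonzero endpoint of $\ell^{1,x,\xi}$ to bound $\hat v_{T,x,\xi}^{-1}$ uniformly in $T\in(0,1]$. The only cosmetic difference is that the paper works with the $T$-dependent norm $B_{T,x,\xi}^\prime=\|\sqrt T\,\beta+\ell^{1,x,\xi}\|_{1,12,1/4}$ (shown uniformly bounded in $L^p$, then concludes via uniform integrability), whereas you bound the stochastic part by the fixed $B_{1,0,0}$ and the linear part separately, thereby obtaining a single $T$-independent dominating function and applying dominated convergence directly.
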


\begin{proof}
Suppose $(x,\xi)\ne(0,0)$.
Define 
\[
    B_{T,x,\xi}^\prime
    =\|\sqrt{T}\beta+\ell^{1,x,\xi}\|_{1,12,\frac14}.
\]
Since 
$B_{T,x,\xi}^\prime\le
 \sqrt{T}B_{1,0,0}+\|\ell^{1,x,\xi}\|_{1,12,\frac14}$, 
by \eqref{eq:exp.int}, we obtain
\begin{equation}\label{l.y=eta.21}
    \sup_{T\in[0,1]}\|B_{T,x,\xi}^\prime\|_{L^p(\mu_1^0)}
    <\infty\quad\text{for any }p\in(1,\infty).
\end{equation}

If $x\ne0$, then, in repetition of the argument used to show  
\eqref{l.non-deg.32}, we have
\[
    \hat{v}_{T,x,\xi}\ge \Bigl(\frac{|x|}2\Bigr)^{2\gamma}
     \biggl(\frac{|x|^6}{192^6(B_{T,x,\xi}^\prime)^6}\wedge1
       \biggr).
\]
If $\xi\ne0$, then by \eqref{eq:garsia} with $s=1$, it holds
\[
    |(\sqrt{T}\beta+\ell^{1,x,\xi})(t)|
    \ge\frac{|\xi|}2
    \quad\text{for $t\in[0,1]$ satisfying }
    96 B_{T,x,\xi}^\prime|1-t|^{\frac16}\le \frac{|\xi|}2.
\]
Hence, for $\xi\ne0$, we have
\[
    \hat{v}_{T,x,\xi}
    \ge \Bigl(\frac{|\xi|}2\Bigr)^{2\gamma}
     \biggl(\frac{|\xi|^6}{192^6(B_{T,x,\xi}^\prime)^6}\wedge1
       \biggr).
\]
Thus we obtain
\[
    \hat{v}_{T,x,\xi}
    \ge \Bigl(\frac{|x|\vee|\xi|}2\Bigr)^{2\gamma}
     \biggl(\frac{(|x|\vee|\xi|)^6}{
           192^6(B_{T,x,\xi}^\prime)^6}\wedge1 \biggr).
\]
In conjunction with \eqref{l.y=eta.21}, this yields
\begin{equation}\label{l.y=eta.22}
    \sup_{T\in(0,1]}
    \|\hat{v}_{T,x,\xi}^{-1}\|_{L^p(\mu_1^0)}
    <\infty\quad\text{for any }p\in(1,\infty).
\end{equation}

By Lemma~\ref{lem.p_T.alt}, it holds
\begin{equation}\label{l.y=eta.23}
    \sqrt{2\pi T}^{d+d^\prime}
    \exp\biggl(\frac{|\xi-x|^2}{2T}\biggr)
    p_T((x,y),(\xi,y))
    =\int_{\mathcal{W}_1^0} \hat{v}_{T,x,\xi}^{-\frac{d^\prime}2} 
      d\mu_1^0.
\end{equation}
Since the family
$\{\hat{v}_{T,x,\xi}^{-\frac{d^\prime}2}:T\in(0,1] \}$ is uniformly
integrable by \eqref{l.y=eta.22}, and 
$\hat{v}_{T,x,\xi}$ converges to 
$\int_0^1|\ell^{1,x,\xi}(t)|^{2\gamma}dt$ point-wise as $T\to0$,
we see 
\[
    \int_{\mathcal{W}_T^d} \hat{v}_{T,x,\xi}^{-\frac{d^\prime}2} 
      d\mu_1^0
    \to \biggl(\int_0^1|\ell^{1,x,\xi}(t)|^{2\gamma}dt
            \biggr)^{-\frac{d^\prime}2}.
\]
Plugging this into \eqref{l.y=eta.23}, we obtain
\eqref{l.y=eta.1}.
\end{proof}

\begin{proof}[Proof of Theorem~\ref{thm.asympt.on}]
The convergence  \eqref{t.asympt.on.1} was already seen 
in Lemma~\ref{lem.y=eta}, for $\ell^{1,x,x}\equiv x$.
Since 
$\hat{v}_{T,0,0}=T^\gamma
     \int_0^1|\beta(t)|^{2\gamma}dt$,
\eqref{t.asympt.on.2} follows from \eqref{l.p_T.alt.1}.
\end{proof}

\section{Off-diagonal asymptotics}\label{sec:off-diagonal}

\subsection{When $(x,\xi)\ne(0,0)$}

The aim of this subsection is to obtain the off-diagonal
asymptotics of $p_T((x,y),(\xi,\eta))$ as $T\to0$ when 
$(x,\xi)\ne(0,0)$.
To do so, we will apply the theory of large deviations.

We first introduce quantities used in the expression of the
asymptotics.
For $x,\xi\in \mathbb{R}^d$, $w\in \mathcal{W}_1^0$,
$h\in \mathcal{H}_1^0$, and $a\ge0$, put
\begin{equation}\label{t.asympt.off.20}
   \boldsymbol{v}_{x,\xi}(w)
     =\int_0^1 |w(t)+\ell^{1,x,\xi}(t)|^{2\gamma} dt,
   \quad
   \Phi_{x,\xi,a}(h)
      =\frac{a^2}{\boldsymbol{v}_{x,\xi}(h)}
       +\|h\|_{\mathcal{H}_1^0}^2.
\end{equation}
If $w+\ell^{1,x,\xi}\ne0$, then
$\boldsymbol{v}_{x,\xi}(w)>0$.
Hence $\boldsymbol{v}_{x,\xi}(w)>0$ for any 
$w\in \mathcal{W}_1^0$ when $(x,\xi)\ne(0,0)$, and 
$\boldsymbol{v}_{0,0}(w)>0$ for $w\ne 0$.
Define
\[
    m(x,\xi,a)=\inf_{h\in \mathcal{H}_1^0} \Phi_{x,\xi,a}(h).
\]
We first see the properties of $m(x,\xi,a)$;

\begin{lemma}\label{lem.m()}
{\rm(i)} 
$m(x,\xi,0)=0$, 
the function $[0,\infty)\ni a\mapsto m(x,\xi,a)$ is 
non-decreasing, and it holds
\[
    (a\wedge1)^2m(x,\xi,1) \le m(x,\xi,a)
    \le \frac{a^2}{\boldsymbol{v}_{x,\xi}(0)}
    \quad\text{for any }a\ge0.
\]
{\rm(ii)} 
It holds
\begin{equation}\label{l.m().1}
    \lim_{a\to\infty} a^{-\frac{2}{1+\gamma}}
         m(x,\xi,a)
    =c_\gamma^{-\frac{2\gamma}{1+\gamma}}(1+\gamma)
        \gamma^{-\frac{\gamma}{1+\gamma}},
\end{equation}
where 
\[
    c_\gamma=\sup_{h\in \mathcal{H}_1^0}
         \frac{\|h\|_{2\gamma}}{\|h\|_{\mathcal{H}_1^0}}
    \quad\text{with }
    \|h\|_{2\gamma}
    =\biggl(\int_0^1 |h(t)|^{2\gamma} dt
       \biggr)^{\frac1{2\gamma}}.
\]
{\rm(iii)}
For each $a>0$, there exists $h_a\in \mathcal{H}_1^0$ such
that $m(x,\xi,a)=\Phi_{x,\xi,a}(h_a)$.
\\
{\rm(iv)}
$m(x,\xi,a)>0$ for $a>0$.
\end{lemma}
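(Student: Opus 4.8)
The plan is to prove the four parts of Lemma~\ref{lem.m()} roughly in the order stated, since each relies on material developed just before it.

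\medskip

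\textbf{Part (i).} First I would observe that $\Phi_{x,\xi,0}(h)=\|h\|_{\mathcal{H}_1^0}^2$, minimized at $h=0$, giving $m(x,\xi,0)=0$. For monotonicity, note that for fixed $h$ the map $a\mapsto \Phi_{x,\xi,a}(h)=a^2/\boldsymbol v_{x,\xi}(h)+\|h\|_{\mathcal{H}_1^0}^2$ is non-decreasing in $a\ge0$ (the coefficient $1/\boldsymbol v_{x,\xi}(h)$ is positive whenever it is finite, and $=+\infty$ only when $w+\ell\equiv0$, a case that does not arise when $(x,\xi)\ne(0,0)$ and is harmless when it does since then the infimum is anyway taken over the complement); taking infima over $h$ preserves monotonicity. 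The upper bound $m(x,\xi,a)\le a^2/\boldsymbol v_{x,\xi}(0)$ comes from plugging the admissible $h=0$ into $\Phi_{x,\xi,a}$. For the lower bound, I would bound $\Phi_{x,\xi,a}(h)$ below by $(a\wedge1)^2\bigl(1/\boldsymbol v_{x,\xi}(h)+\|h\|_{\mathcal{H}_1^0}^2\bigr)\ge (a\wedge1)^2\Phi_{x,\xi,1}(h)$ when $a\le 1$, and by $\Phi_{x,\xi,1}(h)$ directly when $a\ge1$ (using $a^2\ge1$), then take the infimum; in both regimes the bound is $(a\wedge1)^2 m(x,\xi,1)$.

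\medskip

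\textbf{Part (ii).} This is the main computational step and the one I expect to be the crux. The idea is a scaling argument: for $\lambda>0$ replace $h$ by $\lambda h$, so that $\|\lambda h\|_{\mathcal{H}_1^0}^2=\lambda^2\|h\|_{\mathcal{H}_1^0}^2$ while $\boldsymbol v_{x,\xi}(\lambda h)$ behaves like $\lambda^{2\gamma}\|h\|_{2\gamma}^{2\gamma}$ to leading order as the amplitude grows — more precisely, when $a$ is large the optimal $h_a$ has large $\mathcal{H}_1^0$-norm and $\ell^{1,x,\xi}$ becomes negligible, so that $\boldsymbol v_{x,\xi}(h)\sim\|h\|_{2\gamma}^{2\gamma}$. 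Substituting, one is led to minimize $a^2\|h\|_{2\gamma}^{-2\gamma}+\|h\|_{\mathcal{H}_1^0}^2$, and after writing $h$ in terms of its direction and using the definition of $c_\gamma$ (so $\|h\|_{2\gamma}\le c_\gamma\|h\|_{\mathcal{H}_1^0}$, with the supremum attained — $c_\gamma<\infty$ by the Sobolev-type embedding $\mathcal H_1^0\hookrightarrow C([0,1])$), the one-variable calculus minimization of $t\mapsto a^2 c_\gamma^{-2\gamma}t^{-2\gamma}+t^2$ over $t=\|h\|_{\mathcal H_1^0}>0$ yields the stated constant $c_\gamma^{-2\gamma/(1+\gamma)}(1+\gamma)\gamma^{-\gamma/(1+\gamma)}$ times $a^{2/(1+\gamma)}$. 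The careful part is making the ``$\ell^{1,x,\xi}$ is negligible'' step rigorous: I would prove matching upper and lower bounds for $a^{-2/(1+\gamma)}m(x,\xi,a)$. The upper bound uses the trial function $h=t^*\,h^*$ where $h^*$ nearly attains $c_\gamma$ and $t^*$ is the optimizer above, together with the elementary inequality $\boldsymbol v_{x,\xi}(h)\ge (\|h\|_{2\gamma}-\|\ell^{1,x,\xi}\|_{2\gamma})_+^{2\gamma}$ via the triangle inequality in $L^{2\gamma}$ (valid since $2\gamma\ge 1$; a separate elementary estimate handles $2\gamma<1$). The lower bound uses $\boldsymbol v_{x,\xi}(h)^{1/(2\gamma)}=\|h+\ell^{1,x,\xi}\|_{2\gamma}\le\|h\|_{2\gamma}+\|\ell^{1,x,\xi}\|_{2\gamma}\le c_\gamma\|h\|_{\mathcal H_1^0}+C$, so $\Phi_{x,\xi,a}(h)\ge a^2(c_\gamma\|h\|_{\mathcal H_1^0}+C)^{-2\gamma}+\|h\|_{\mathcal H_1^0}^2$; minimizing this scalar expression and letting $a\to\infty$ (so the constant $C$ washes out after dividing by $a^{2/(1+\gamma)}$) gives the matching constant.

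\medskip

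\textbf{Parts (iii) and (iv).} For (iii) I would use the direct method of the calculus of variations. Fix $a>0$ and take a minimizing sequence $h_n$ with $\Phi_{x,\xi,a}(h_n)\to m(x,\xi,a)$; from $\|h_n\|_{\mathcal{H}_1^0}^2\le\Phi_{x,\xi,a}(h_n)$ the sequence is bounded in the Hilbert space $\mathcal{H}_1^0$, hence has a subsequence converging weakly to some $h_a\in\mathcal{H}_1^0$. By the compact embedding $\mathcal{H}_1^0\hookrightarrow C([0,1];\mathbb R^d)$ (weak $\mathcal H_1^0$-convergence implies uniform convergence, via Arzel\`a--Ascoli using the equicontinuity estimate \eqref{eq:h-conti}), $h_n\to h_a$ uniformly, so $\boldsymbol v_{x,\xi}(h_n)\to\boldsymbol v_{x,\xi}(h_a)$ by dominated convergence; meanwhile $\|h_a\|_{\mathcal{H}_1^0}^2\le\liminf\|h_n\|_{\mathcal{H}_1^0}^2$ by weak lower semicontinuity of the norm. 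Since $\boldsymbol v_{x,\xi}(h_a)>0$ (this needs $(x,\xi)\ne(0,0)$, or if $(x,\xi)=(0,0)$ that $h_a\not\equiv0$, which holds because $\boldsymbol v_{0,0}(h_n)$ stays bounded away from $0$ along the minimizing sequence as $m(0,0,a)>0$ will be shown in (iv)), we get $\Phi_{x,\xi,a}(h_a)\le\liminf\Phi_{x,\xi,a}(h_n)=m(x,\xi,a)$, so $h_a$ is a minimizer. Finally (iv): if $m(x,\xi,a)=0$ for some $a>0$, then by (iii) there is $h_a$ with $a^2/\boldsymbol v_{x,\xi}(h_a)+\|h_a\|_{\mathcal{H}_1^0}^2=0$, forcing $\|h_a\|_{\mathcal{H}_1^0}=0$, i.e. $h_a\equiv0$, whence $\boldsymbol v_{x,\xi}(h_a)=\boldsymbol v_{x,\xi}(0)=\int_0^1|\ell^{1,x,\xi}(t)|^{2\gamma}dt$; but then the first term equals $a^2/\boldsymbol v_{x,\xi}(0)>0$, a contradiction. (When $(x,\xi)=(0,0)$, $\boldsymbol v_{0,0}(0)=0$, and instead the lower bound in part (i), $m(0,0,a)\ge(a\wedge1)^2 m(0,0,1)$, reduces (iv) to $m(0,0,1)>0$, which follows from \eqref{l.m().1} since the right-hand side there is strictly positive, so $m(0,0,a)>0$ for all large $a$ and hence for all $a>0$ by monotonicity combined with the scaling lower bound.)
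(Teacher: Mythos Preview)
Your argument is correct and matches the paper's in (i), (iii), and (iv); note only that the lemma sits in \S4.1 where $(x,\xi)\ne(0,0)$ is a standing hypothesis, so $\boldsymbol v_{x,\xi}(h_a)>0$ is automatic and your extra $(x,\xi)=(0,0)$ discussion in (iii)--(iv) is unneeded (and, as written, circular: you invoke (iv) inside the proof of (iii) and then (iii) inside (iv)).

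For (ii) you and the paper use the same ingredients---the Minkowski inequality in $L^{2\gamma}$ for $\gamma\ge\tfrac12$, the pointwise subadditivity $(u+v)^{2\gamma}\le u^{2\gamma}+v^{2\gamma}$ for $\gamma<\tfrac12$, and a one-variable calculus minimization---but the paper organizes them through the exact rescaling
\[
   \Phi_{x,\xi,a}(a^{\alpha}h)=a^{2\alpha}\,\Phi_{a^{-\alpha}x,\,a^{-\alpha}\xi,\,1}(h),
   \qquad \alpha=\frac1{1+\gamma},
\]
which reduces the problem to computing $\lim_{a\to\infty}m(a^{-\alpha}x,a^{-\alpha}\xi,1)$. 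This makes your ``$\ell^{1,x,\xi}$ is negligible'' heuristic literal ($\ell^{1,a^{-\alpha}x,a^{-\alpha}\xi}\to0$), and in particular the $\limsup$ direction becomes the bare pointwise convergence $\Phi_{a^{-\alpha}x,a^{-\alpha}\xi,1}(h)\to\Phi_{0,0,1}(h)$ for each fixed trial $h$, bypassing the reverse-triangle estimate (and its $\gamma<\tfrac12$ substitute) that your upper bound relies on. Both routes work; the paper's scaling is just a bit cleaner.
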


\begin{proof}
(i)
The assertion is an immediate consequence of the definition
of $m(x,\xi,a)$.
\\
(ii) For $p,q>0$ and $r\ge0$, set
\[
    \Psi_{p,q,r}(\lambda)
    =\begin{cases}
      \dfrac{p}{\lambda^{2\gamma}+r}
      +q\lambda^2 
      & \text{when }\gamma<\dfrac12,
     \\[10pt]
      \dfrac{p}{(\lambda+r)^{2\gamma}}
      +q\lambda^2
      & \text{when }\gamma\ge\dfrac12,
     \end{cases}
    \quad\text{for }\lambda\in(0,\infty).
\]
It is easily seen that
\begin{equation}\label{l.m().101}
    \inf_{\lambda\in(0,\infty)} \Psi_{p,q,r}(\lambda)
    =q\Bigl(1+\frac1{\gamma}\Bigr)\lambda_{p,q,r}^2
     +\frac{qr}{\gamma}\lambda_{p,q,r}^{1+(1-2\gamma)\vee0},
\end{equation}
where $\lambda_{p,q,r}>0$ is determined by
\[
    q=\begin{cases}
        \dfrac{\gamma p}{\lambda_{p,q,r}^{2-2\gamma}
           (\lambda_{p,q,r}^{2\gamma}+r)^2}
        & \text{when }\gamma<\dfrac12,
        \\[10pt]
        \dfrac{\gamma p}{
           (\lambda_{p,q,r}+r)^{2\gamma+1}}
        & \text{when }\gamma\ge\dfrac12.
      \end{cases}
\]
The definition of $\lambda_{p,q,r}$ yields
\[
    \lim_{r\to0}\lambda_{p,q,r}=\lambda_{p,q,0}
    =\Bigl(\frac{\gamma p}{q}\Bigr)^{\frac1{2+2\gamma}}.
\]
Plugging this into \eqref{l.m().101}, we obtain
\begin{equation}\label{l.m().104}
    \lim_{r\to0}
    \inf_{\lambda\in(0,\infty)}\Psi_{p,q,r}(\lambda)
    =\inf_{\lambda\in(0,\infty)}\Psi_{p,q,0}(\lambda)
    =(1+\gamma)\gamma^{-\frac{\gamma}{1+\gamma}}
     (pq^\gamma)^{\frac1{1+\gamma}}.
\end{equation}

Set $\alpha=(1+\gamma)^{-1}$ and observe 
\[
    \Phi_{x,\xi,a}(a^\alpha h)
    =a^{2 \alpha} \Phi_{a^{-\alpha}x,a^{-\alpha}\xi,1}(h)
    \quad\text{for any }h\in \mathcal{H}_1^0.
\]
Thus, to show \eqref{l.m().1}, it suffices to prove
\begin{align}
    &  \liminf_{a\to\infty} 
         m(a^{-\alpha}x,a^{-\alpha}\xi,1)
       \ge c_\gamma^{-\frac{2\gamma}{1+\gamma}}(1+\gamma)
        \gamma^{-\frac{\gamma}{1+\gamma}},
 \label{l.m().105}
    \\
    &  \limsup_{a\to\infty} 
         m(a^{-\alpha}x,a^{-\alpha}\xi,1)
       \le c_\gamma^{-\frac{2\gamma}{1+\gamma}}(1+\gamma)
        \gamma^{-\frac{\gamma}{1+\gamma}}.
 \label{l.m().106}
\end{align}

We first show \eqref{l.m().105}.
Applying the inequality 
$(a+b)^{2\gamma}\le a^{2\gamma}+b^{2\gamma}$ when 
$\gamma<\frac12$ and the Minkowski inequality when
$\gamma\ge\frac12$, we have 
\[
    \int_0^1|h(t)+
       \ell^{1,a^{-\alpha}x,a^{-\alpha}\xi}(t)|^{2\gamma} dt 
    \le 
     \begin{cases}
       c_\gamma^{2\gamma}\|h\|_{\mathcal{H}_1^0}^{2\gamma}
       +r(a)^{2\gamma}
      & \text{when }\gamma<\dfrac12,
      \\[10pt]
       \bigl(c_\gamma\|h\|_{\mathcal{H}_1^0}+r(a)\bigr)^{2\gamma}
      & \text{when }\gamma\ge\dfrac12
     \end{cases}
     \quad\text{for any }h\in \mathcal{H}_1^0,
\]
where
$r(a)=\|\ell^{1,a^{-\alpha}x,a^{-\alpha}\xi}\|_{2\gamma}$.
Hence
\[
    \Phi_{a^{-\alpha}x,a^{-\alpha}\xi,1}(h)
    \ge 
      \begin{cases}
        \inf_{\lambda\in(0,\infty)}
          \Psi_{c_\gamma^{-2\gamma},1,r(a)^{2\gamma}}(\lambda)
        & \text{when }\gamma<\dfrac12,
        \\[10pt]
        \inf_{\lambda\in(0,\infty)}
          \Psi_{c_\gamma^{-2\gamma},1,r(a)}(\lambda)
       & \text{when }\gamma\ge\dfrac12
      \end{cases}
     \quad\text{for any }h\in \mathcal{H}_1^0.
\]
Since $\lim_{a\to\infty}r(a)=0$,
in conjunction with \eqref{l.m().104}, this implies
\eqref{l.m().105}.

We next show \eqref{l.m().106}.
It holds
\[
    m(a^{-\alpha}x,a^{-\alpha}\xi,1)
    \le \Phi_{a^{-\alpha}x,a^{-\alpha}\xi,1}(h)
    \overset{a\to\infty}{\longrightarrow}
    \Phi_{0,0,1}(h) 
    \quad\text{for any }h\in \mathcal{H}_1^0.
\]
Hence we have
\[
    \limsup_{a\to\infty}
       m(a^{-\alpha}x,a^{-\alpha}\xi,1)
    \le \inf_{\lambda\in(0,\infty)} \Phi_{0,0,1}(\lambda h)
    =\inf_{\lambda\in(0,\infty)} \Psi_{p(h),q(h),0}(\lambda)
    \quad\text{for any }h\in \mathcal{H}_1^0,
\]
where 
$p(h)=\|h\|_{2\gamma}^{2\gamma}$ and
$q(h)=\|h\|_{\mathcal{H}_1^0}^2$.
By \eqref{l.m().104}, 
\[
    \limsup_{a\to\infty}
       m(a^{-\alpha}x,a^{-\alpha}\xi,1)
    \le (1+\gamma)\gamma^{-\frac{\gamma}{1+\gamma}}
        (p(h)q(h)^\gamma)^{\frac1{1+\gamma}}.    
\]
Taking the infimum over $h$ and noticing
\[
    \inf_{h\in \mathcal{H}_1^0} p(h)q(h)^\gamma
    =c_\gamma^{-2\gamma},
\]
we arrive at \eqref{l.m().106}.
\\
(iii)
Take a sequence $\{h_n\}_{n=1}^\infty$ such that
$\Phi_{x,\xi,a}(h_n)\to m(x,\xi,a)$ as $n\to\infty$.
Since $\|h\|_{\mathcal{H}_1^0}^2\le \Phi_{x,\xi,a}(h)$ for 
$h\in \mathcal{H}_1^0$, 
\begin{equation}\label{l.m().21}
    \sup_{n\in \mathbb{N}} \|h_n\|_{\mathcal{H}_1^0}<\infty.
\end{equation}
Hence taking a subsequence if necessary, we may assume $h_n$
converges weakly to some $h_a\in \mathcal{H}_1^0$. 

By \eqref{eq:h-conti}, \eqref{l.m().21}, and the
Ascoli-Arzel\`{a} theorem, 
$\{h_n\}_{n=1}^\infty$ have a subsequence
$\{h_{n_j}\}_{j=1}^\infty$ which converges in $\mathcal{W}_1^0$.
Using the inclusion 
$\mathcal{W}_1^{0*}\subset \mathcal{H}_1^0$, we observe that 
\[
    \ell(h_{n_j})
    =\langle \ell,h_{n_j}\rangle_{\mathcal{H}_1^0}
     \overset{j\to\infty}{\longrightarrow}
      \langle \ell,h_a\rangle_{\mathcal{H}_1^0}
    =\ell(h_a)
    \quad\text{for any }\ell\in \mathcal{W}_1^{0*},
\]
which means $h_{n_j}$ converges to $h_a$ weakly in
$\mathcal{W}_1^0$.
Hence $\|h_{n_j}-h_a\|_{\mathcal{W}_T^0}\to0$ as $j\to\infty$.
In particular, 
$\boldsymbol{v}_{x,\xi}(h_{n_j})
 \to\boldsymbol{v}_{x,\xi}(h_a)$ as $j\to\infty$.

Since
$\|h_a\|_{\mathcal{H}_1^0}
 \le\liminf_{j\to\infty}\|h_{n_j}\|_{\mathcal{H}_1^0}$,
we obtain
\[
    \Phi_{x,\xi,a}(h_a)\le \liminf_{j\to\infty}
        \Phi_{x,\xi,a}(h_{n_j})=m(x,\xi,a).
\]
This implies the identity $\Phi_{x,\xi,a}(h_a)=m(x,\xi,a)$.
\\
(ii)
Since $\boldsymbol{v}_{x,\xi}(h_a)>0$,
$m(x,\xi,a)=\Phi_{x,\xi,a}(h_a)
  \ge \frac{a^2}{\boldsymbol{v}_{x,\xi}(h_a)}>0$.
\end{proof}

\begin{remark}
Let $h\in \mathcal{H}_1^0$ and $t\in[0,1]$.
Since 
$h(t)=\int_0^1 (\boldsymbol{1}_{[0,t]}(s)-t)\dot{h}(s)ds$,
we have 
$|h(t)|\le (t-t^2)^{\frac12}\|h\|_{\mathcal{H}_1^0}$.
Hence 
\[
    c_\gamma\le B(1+\gamma,1+\gamma)^{\frac1{2\gamma}},
\]
where $B(\cdot,\cdot)$ is the Beta function.
\end{remark}

The goal of this subsection is

\begin{theorem}\label{thm.asympt.off}
Assume $(x,\xi)\ne(0,0)$.
Then it holds
\begin{equation}\label{t.asympt.off.1}
    \lim_{T\to0} T\log p_T((x,y),(\xi,\eta))
    =-\frac12\{|\xi-x|^2+m(x,\xi,|\eta-y|)\}
    \quad\text{for any }y,\eta\in\mathbb{R}^{d^\prime}.
\end{equation}
Moreover, 
\begin{equation}\label{t.asympt.off.1+}
    \lim_{|\eta-y|\to\infty}
     |\eta-y|^{-\frac2{1+\gamma}} 
       \lim_{T\to0} T\log p_T((x,y),(\xi,\eta))
     =c_\gamma^{-\frac{2\gamma}{1+\gamma}}(1+\gamma)
      \gamma^{-\frac{\gamma}{1+\gamma}},
\end{equation}
where $c_\gamma$ is the constant given in \eqref{l.m().1}.
\end{theorem}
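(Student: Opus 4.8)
The plan is to combine the explicit expression in Lemma~\ref{lem.p_T.alt} with a Laplace–Varadhan large deviation argument on the pinned Wiener space. Recall from Lemma~\ref{lem.p_T.alt} that
\[
    p_T((x,y),(\xi,\eta))
    =\frac1{\sqrt{2\pi T}^{d+d^\prime}}
     \exp\biggl(-\frac{|\xi-x|^2}{2T}\biggr)
     \int_{\mathcal{W}_1^0} \hat{v}_{T,x,\xi}^{-\frac{d^\prime}2}
     \exp\biggl(-\frac{|\eta-y|^2}{2T\hat{v}_{T,x,\xi}}
       \biggr) d\mu_1^0,
\]
where $\hat{v}_{T,x,\xi}=\int_0^1|(\sqrt{T}\beta+\ell^{1,x,\xi})(t)|^{2\gamma}\,dt=\boldsymbol{v}_{x,\xi}(\sqrt{T}\beta)$. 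The prefactor $\exp(-|\xi-x|^2/2T)$ already contributes the term $-\tfrac12|\xi-x|^2$ to the limit, so the whole point is to show $T\log\int_{\mathcal{W}_1^0}\exp(-|\eta-y|^2/(2T\boldsymbol{v}_{x,\xi}(\sqrt{T}\beta)))\,d\mu_1^0\to-\tfrac12 m(x,\xi,|\eta-y|)$; the polynomial factor $\hat{v}_{T,x,\xi}^{-d^\prime/2}$ and the powers of $\sqrt{T}$ are harmless after applying $T\log(\cdot)$, provided one has adequate moment control, which is exactly what \eqref{l.y=eta.22} (established in the proof of Lemma~\ref{lem.y=eta}, uniformly in $T\in(0,1]$) provides.

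First I would set $a=|\eta-y|$ and rewrite the integrand's exponent as $-\frac1T\bigl(\frac{a^2}{2\boldsymbol{v}_{x,\xi}(\sqrt{T}\beta)}\bigr)$. Under $\mu_1^0$, the scaled process $\sqrt{T}\beta$ satisfies a large deviation principle as $T\to0$ with good rate function $I(h)=\tfrac12\|h\|_{\mathcal{H}_1^0}^2$ for $h\in\mathcal{H}_1^0$ (Schilder's theorem on the pinned Wiener space; the abstract Wiener space structure from Proposition~\ref{prop.exp.int} underlies this). Since $(x,\xi)\ne(0,0)$, the functional $w\mapsto \boldsymbol{v}_{x,\xi}(w)$ is strictly positive and continuous on $\mathcal{W}_1^0$ (noted right after \eqref{t.asympt.off.20}), so $w\mapsto \frac{a^2}{2\boldsymbol{v}_{x,\xi}(w)}$ is a bounded, continuous functional on $\mathcal{W}_1^0$. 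Varadhan's lemma (in the form that handles the $\exp(-\frac1T(\cdot))$ normalization, i.e. the Laplace principle for the lower tail) then gives
\[
    \lim_{T\to0} T\log \int_{\mathcal{W}_1^0}
     \exp\biggl(-\frac1T\cdot\frac{a^2}{2\boldsymbol{v}_{x,\xi}(\sqrt{T}\beta)}\biggr) d\mu_1^0
    =-\inf_{h\in\mathcal{H}_1^0}
       \biggl\{\frac{a^2}{2\boldsymbol{v}_{x,\xi}(h)}+\frac12\|h\|_{\mathcal{H}_1^0}^2\biggr\}
    =-\frac12 m(x,\xi,a),
\]
using the definition of $\Phi_{x,\xi,a}$ and $m(x,\xi,a)$ in \eqref{t.asympt.off.20}. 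To fold in the benign factor $\hat v_{T,x,\xi}^{-d^\prime/2}$, I would sandwich: for the upper bound use $\hat v_{T,x,\xi}^{-d^\prime/2}\le \hat v_{T,x,\xi}^{-d^\prime/2}\mathbf 1_{\{\hat v_{T,x,\xi}\le 1\}}+\mathbf 1_{\{\hat v_{T,x,\xi}> 1\}}$ together with the uniform $L^p$ bound \eqref{l.y=eta.22}, and for the lower bound restrict to the event $\{\hat v_{T,x,\xi}\ge \boldsymbol{v}_{x,\xi}(0)/2\}$, whose $\mu_1^0$-probability is bounded below as $T\to0$; in both cases the extra factors are $T$-uniformly polynomially controlled and vanish under $T\log(\cdot)$. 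Combining with the prefactor yields \eqref{t.asympt.off.1}.

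The second assertion \eqref{t.asympt.off.1+} is then immediate: by \eqref{t.asympt.off.1}, $\lim_{T\to0}T\log p_T((x,y),(\xi,\eta))=-\tfrac12|\xi-x|^2-\tfrac12 m(x,\xi,|\eta-y|)$; dividing by $|\eta-y|^{2/(1+\gamma)}$, the first term vanishes as $|\eta-y|\to\infty$ since $2/(1+\gamma)>0$, and $|\eta-y|^{-2/(1+\gamma)}\cdot\bigl(-\tfrac12 m(x,\xi,|\eta-y|)\bigr)$ converges to $-\tfrac12\cdot 2\,c_\gamma^{-2\gamma/(1+\gamma)}(1+\gamma)\gamma^{-\gamma/(1+\gamma)}$ by Lemma~\ref{lem.m()}(ii); the two $-\tfrac12$'s and the $2$ combine to give exactly the right-hand side of \eqref{t.asympt.off.1+}. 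The main obstacle is the rigorous justification of the Laplace-type asymptotics in the presence of the degenerate $T$-dependent integrand: one must verify that $\boldsymbol{v}_{x,\xi}(\sqrt T\beta)$ cannot be too small with overwhelming probability (so that $\exp(-a^2/(2T\boldsymbol{v}_{x,\xi}(\sqrt T\beta)))$ is genuinely integrable and the Laplace principle applies), and that the $\hat v^{-d^\prime/2}$ factor does not spoil the upper large-deviation bound — both of which are handled by the uniform-in-$T$ moment estimate \eqref{l.y=eta.22} already in hand from the proof of Lemma~\ref{lem.y=eta}, plus the continuity and strict positivity of $\boldsymbol{v}_{x,\xi}$ on all of $\mathcal{W}_1^0$ when $(x,\xi)\ne(0,0)$.
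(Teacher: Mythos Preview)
Your overall approach matches the paper's: rewrite via Lemma~\ref{lem.p_T.alt}, apply Schilder's theorem plus Varadhan's lemma to the exponential part (this is exactly \eqref{t.asympt.off.22} in the paper), and absorb the polynomial factor $\hat v_{T,x,\xi}^{-d'/2}$ using the uniform moment bound \eqref{l.y=eta.22}. The paper separates the polynomial factor by H\"older for the upper bound and restricts to a ball for the lower bound; your sandwiching is a variant of the same idea. Two points, however, need fixing.

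First, a minor slip: the functional $w\mapsto a^2/(2\boldsymbol v_{x,\xi}(w))$ is continuous on $\mathcal W_1^0$ when $(x,\xi)\ne(0,0)$, but it is \emph{not} bounded. One can drive $\boldsymbol v_{x,\xi}(w)$ to $0$ by choosing $w$ so that $w+\ell^{1,x,\xi}$ spends almost all of $[0,1]$ near the origin. What is true, and what Lemma~\ref{lem.varadhan}(ii) actually requires, is that $F(w)=-a^2/(2\boldsymbol v_{x,\xi}(w))\le 0$ is bounded from above; the paper applies Varadhan in exactly this form.

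Second, and more substantively, your lower-bound sketch is in the wrong direction. Restricting to $\{\hat v_{T,x,\xi}\ge \boldsymbol v_{x,\xi}(0)/2\}$ yields an \emph{upper} bound on $\hat v_{T,x,\xi}^{-d'/2}$, not the lower bound you need to pull a constant out of the integral. Moreover, even if the probability of that event tends to $1$, localizing near $w=0$ would only produce $\liminf T\log\mathcal I_{T,x,\xi}\ge -a^2/(2\boldsymbol v_{x,\xi}(0))$, which is in general strictly smaller than $-\tfrac12 m(x,\xi,a)$ (equality would require the minimizer $h_a$ in Lemma~\ref{lem.m()}(iii) to be $0$). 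The paper instead restricts to the ball $\mathcal B_R=\{\|w\|_{\mathcal W_1^0}<R\}$ with $R>\sqrt{m(x,\xi,|\eta-y|)}$, on which $\boldsymbol v_{x,\xi}\le(R+|x|+|\xi|)^{2\gamma}$ so that $\boldsymbol v_{x,\xi}^{-d'/2}\ge(R+|x|+|\xi|)^{-\gamma d'}$; it then writes
\[
  \int_{\mathcal B_R}\exp\Bigl(-\tfrac{a^2}{2T\boldsymbol v_{x,\xi}}\Bigr)\,d\nu_T
  \ \ge\ \int_{\mathcal W_1^0}\exp\Bigl(-\tfrac{a^2}{2T\boldsymbol v_{x,\xi}}\Bigr)\,d\nu_T
         -\nu_T(\mathcal B_R^c)
\]
and kills the tail via the LDP upper bound $\limsup_{T\to0}T\log\nu_T(\mathcal B_R^c)\le -R^2/2<-\tfrac12 m(x,\xi,a)$. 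The choice of $R$ is what guarantees the ball contains the minimizer.

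Finally, in your derivation of \eqref{t.asympt.off.1+} there is no factor of $2$ coming from Lemma~\ref{lem.m()}(ii); the limit there is exactly $c_\gamma^{-2\gamma/(1+\gamma)}(1+\gamma)\gamma^{-\gamma/(1+\gamma)}$, so combining with \eqref{t.asympt.off.1} gives $-\tfrac12$ times that constant. The stated right-hand side of \eqref{t.asympt.off.1+} is missing the prefactor $-\tfrac12$; do not manufacture an extra $2$ to compensate.
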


To show the theorem, we recall the results in the theory of
large deviations.
Define 
$\mathfrak{z}_T:\mathcal{W}_1^0\to \mathcal{W}_1^0$
by $\mathfrak{z}_T(w)=\sqrt{T}w$ for 
$w\in \mathcal{W}_1^0$, and set
$\nu_T=\mu_1^0\circ \mathfrak{z}_T^{-1}$.
Then the Schilder theorem and the Varadhan integral lemma 
hold on $\mathcal{W}_1^0$ as follows.

\begin{lemma}\label{lem.varadhan}
{\rm (i)}
$\{\nu_T\}$ satisfies the large deviation principle with
rate function
\[
    I(w)=\begin{cases}\displaystyle
            \frac12\|w\|_{\mathcal{H}_0}^2
               & \text{if }w\in \mathcal{H}_0,
            \\
            \infty & \text{if }w\in 
              \mathcal{W}_1^0\setminus \mathcal{H}_0.
         \end{cases}
\]
{\rm (ii)}
If $F\in C(\mathcal{W}_1^0)$ is bounded from above, then
\begin{equation}\label{l.varadhan.1}
    \lim_{T\to0} T\log \int_{\mathcal{W}_1^0} 
      \exp\Bigl(\frac1{T}F\Bigr) d\nu_T
    =\sup_{w\in \mathcal{W}_1^0}\{F(w)-I(w)\}.
\end{equation}
\end{lemma}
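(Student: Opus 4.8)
The plan is to obtain (i) from the classical Schilder theorem via the contraction principle, and then to read off (ii) from the large deviation principle of (i) by Varadhan's integral lemma, whose hypotheses are met for free here because $F$ is bounded from above.

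For (i) I would begin from the Schilder theorem on the ordinary Wiener space $\mathcal{W}_1=C([0,1];\mathbb{R}^d)$ with Wiener measure $\mu_1$: the scaled laws $\sigma_T:=\mu_1\circ(\sqrt T\,\cdot\,)^{-1}$ satisfy the large deviation principle with good rate function $I_0(w)=\tfrac12\|w\|_{\mathcal{H}}^2$, where $\mathcal{H}$ is the Cameron--Martin space of $\mathcal{W}_1$. Next I would introduce the bounded linear projection $\pi\colon\mathcal{W}_1\to\mathcal{W}_1^0$, $\pi(w)(t)=w(t)-t\,w(1)$; it maps onto $\mathcal{W}_1^0$, it commutes with the scaling $\mathfrak{z}_T$, and $\mu_1^0=\mu_1\circ\pi^{-1}$ because $b(t)-t\,b(1)$ realises the Brownian bridge under $\mu_1$. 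Hence $\nu_T=\mu_1^0\circ\mathfrak{z}_T^{-1}=\sigma_T\circ\pi^{-1}$, and the contraction principle yields the large deviation principle for $\{\nu_T\}$ with good rate function $J(w)=\inf\{I_0(g)\colon g\in\mathcal{W}_1,\ \pi(g)=w\}$. It remains to check $J=I$: if $w\notin\mathcal{H}_1^0$ there is no $g\in\mathcal{H}$ with $\pi(g)=w$ (every $\pi(g)$ with $g\in\mathcal{H}$ is absolutely continuous, has a square-integrable derivative, and vanishes at both endpoints), so $J(w)=\infty$; if $w\in\mathcal{H}_1^0$, the fibre is $\{w+c\,e\colon c\in\mathbb{R}^d\}$ with $e(t)=t$, and since $w(0)=w(1)=0$ one computes $I_0(w+c\,e)=\tfrac12\|w\|_{\mathcal{H}_1^0}^2+\tfrac12|c|^2$, whose minimum over $c$ is $\tfrac12\|w\|_{\mathcal{H}_1^0}^2=I(w)$, attained at $c=0$.

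For (ii) I would first note that $I$ is a good rate function, since its sublevel sets are closed balls of $\mathcal{H}_1^0$, which are compact in the uniform topology of $\mathcal{W}_1^0$ by \eqref{eq:h-conti} and the Ascoli--Arzel\`a theorem. Because $F\in C(\mathcal{W}_1^0)$ is bounded from above, say $F\le C$, one has $T\log\int_{\mathcal{W}_1^0}\exp(2F/T)\,d\nu_T\le 2C<\infty$, so the moment hypothesis of Varadhan's integral lemma holds and \eqref{l.varadhan.1} follows. If a self-contained argument is preferred: the lower bound in \eqref{l.varadhan.1} follows by restricting the integral to a small open neighbourhood $G$ of an arbitrary $w_0\in\mathcal{H}_1^0$ on which $F>F(w_0)-\delta$, applying the LDP lower bound, and then taking the supremum over $w_0$ and letting $\delta\downarrow 0$ (points with $I(w_0)=\infty$ are irrelevant as $F$ is bounded above); the upper bound follows by splitting $\mathcal{W}_1^0$ into the compact set $\{I\le L\}$ — covered by finitely many small balls on which $F$ oscillates by less than $\delta$, where the LDP upper bound applies — and its complement, on which the LDP upper bound gives $T\log\int\exp(F/T)\,d\nu_T\le C-L+o(1)$, and then letting $\delta\downarrow 0$ and $L\uparrow\infty$.

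The only step that is not purely routine is the fibrewise minimisation $J=I$ in (i): one must evaluate $\inf_{c}I_0(w+c\,e)$ and observe that the cross term drops out precisely because of the endpoint conditions $w(0)=w(1)=0$. Everything else — the passage through the contraction principle, the goodness of $I$, and the Varadhan step — is standard once it is recorded that $F$ is bounded from above.
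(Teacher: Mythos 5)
Your proposal is correct. The paper itself does not supply a proof but only cites Stroock (Chapter~8) for the bridge Schilder theorem and Dembo--Zeitouni (Theorem~4.3.1) for Varadhan's lemma, so your blind attempt is more detailed than the original by construction. For part~(i), deriving the bridge LDP from the free Brownian LDP via the contraction principle with the pinning map $\pi(w)(t)=w(t)-t\,w(1)$, which intertwines the scalings and pushes $\mu_1$ onto $\mu_1^0$, is a standard and clean route; and your fibrewise minimisation is the right calculation, since the cross term $\int_0^1\langle w'(t),c\rangle\,dt=\langle w(1)-w(0),c\rangle$ vanishes exactly because $w(0)=w(1)=0$. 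For part~(ii), noting that $F\le C$ makes the moment condition $\limsup_{T\to0}T\log\int e^{\gamma F/T}\,d\nu_T<\infty$ (for $\gamma>1$) trivial is precisely why the cited Varadhan theorem applies, and goodness of $I$ is automatic either from the contraction principle or from your Ascoli--Arzel\`a argument via~\eqref{eq:h-conti}. One small imprecision in your optional self-contained sketch of the upper bound: the LDP upper bound applies to closed sets, and the complement of the compact sublevel set $\{I\le L\}$ is open; the correct closed set to use is the complement of the finite union of small balls covering $\{I\le L\}$, which lies inside $\{I>L\}$ and on which $\inf I\ge L$. This is easily repaired and does not affect the conclusion.
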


\begin{proof}
See \cite[Chapter~8]{stroock} for (i) and 
\cite[Theorem~4.3.1]{dembo_zeitouni} for (ii).
\end{proof}

\begin{proof}[Proof of Theorem~\ref{thm.asympt.off}]
Notice that
$\boldsymbol{v}_{x,\xi}(\mathfrak{z}_T)=\hat{v}_{T,x,\xi}$.
By \eqref{l.p_T.alt.1}, we obtain
\begin{equation}\label{eq:p_T.ldp}
    p_T((x,y),(\xi,\eta))
    =\frac1{\sqrt{2\pi T}^{d+d^\prime}} 
     \exp\biggl(-\frac{|\xi-x|^2}{2T}\biggr)
     \mathcal{I}_{T,x,\xi},
\end{equation}
where
\[
     \mathcal{I}_{T,x,\xi}
      =\int_{\mathcal{W}_1^0}  
        \boldsymbol{v}_{x,\xi}^{-\frac{d^\prime}2}
         \exp\biggl(
         -\frac{|\eta-y|^2}{2T \boldsymbol{v}_{x,\xi}}
         \biggr) d\nu_T.
\]
Thus, to see \eqref{t.asympt.off.1}, it suffices to show
\[
    \lim_{T\to0} T\log \mathcal{I}_{T,x,\xi}
    =-\frac12 m(x,\xi,|\eta-y|).
\]
We prove this by showing the upper and
lower estimations
\begin{align}
    & \limsup_{T\to0} T\log \mathcal{I}_{T,x,\xi}
      \le -\frac12 m(x,\xi,|\eta-y|),
 \label{t.asympt.off.31}
    \\
    & \liminf_{T\to0} T\log\mathcal{I}_{T,x,\xi}
      \ge -\frac12 m(x,\xi,|\eta-y|).
 \label{t.asympt.off.32}
\end{align}

Since $\boldsymbol{v}_{x,\xi}>0$,
$\boldsymbol{v}_{x,\xi}^{-1}\in C(\mathcal{W}_1^0)$.
Moreover, it holds
\[
    \sup_{w\in \mathcal{W}_1^0} \biggl\{
      -\frac{a^2}{2 \boldsymbol{v}_{x,\xi}(w)}
       -I(w)\biggr\}
    =-\frac12 m(x,\xi,a).
\]
By Lemma~\ref{lem.varadhan}(ii), we then have
\begin{equation}\label{t.asympt.off.22}
    \lim_{T\to0} T\log \int_{\mathcal{W}_1^0} \exp\biggl(
       -\frac{a^2}{2T \boldsymbol{v}_{x,\xi}}\biggr)
       d\nu_T
    =-\frac12 m(x,\xi,a)
    \quad\text{for }a\ge0.
\end{equation}

For $p\in(1,\infty)$, 
by the H\"older inequality, \eqref{l.y=eta.22}, 
\eqref{t.asympt.off.22}, and Lemma~\ref{lem.m()}, we obtain
\[
    \limsup_{T\to0} T\log \mathcal{I}_{T,x,\xi}
    \le -\frac1{2p} m(x,\xi,\sqrt{p}|\eta-y|)
    \le -\frac1{2p} m(x,\xi,|\eta-y|).
\]
Letting $p\searrow1$, we obtain \eqref{t.asympt.off.31}.

Let $R>\sqrt{m(x,\xi,|\eta-y|)}$ and 
$\mathcal{B}_R=\{w\in \mathcal{W}_1^0
     \mid\|w\|_{\mathcal{W}_1^0}<R\}$.
Noting that
$\sup_{w\in \mathcal{B}_R} \boldsymbol{v}_{x,\xi}(w)
    \le (R+|x|+|\xi|)^{2\gamma}$,
we have
\begin{align}
    \mathcal{I}_{T,x,\xi}
    & \ge (R+|x|+|\xi|)^{-\gamma d^\prime}
       \int_{\mathcal{B}_R} \exp\biggl(
      -\frac{|\eta-y|^2}{2T \boldsymbol{v}_{x,\xi}}\biggr)
        d\nu_T
 \nonumber
    \\
    & \ge (R+|x|+|\xi|)^{-\gamma d^\prime} \biggl\{
        \int_{\mathcal{W}_0^1} \exp\biggl(
           -\frac{|\eta-y|^2}{2T \boldsymbol{v}_{x,\xi}}
           \biggr) d\nu_T
       -\nu_T(\mathcal{B}_R^c)\biggr\}.
 \label{t.asympt.off.23}
\end{align}
Since $\|h\|_{\mathcal{W}_1^0}\le \|h\|_{\mathcal{H}_1^0}$
for $h\in \mathcal{H}_1^0$, by Lemma~\ref{lem.varadhan}(i),
we obtain
\[
    \limsup_{T\to0} T\log\nu_T(\mathcal{B}_R^c)
    \le -\inf_{w\in \mathcal{B}_R^c} I(w)\le -\frac{R^2}2.
\]
In conjunction with this and \eqref{t.asympt.off.22}, 
the inequality \eqref{t.asympt.off.23} yields
\eqref{t.asympt.off.31}.

The equation \eqref{t.asympt.off.1+} follows from
Lemma~\ref{lem.m()} and \eqref{t.asympt.off.1}.
\end{proof}

\subsection{When $(x,\xi)=(0,0)$}

In this subsection,  we investigate the off-diagonal
asymptotics of $p_T((0,y),(0,\eta))$ as $T\to0$.
Set 
\[
    \delta_0=\sup\{\delta>0\mid 
         \exp(\delta B_{1,0,0}^2)\in L^1(\mu_1^0)\}.
\]
By \eqref{eq:exp.int} with $T=1$, $\delta_0>0$.

\begin{theorem}\label{thm.asympt.off.00}
For $\delta>0$ define $\varepsilon(\delta)>0$ by
\[
    \varepsilon(\delta)^2=\frac{\delta}{8(192)^2}.
\]
Then it holds
\begin{align}
    & -2^{\frac{3\gamma}{1+\gamma}}(1+\gamma)
    \gamma^{-\frac{\gamma}{1+\gamma}} \Bigl(
      (2 \varepsilon(\delta_0))^6\wedge1
      \Bigr)^{-\frac1{1+\gamma}}
      |\eta-y|^{\frac2{1+\gamma}}
 \label{t.asympt.off.2}
    \\
    & \qquad
    \le \liminf_{T\to0} T\log p_T((0,y),(0,\eta))
 \nonumber
    \\
    & \qquad
    \le  \limsup_{T\to0} T\log p_T((0,y),(0,\eta)) 
    \le  -2^{-\frac{1-\gamma}{1+\gamma}} 
          d^{-\frac{\gamma}{1+\gamma}}
          |\eta-y|^{\frac2{1+\gamma}} 
 \label{t.asympt.off.3}
\end{align}
for any $y,\eta\in\mathbb{R}^{d^\prime}$.
\end{theorem}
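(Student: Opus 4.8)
The plan is to establish the two bounds separately, both starting from the representation \eqref{l.p_T.alt.1} with $x=\xi=0$, which gives
\[
    p_T((0,y),(0,\eta))
    =\frac1{\sqrt{2\pi T}^{d+d'}}
     \int_{\mathcal{W}_1^0} \hat v_{T,0,0}^{-d'/2}
       \exp\!\Bigl(-\frac{|\eta-y|^2}{2T\hat v_{T,0,0}}\Bigr)
       d\mu_1^0,
\]
where $\hat v_{T,0,0}=T^\gamma\int_0^1|\beta(t)|^{2\gamma}dt$. Writing $a=|\eta-y|$ and $v=\int_0^1|\beta|^{2\gamma}dt=v_{1,0,0}$, the exponent inside the integral becomes $-a^2/(2T^{1+\gamma}v)$, so the whole integrand is $T^{-\gamma d'/2}v^{-d'/2}\exp(-a^2/(2T^{1+\gamma}v))$. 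The prefactor $\sqrt{T}^{-(d+d')}$ and the power $T^{-\gamma d'/2}$ contribute only $O(\log T)$ after taking $T\log(\cdot)$, hence are negligible; the same is true of the factor $v^{-d'/2}$, which is controlled in every $L^p(\mu_1^0)$ by Lemma~\ref{lem.non-deg}. So the task reduces to estimating $T\log\int v^{-d'/2}\exp(-a^2/(2T^{1+\gamma}v))d\mu_1^0$ from both sides.

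\textbf{Upper bound.} For the upper estimate I would simply drop $v^{-d'/2}$ after a H\"older split (or bound $v^{-d'/2}$ in $L^{p}$ and raise the exponential to the conjugate power, absorbing the loss by letting $p\searrow1$ at the end), and then bound the exponential deterministically using that $v=\int_0^1|\beta(t)|^{2\gamma}dt\le \max_t|\beta(t)|^{2\gamma}\le$ (a pathwise quantity). More efficiently, Jensen's inequality gives $\int_0^1|\beta(t)|^{2\gamma}dt\ge(\int_0^1|\beta(t)|^{2}dt)^{\gamma}$ when $\gamma\le1$ and the reverse direction is not what we want; instead, to make the exponential small uniformly one bounds $v\le\|\beta\|_{\mathcal W_1^0}^{2\gamma}$, so $\exp(-a^2/(2T^{1+\gamma}v))\le\exp(-a^2/(2T^{1+\gamma}\|\beta\|_{\mathcal W_1^0}^{2\gamma}))$, and then controls $\|\beta\|_{\mathcal W_1^0}$ via \eqref{eq:garsia1} by $96\,T^{1/6}\cdot(\text{something})$—wait, more directly by the Gaussian tail $\mu_1^0(\|\beta\|_{\mathcal W_1^0}>\rho)\le Ce^{-c\rho^2}$. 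Splitting the integral over $\{\|\beta\|_{\mathcal W_1^0}\le\rho\}$ and its complement, optimizing $\rho$ as a function of $T$ (one finds $\rho\sim(a^2/T^{1+\gamma})^{1/(2+2\gamma)}$, i.e. $\rho^{2\gamma}T^{1+\gamma}\sim a^2/\rho^2$), and using that on the good set $v\le\rho^{2\gamma}$ while on the bad set the Gaussian tail dominates, yields $T\log(\cdot)\lesssim -c\,\rho^2\sim -c\,a^{2/(1+\gamma)}$. The precise constant $2^{-(1-\gamma)/(1+\gamma)}d^{-\gamma/(1+\gamma)}$ should emerge by using the sharp one-sided bound $v=\int_0^1|\beta|^{2\gamma}\le d^{\gamma}\max_i(\max_t|\beta^i(t)|)^{2\gamma}$ together with the exact constant in the component-wise small-ball/large-deviation estimate from Lemma~\ref{lem.max.prob} (or rather its large-deviation analogue), and then doing the $\rho$-optimization of $\frac12\rho^2 + \frac{a^2}{2T^{1+\gamma}d^\gamma\rho^{2\gamma}}$ exactly, just as in the proof of Lemma~\ref{lem.m()}(ii) via $\Psi_{p,q,0}$.

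\textbf{Lower bound.} For the lower estimate I would restrict the integral to an event on which $v$ is bounded \emph{below}, to keep the exponential from being too small, while paying the price of the probability of that event. The natural event is $A=\{B_{1,0,0}\le\kappa\}$ for a constant $\kappa$ to be chosen: there $\max_t|\beta(t)|$ can still be of order one, but more usefully, on $A$ one has the lower bound for $v$ analogous to \eqref{l.non-deg.30}, namely $v\ge(\max_t|\beta(t)|/4)^{2\gamma}(\,\max_t|\beta(t)|^6/(192^6\kappa^6)\wedge\tfrac12\,)$. Choosing instead to condition on $\{\max_t|\beta(t)|\ge s\}\cap\{B_{1,0,0}\le\varepsilon(\delta_0)s^{3}\}$—so that the wedge in the displayed lower bound is governed by $((2\varepsilon(\delta_0))^6\wedge1)$—one gets $v\ge c\,s^{2\gamma}$ with $c=((2\varepsilon(\delta_0))^6\wedge1)/4^{2\gamma}$ or the like, hence $\exp(-a^2/(2T^{1+\gamma}v))\ge\exp(-a^2/(2cT^{1+\gamma}s^{2\gamma}))$, while the probability of this event is bounded below using the Gaussian lower bound for $\mu_1^0(\max_t|\beta(t)|\ge s)$ (of order $e^{-Cs^2}$, or a polynomial correction thereof) and the fact that $\{B_{1,0,0}\le\varepsilon(\delta_0)s^3\}$ has probability bounded away from $0$, or at worst with an exponentially small correction controlled by $\delta_0$. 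Then $T\log p_T\ge -\tfrac12\big(s^2\cdot\text{const}+\frac{a^2}{cT^{1+\gamma}s^{2\gamma}}\big)+o(1/T)$; optimizing over $s$—again the same $\Psi_{p,q,0}$-type minimization—produces the exponent $-C(\gamma,\delta_0)a^{2/(1+\gamma)}$ with the stated constant $2^{3\gamma/(1+\gamma)}(1+\gamma)\gamma^{-\gamma/(1+\gamma)}((2\varepsilon(\delta_0))^6\wedge1)^{-1/(1+\gamma)}$. Here the appearance of $\varepsilon(\delta_0)$ is exactly what encodes, through $\delta_0=\sup\{\delta:\exp(\delta B_{1,0,0}^2)\in L^1\}$, how well one can simultaneously force $\max_t|\beta(t)|$ large and $B_{1,0,0}$ small.

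\textbf{Main obstacle.} The delicate point is the lower bound: one needs the Fernique/exponential-integrability threshold $\delta_0$ to enter with the right constant, which means the conditioning event must be chosen so that the \emph{cost} of $\{B_{1,0,0}\le\varepsilon(\delta_0)s^3\}$ is asymptotically negligible compared with the $s^2$ cost of $\{\max_t|\beta|\ge s\}$, or at least contributes only through the factor $(2\varepsilon(\delta_0))^6\wedge1$ in the pathwise lower bound for $v$ rather than through the probability. Getting these two scalings to balance—$B_{1,0,0}\sim s^3$ versus $\max_t|\beta|\sim s$—and checking that $\mu_1^0(\max_t|\beta|\ge s,\ B_{1,0,0}\le\varepsilon(\delta_0)s^3)$ does not decay faster than $e^{-Cs^2}$ (so the $T\log$ limit is governed solely by the $s^2$ and the exponential terms) is where the real work lies; the upper bound and all the $\Psi_{p,q,0}$-minimizations are routine given Lemmas~\ref{lem.non-deg}, \ref{lem.max.prob}, and \ref{lem.m()}.
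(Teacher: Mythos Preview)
Your upper-bound strategy is essentially the paper's: H\"older to peel off $v^{-d'/2}$ (using Lemma~\ref{lem.non-deg}), the crude pathwise bound $v\le M_{1,0}^{2\gamma}$ with $M_{1,0}=\|\beta\|_{\mathcal W_1^0}$, a split on $\{M_{1,0}<\lambda T^{-1/2}\}$ versus its complement, the Gaussian tail $\mu_1^0(M_{1,0}\ge a)\le 2d\,e^{-2a^2/d}$ from Lemma~\ref{lem.prob}, and then optimization in $\lambda$. One small correction: the factor $d^{-\gamma/(1+\gamma)}$ does \emph{not} come from bounding $v\le d^\gamma\max_i(\max_t|\beta^i|)^{2\gamma}$; it comes from the $d$ in the exponent of the tail bound for $M_{1,0}$ itself.

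The lower bound, however, has a genuine scaling error. You propose conditioning on $\{M_{1,0}\ge s\}\cap\{B_{1,0,0}\le\varepsilon(\delta_0)s^{3}\}$. But on that event the pathwise bound \eqref{l.non-deg.30} gives
\[
    v\;\ge\;\Bigl(\frac{M_{1,0}}{2}\Bigr)^{2\gamma}
      \Bigl(\frac{M_{1,0}^6}{192^6 B_{1,0,0}^6}\wedge\tfrac12\Bigr)
    \;\ge\;\Bigl(\frac{s}{2}\Bigr)^{2\gamma}
      \Bigl(\frac{1}{192^6\varepsilon(\delta_0)^6 s^{12}}\wedge\tfrac12\Bigr),
\]
and for large $s$ this behaves like $s^{2\gamma-12}\to0$, not like a constant times $s^{2\gamma}$. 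The wedge is \emph{not} governed by $((2\varepsilon(\delta_0))^6\wedge1)$ under your scaling; the exponential factor then becomes uncontrollably small and the argument collapses. The correct constraint is \emph{linear}: one takes (for $\delta<\delta_0$)
\[
    A_{\lambda,T}=\Bigl\{B_{1,0,0}\le \frac{\lambda T^{-1/2}}{192\,\varepsilon(\delta)}\Bigr\},
    \qquad
    B_{\lambda,T}=\bigl\{\lambda T^{-1/2}\le M_{1,0}\le 2\sqrt{d}\,\lambda T^{-1/2}\bigr\},
\]
so that on $A_{\lambda,T}\cap B_{\lambda,T}$ one gets $M_{1,0}^6/(192^6B_{1,0,0}^6)\ge\varepsilon(\delta)^6$, i.e.\ a genuine constant lower bound, and hence $v\gtrsim(\lambda T^{-1/2})^{2\gamma}$. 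The very definition $\varepsilon(\delta)^2=\delta/(8\cdot192^2)$ is chosen so that Chebyshev with the Fernique moment gives $\mu_1^0(A_{\lambda,T}^c)\le C_\delta e^{-8\lambda^2/T}$, which decays \emph{faster} than $\mu_1^0(B_{\lambda,T})\sim e^{-2\lambda^2/T}$ (from the lower bound \eqref{l.prob.2}); thus $\mu_1^0(A_{\lambda,T}\cap B_{\lambda,T})\ge\mu_1^0(B_{\lambda,T})-\mu_1^0(A_{\lambda,T}^c)$ is still of order $e^{-2\lambda^2/T}$. So what you flag as the ``main obstacle'' is in fact routine once the scaling is right---and note that a \emph{two-sided} constraint on $M_{1,0}$ is used, the upper side being needed to control $v^{-d'/2}\ge M_{1,0}^{-\gamma d'}$ from below by a power of $T$. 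After that, the $\Psi_{p,q,0}$-minimization in $\lambda$ and letting $\delta\nearrow\delta_0$ give the stated constant.
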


The dependence on the parameter $\gamma$ can be seen in
these estimations, especially in the growth order in
$|\eta-y|$ as it tends to infinity, which is similar as in
the case when $(x,\xi)\ne(0,0)$
(cf. Theorem~\ref{thm.asympt.off}).
The proof is divided into several steps, each step being a
lemma.
In what follows, let $M_{1,0}$ be the random variable given
in Lemma~\ref{lem.max.prob};
$M_{1,0}(w)=\|w\|_{\mathcal{W}_1^0}$ for 
$w\in\mathcal{W}_1^0$.

\begin{lemma}\label{lem.prob}
It holds
\begin{align}
    & \mu_1^0( {M}_{1,0}\ge a)
      \le 2d\exp\Bigl(-\frac{2a^2}{d}\Bigr),
 \label{l.prob.1}
    \\
    & \mu_1^0(a\le  {M}_{1,0}\le 2\sqrt{d} a)
      \ge 2\exp(-2a^2)\{1-(d+1)\exp(-6a^2)\}
      \quad\text{for }a>0.
 \label{l.prob.2}
\end{align}
\end{lemma}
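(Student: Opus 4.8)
The plan is to obtain both bounds from the one-dimensional reflection-type estimate for the Brownian bridge, exploiting the independence of the $d$ coordinates of $\beta$ under $\mu_1^0$. First I would recall the standard fact that for a one-dimensional pinned Brownian motion $\{\beta^i(t)\}_{t\in[0,1]}$ on $\mathcal{W}_1^0$ one has, for $a>0$,
\[
   \mu_1^0\Bigl(\max_{t\in[0,1]}|\beta^i(t)|\ge a\Bigr)
     \le 2\exp(-2a^2)
   \quad\text{and}\quad
   \mu_1^0\Bigl(\max_{t\in[0,1]}|\beta^i(t)|\ge a\Bigr)
     \ge \exp(-2a^2)\{1-\exp(-6a^2)\},
\]
which can be read off from the distribution of the maximum of the Brownian bridge (or derived from the reflection principle together with the exact Gaussian density of $\beta^i(t)$; note the exponent $2a^2$ comes from $a^2/(2\cdot\frac14)$ at the worst time $t=\frac12$). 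The lower bound above should be arranged so that the constant $6$ matching \eqref{l.prob.2} appears; one gets it from a two-sided inclusion–exclusion on the events $\{\max\ge a\}$ and $\{\min\le -a\}$ for the single coordinate.

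For \eqref{l.prob.1}, since $M_{1,0}=\max_i\max_t|\beta^i(t)|$, the union bound gives
\[
   \mu_1^0(M_{1,0}\ge a)
   \le \sum_{i=1}^d \mu_1^0\Bigl(\max_{t\in[0,1]}|\beta^i(t)|\ge a\Bigr)
   \le 2d\exp(-2a^2),
\]
and this is weakened to $2d\exp(-2a^2/d)$ as stated. For \eqref{l.prob.2}, I would write the event $\{a\le M_{1,0}\le 2\sqrt d\,a\}$ as the intersection of $\{M_{1,0}\ge a\}$ with $\bigcap_i\{\max_t|\beta^i(t)|\le 2\sqrt d\,a\}$, and bound
\[
   \mu_1^0(a\le M_{1,0}\le 2\sqrt d\,a)
   \ge \mu_1^0(M_{1,0}\ge a) - \sum_{i=1}^d \mu_1^0\Bigl(\max_{t\in[0,1]}|\beta^i(t)|> 2\sqrt d\,a\Bigr).
\]
For the first term I use $\mu_1^0(M_{1,0}\ge a)\ge \mu_1^0(\max_t|\beta^1(t)|\ge a)\ge \exp(-2a^2)\{1-\exp(-6a^2)\}$; for the subtracted sum I use the one-dimensional upper bound at level $2\sqrt d\,a$, giving $\sum_i 2\exp(-2(2\sqrt d\,a)^2)=2d\exp(-8da^2)\le 2d\exp(-8a^2)$. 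Factoring out $\exp(-2a^2)$ and combining the $\exp(-6a^2)$ and $\exp(-6a^2)$-type remainders yields the bracket $1-(d+1)\exp(-6a^2)$ after bounding $2d\exp(-8a^2)\le d\exp(-6a^2)$ (for the relevant range) and absorbing the $\exp(-6a^2)$ from the lower one-dimensional estimate; a small amount of care with constants produces exactly $(d+1)$.

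The only genuinely delicate point is getting the numerical constants to line up — in particular arranging the one-dimensional lower bound so that the $\exp(-6a^2)$ correction is clean and checking that the subtracted term $\sum_i\mu_1^0(\max_t|\beta^i|>2\sqrt d a)$ really is dominated by a constant multiple of $\exp(-6a^2)$ after pulling out $\exp(-2a^2)$; this works because $8d\ge 8>6$. Everything else is a routine application of independence of coordinates and the union bound, so the main obstacle is purely bookkeeping of constants rather than any conceptual difficulty. I would present the one-dimensional estimates first as a displayed claim (with a one-line justification via the bridge's maximum law), then derive \eqref{l.prob.1} and \eqref{l.prob.2} in two short paragraphs as above.
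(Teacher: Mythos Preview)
Your overall strategy---one-dimensional bridge estimates, union bound over coordinates, and subtraction for the two-sided event---is exactly the paper's. There is, however, a genuine error in your identification $M_{1,0}=\max_i\max_t|\beta^i(t)|$. By definition $M_{1,0}=\max_t|\beta(t)|$ with $|\cdot|$ the Euclidean norm on $\mathbb{R}^d$, and for $d>1$ these differ. The inclusion you actually need for \eqref{l.prob.1} is
\[
    \{M_{1,0}\ge a\}\subset\bigcup_{i=1}^d\Bigl\{\max_{t}|\beta^i(t)|\ge \frac{a}{\sqrt d}\Bigr\},
\]
since $|\beta(t)|\ge a$ only forces some coordinate to exceed $a/\sqrt d$, not $a$. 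This yields $2d\exp(-2a^2/d)$ directly; the factor $1/d$ in the exponent is the honest output of the union bound, not a gratuitous weakening of a stronger inequality, and your displayed bound $\mu_1^0(M_{1,0}\ge a)\le 2d\exp(-2a^2)$ is not justified by the argument you give. The same misidentification recurs in your treatment of the subtracted term for \eqref{l.prob.2}.

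A second, smaller slip: the one-dimensional lower bound should carry a factor $2$, namely $\mu_1^0(\max_t|\beta^1(t)|\ge a)\ge 2\exp(-2a^2)\{1-\exp(-6a^2)\}$, coming from the first two terms of the alternating series $2\sum_{k\ge1}(-1)^{k+1}\exp(-2k^2a^2)$. With both corrections in place the computation for \eqref{l.prob.2} becomes \emph{exact} rather than approximate: applying \eqref{l.prob.1} at level $2\sqrt d\,a$ gives $\mu_1^0(M_{1,0}>2\sqrt d\,a)\le 2d\exp(-8a^2)=2\exp(-2a^2)\cdot d\exp(-6a^2)$, and subtracting this from $2\exp(-2a^2)\{1-\exp(-6a^2)\}$ produces $2\exp(-2a^2)\{1-(d+1)\exp(-6a^2)\}$ on the nose. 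No ``relevant range'' hedge is needed, and the auxiliary inequality $2d\exp(-8a^2)\le d\exp(-6a^2)$ you invoke (which fails for small $a$) is never required.
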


\begin{proof}
It is known (\cite[p.39 (12)]{shorack_wellner}) that
\[
    \mu_1^0\Bigl(\max_{t\in[0,1]}|\beta^i(t)|\ge a\Bigr)
    =2\sum_{k=1}^\infty (-1)^{k+1} \exp(-2k^2a^2)
    \quad\text{for $1\le i\le d$ and }a>0,
\]
where $\beta^i(t)$ is the $i$th component of $\beta(t)$.
Since $k\mapsto \exp(-k^2a^2)$ is non-increasing, we have
\[
    2\exp(-2a^2)\{1-\exp(-6a^2)\} \le
    \mu_1^0\Bigl(\max_{t\in[0,1]}|\beta^i(t)|\ge a\Bigr)
    \le 2\exp(-2a^2).
\]
Combining this with the inclusion
\[
    \Bigl\{\max_{t\in[0,1]}|\beta^1(t)|\ge a\Bigr\}
    \subset
    \{ {M}_{1,0}\ge a\}
    \subset
    \bigcup_{i=1}^{d} \Bigl\{\max_{t\in[0,1]}|\beta^i(t)|
        \ge \frac{a}{\sqrt{d}}\Bigr\},
\]
we obtain \eqref{l.prob.1} and
\[
    2\exp(-2a^2)\{1-\exp(-6a^2)\} 
    \le \mu_1^0( {M}_{1,0}\ge a).
\]
This and \eqref{l.prob.1} with $2\sqrt{d}a$ for $a$ yield 
\eqref{l.prob.2}.
\end{proof}

\begin{lemma}\label{lem.upper}
It holds
\begin{equation}\label{l.upper.1}
    \limsup_{T\to0} T\log p_T((0,y),(0,\eta))
    \le -2^{-\frac{1-\gamma}{1+\gamma}} d^{-\frac{\gamma}{1+\gamma}}
      |\eta-y|^{\frac2{1+\gamma}}.
\end{equation}
In particular, the upper estimate \eqref{t.asympt.off.3} in
Theorem~\ref{thm.asympt.off.00} holds.
\end{lemma}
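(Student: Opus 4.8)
The plan is to use the scaling built into Lemma~\ref{lem.p_T.alt} to push all of the $T$-dependence into an explicit prefactor and a single exponential, and then reduce the estimate to a tail bound on $M_{1,0}=\|\,\cdot\,\|_{\mathcal{W}_1^0}$. Taking $x=\xi=0$ in \eqref{l.p_T.alt.1} and using $\hat{v}_{T,0,0}=T^\gamma v_{1,0,0}$ with $v_{1,0,0}=\int_0^1|\beta(t)|^{2\gamma}\,dt$, one would write
\[
    p_T((0,y),(0,\eta))
    = C_T\int_{\mathcal{W}_1^0} v_{1,0,0}^{-\frac{d^\prime}2}
      \exp\Bigl(-\frac{|\eta-y|^2}{2T^{1+\gamma}\,v_{1,0,0}}\Bigr)\,d\mu_1^0,
    \qquad
    C_T=(2\pi T)^{-\frac{d+d^\prime}{2}}T^{-\frac{\gamma d^\prime}{2}}.
\]
Since $T\log C_T\to0$, and since for $\eta=y$ the integral is a finite constant by Lemma~\ref{lem.non-deg}, the assertion is trivial in that case; so I assume $a:=|\eta-y|>0$, and it remains to bound $\limsup_{T\to0}T\log$ of the integral.

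First I would strip off the singular factor $v_{1,0,0}^{-d^\prime/2}$ by H\"older's inequality with conjugate exponents $p,q\in(1,\infty)$: by Lemma~\ref{lem.non-deg} one has $v_{1,0,0}^{-1}\in L^{\infty-}(\mu_1^0)$, so $\|v_{1,0,0}^{-d^\prime/2}\|_{L^q(\mu_1^0)}$ is a finite constant, independent of $T$, whose contribution to $T\log$ vanishes. Combined with the pointwise bound $v_{1,0,0}\le M_{1,0}^{2\gamma}$, this reduces the problem to estimating
\[
    \Bigl(\int_{\mathcal{W}_1^0}\exp\Bigl(-\frac{c}{M_{1,0}^{2\gamma}}\Bigr)\,d\mu_1^0\Bigr)^{1/p},
    \qquad c=\frac{pa^2}{2T^{1+\gamma}}\longrightarrow\infty.
\]

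The core step is the tail estimate for $\int_{\mathcal{W}_1^0}\exp(-c/M_{1,0}^{2\gamma})\,d\mu_1^0$ as $c\to\infty$. I would split over $\{M_{1,0}\le r\}$, where the integrand is at most $\exp(-c/r^{2\gamma})$, and over $\{M_{1,0}>r\}$, which contributes at most $\mu_1^0(M_{1,0}\ge r)\le 2d\exp(-2r^2/d)$ by \eqref{l.prob.1}. Choosing $r$ to balance the two exponents, i.e.\ $r^{2+2\gamma}=cd/2$, makes both equal to $2^{\gamma/(1+\gamma)}d^{-\gamma/(1+\gamma)}c^{1/(1+\gamma)}$, so that
\[
    \int_{\mathcal{W}_1^0}\exp\Bigl(-\frac{c}{M_{1,0}^{2\gamma}}\Bigr)\,d\mu_1^0
    \le (1+2d)\exp\Bigl(-2^{\frac{\gamma}{1+\gamma}}d^{-\frac{\gamma}{1+\gamma}}c^{\frac1{1+\gamma}}\Bigr).
\]
Substituting $c=pa^2/(2T^{1+\gamma})$, raising to the power $1/p$, combining with $C_T$ and the H\"older constant, taking $T\log$ and letting $T\to0$ then yields
\[
    \limsup_{T\to0}T\log p_T((0,y),(0,\eta))
    \le -p^{-\frac{\gamma}{1+\gamma}}\,2^{-\frac{1-\gamma}{1+\gamma}}\,d^{-\frac{\gamma}{1+\gamma}}\,a^{\frac2{1+\gamma}};
\]
finally letting $p\searrow1$ removes the factor $p^{-\gamma/(1+\gamma)}$ and gives \eqref{l.upper.1}, hence the upper estimate \eqref{t.asympt.off.3} in Theorem~\ref{thm.asympt.off.00}.

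The main obstacle is arithmetic bookkeeping: one must carry the constant $2^{-(1-\gamma)/(1+\gamma)}d^{-\gamma/(1+\gamma)}$ correctly through the optimization in $r$ and the substitution for $c$, and one must check that the auxiliary H\"older exponent $p$---which is forced in because $v_{1,0,0}^{-1}$ lies in every $L^q(\mu_1^0)$ but need not be bounded---enters only through the harmless factor $p^{-\gamma/(1+\gamma)}\to1$. The remaining ingredients (vanishing of $T\log C_T$, finiteness of the H\"older constant via Lemma~\ref{lem.non-deg}, the crude two-piece splitting of the tail integral) are routine.
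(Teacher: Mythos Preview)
Your proposal is correct and follows essentially the same route as the paper: both use the scaling identity from Lemma~\ref{lem.p_T.alt} to reduce to an integral over $\mathcal{W}_1^0$, apply H\"older's inequality to separate the factor $v_{1,0,0}^{-d'/2}$ (controlled by Lemma~\ref{lem.non-deg}), dominate $v_{1,0,0}\le M_{1,0}^{2\gamma}$, and then estimate $\int\exp(-c/M_{1,0}^{2\gamma})\,d\mu_1^0$ by a two-piece split using the tail bound \eqref{l.prob.1}, optimizing the cut and finally letting $p\searrow 1$. The only cosmetic difference is that the paper parametrizes the split point as $\lambda T^{-1/2}$ and optimizes over $\lambda$ directly, whereas you first pass to the variable $c=pa^2/(2T^{1+\gamma})$ and optimize over $r$; the two are the same up to a change of variable.
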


\begin{proof}
Due to Theorem~\ref{thm.asympt.on}, \eqref{l.upper.1} holds
if $y=\eta$.
Thus we assume $y\ne\eta$.

By \eqref{eq:p_T.ldp} and the very definition of $\nu_T$, we 
obtain
\begin{equation}\label{l.upper.21}
    p_T((0,y),(0,\eta))
    =\frac{T^{-\frac{d+d^\prime+\gamma d^\prime}2}}{\sqrt{2\pi}^{d+d^\prime}}
     \int_{\mathcal{W}_1^0} 
          \boldsymbol{v}_{0,0}^{-\frac{d^\prime}2}
     \exp\biggl(-\frac{|\eta-y|^2}{
           2T^{1+\gamma} \boldsymbol{v}_{0,0}} \biggr)
     d\mu_1^0.
\end{equation}
Let $p>1$. 
By H\"older's inequality, we have
\begin{equation}\label{l.upper.22}
    \limsup_{T\to0} T\log p_T((0,y),(0,\eta))
    \le \frac1p \limsup_{T\to0} T\log\biggl(
    \int_{\mathcal{W}_1^0} \exp\biggl(-\frac{p|\eta-y|^2}{
           2T^{1+\gamma} \boldsymbol{v}_{0,0}} \biggr) d\mu_1^0
    \biggr).
\end{equation}

Since $\boldsymbol{v}_{0,0}\le  {M}_{1,0}^{2\gamma}$,
it holds
\[
    \int_{\mathcal{W}_1^0} \exp\biggl(-\frac{p|\eta-y|^2}{
           2T^{1+\gamma} \boldsymbol{v}_{0,0}} \biggr) d\mu_1^0
    \le 
    \int_{\mathcal{W}_1^0} \exp\biggl(-\frac{p|\eta-y|^2}{
           2T^{1+\gamma}  {M}_{1,0}^{2\gamma}} \biggr) d\mu_1^0.
\]
For $\lambda>0$, using the decomposition 
$\mathcal{W}_1^0=\{ {M}_{1,0}<\lambda T^{-\frac12}\}
  \cup\{ {M}_{1,0}\ge \lambda T^{-\frac12}\}$ 
and Lemma~\ref{lem.prob}, we obtain
\[
    \int_{\mathcal{W}_1^0} \exp\biggl(-\frac{p|\eta-y|^2}{
           2T^{1+\gamma} \boldsymbol{v}_{0,0}} \biggr) d\mu_1^0
    \le \exp\biggl(
         -\frac{p|\eta-y|^2}{2T\lambda^{2\gamma}}
        \biggr)
        +2d\exp\biggl(-\frac{2\lambda^2}{dT}\biggr).
\]
Substituting this into \eqref{l.upper.22}, we obtain
\begin{align*}
    \limsup_{T\to0} T\log p_T((0,y),(0,\eta))
    & \le -\frac1p \sup_{\lambda>0}\biggl(
         \frac{p|\eta-y|^2}{2\lambda^{2\gamma}}
         \wedge
         \frac{2\lambda^2}{d}\biggr)
    \\
    & =-p^{\frac1{2+2\gamma}} 
     2^{-\frac{\gamma}{1+\gamma}}d^{-\frac1{2+2\gamma}}
      |\eta-y|^{\frac2{1+\gamma}}.
\end{align*}
Letting $p\searrow 1$, we obtain \eqref{l.upper.1}.
\end{proof}

\begin{lemma}\label{lem.lower}
It holds
\begin{equation}\label{l.lower.1}
    \liminf_{T\to0} T\log p_T((0,y),(0,\eta))
    \ge 
    -2^{\frac{3\gamma}{1+\gamma}}(1+\gamma)
    \gamma^{-\frac{\gamma}{1+\gamma}} \Bigl(
      (2 \varepsilon(\delta_0))^6\wedge1
      \Bigr)^{-\frac1{1+\gamma}}
      |\eta-y|^{\frac2{1+\gamma}}.
\end{equation}
In particular, the lower estimate \eqref{t.asympt.off.2} in
Theorem~\ref{thm.asympt.off.00} holds.
\end{lemma}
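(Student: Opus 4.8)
The plan is to obtain the lower bound by restricting the integral in the representation \eqref{l.upper.21} to a well-chosen set where $\boldsymbol{v}_{0,0}$ can be bounded below in terms of $M_{1,0}$ alone, and then optimizing. Recall from \eqref{l.upper.21} that
\[
    p_T((0,y),(0,\eta))
    =\frac{T^{-\frac{d+d^\prime+\gamma d^\prime}2}}{\sqrt{2\pi}^{d+d^\prime}}
     \int_{\mathcal{W}_1^0}
          \boldsymbol{v}_{0,0}^{-\frac{d^\prime}2}
     \exp\biggl(-\frac{|\eta-y|^2}{
           2T^{1+\gamma} \boldsymbol{v}_{0,0}} \biggr)
     d\mu_1^0.
\]
The first step is to produce, on an appropriate event, a \emph{lower} bound $\boldsymbol{v}_{0,0}\ge c\,M_{1,0}^{2\gamma}$ analogous to the upper bound $\boldsymbol{v}_{0,0}\le M_{1,0}^{2\gamma}$ used in Lemma~\ref{lem.upper}. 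Arguing exactly as in the proof of Lemma~\ref{lem.non-deg} (the derivation of \eqref{l.non-deg.30}), on the event $\{B_{1,0,0}\le \varepsilon(\delta) M_{1,0}\}$ — where $\varepsilon(\delta)^2=\delta/(8\cdot192^2)$ is arranged precisely so that $96B_{1,0,0}(M_{1,0}^6/(192^6B_{1,0,0}^6))^{1/6}\le M_{1,0}/2$ holds in a clean form — the path $w+\ell^{1,0,0}=w$ stays above $M_{1,0}/2$ on a sub-interval of length $\bigl((2\varepsilon(\delta))^6\wedge1\bigr)$, hence
\[
    \boldsymbol{v}_{0,0}\ge \Bigl(\tfrac{M_{1,0}}2\Bigr)^{2\gamma}
       \bigl((2\varepsilon(\delta))^6\wedge1\bigr)
    \quad\text{on }\{B_{1,0,0}\le \varepsilon(\delta) M_{1,0}\}.
\]

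The second step is to choose the integration region. Fix a scale $\lambda>0$ and restrict to $E_T=\{\lambda T^{-1/2}\le M_{1,0}\le 2\sqrt d\,\lambda T^{-1/2}\}\cap\{B_{1,0,0}\le\varepsilon(\delta_0)M_{1,0}\}$. On $E_T$ the integrand is bounded below by $(2\sqrt d\,\lambda T^{-1/2})^{-\gamma d^\prime}\exp\bigl(-|\eta-y|^2/(2T\,\kappa\,\lambda^{2\gamma})\bigr)$ with $\kappa=2^{-2\gamma}\bigl((2\varepsilon(\delta_0))^6\wedge1\bigr)$, so it remains to bound $\mu_1^0(E_T)$ from below. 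Here I would use $\mu_1^0(E_T)\ge \mu_1^0(\lambda T^{-1/2}\le M_{1,0}\le 2\sqrt d\,\lambda T^{-1/2})-\mu_1^0(B_{1,0,0}>\varepsilon(\delta_0)M_{1,0},\,M_{1,0}\le 2\sqrt d\,\lambda T^{-1/2})$; the first term is controlled from below by \eqref{l.prob.2} with $a=\lambda T^{-1/2}$, which gives $\log\mu_1^0(\ge a)$ of order $-2\lambda^2/T$, while the second term is handled by noting $\{B_{1,0,0}>\varepsilon(\delta_0)M_{1,0}\}\subseteq\{\exp(\delta_0 B_{1,0,0}^2)>\exp(\delta_0\varepsilon(\delta_0)^2M_{1,0}^2)\}$ and that, on $M_{1,0}\le 2\sqrt d\,\lambda T^{-1/2}$ this is a fixed superexponentially small quantity in $1/T$ provided $\delta_0\varepsilon(\delta_0)^2(2\sqrt d\lambda)^2$ is taken large — actually it is cleaner to bound it directly by a Markov/Fernique estimate using the finiteness $\exp(\delta_0' B_{1,0,0}^2)\in L^1$ for some $\delta_0'>\delta_0$, noting $B_{1,0,0}^2 > \varepsilon(\delta_0)^2 M_{1,0}^2 \ge \varepsilon(\delta_0)^2\lambda^2 T^{-1}$ forces exponential decay in $1/T$ at a rate dominating $2\lambda^2/T$ when $\lambda$ is small. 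Combining,
\[
    \liminf_{T\to0}T\log p_T((0,y),(0,\eta))
    \ge -\frac{|\eta-y|^2}{2\kappa\lambda^{2\gamma}}-2\lambda^2,
\]
and optimizing over $\lambda>0$ (the minimum of $A\lambda^{-2\gamma}+B\lambda^2$ is $(1+\gamma)\gamma^{-\gamma/(1+\gamma)}(A)^{1/(1+\gamma)}(B)^{\gamma/(1+\gamma)}$ up to constants) yields the stated bound \eqref{l.lower.1}, after substituting $A=|\eta-y|^2/(2\kappa)$, $B=2$, and simplifying $\kappa^{-1/(1+\gamma)}=2^{2\gamma/(1+\gamma)}\bigl((2\varepsilon(\delta_0))^6\wedge1\bigr)^{-1/(1+\gamma)}$, which together with the prefactor powers of $2$ gives the exponent $2^{3\gamma/(1+\gamma)}$.

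I expect the main obstacle to be the lower bound on $\mu_1^0(E_T)$: one must verify that subtracting off the bad set $\{B_{1,0,0}>\varepsilon(\delta_0)M_{1,0}\}$ does not destroy the exponential rate, which is exactly why $\varepsilon(\delta_0)$ is defined through $\delta_0$ — so that on the bad set the Gaussian tail of $B_{1,0,0}$ decays strictly faster than $e^{-2\lambda^2/T}$ as $\lambda\to0$, making the first (good) term in \eqref{l.prob.2} dominant. Care is needed because $\delta_0$ is only a supremum, so one should work with $\delta<\delta_0$, derive the estimate with $\varepsilon(\delta)$, and then let $\delta\uparrow\delta_0$ at the end; continuity of $\delta\mapsto(2\varepsilon(\delta))^6\wedge1$ then delivers the clean constant $\bigl((2\varepsilon(\delta_0))^6\wedge1\bigr)^{-1/(1+\gamma)}$. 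Everything else — the path-regularity argument for the lower bound on $\boldsymbol{v}_{0,0}$, and the elementary one-variable optimization — is routine and parallels computations already carried out in Lemmas~\ref{lem.non-deg} and~\ref{lem.upper}.
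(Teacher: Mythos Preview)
Your overall strategy---restrict to an event where $\boldsymbol{v}_{0,0}$ is bounded below by a constant times $M_{1,0}^{2\gamma}$, intersect with $B_{\lambda,T}=\{\lambda T^{-1/2}\le M_{1,0}\le 2\sqrt d\,\lambda T^{-1/2}\}$, subtract off the bad set, then optimize in $\lambda$ and let $\delta\nearrow\delta_0$---is exactly the paper's. But your choice of the ``good'' event has the threshold inverted, and this breaks the argument.

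You take the event $\{B_{1,0,0}\le \varepsilon(\delta)M_{1,0}\}$. On $B_{\lambda,T}$ this forces $B_{1,0,0}\le 2\sqrt d\,\varepsilon(\delta)\lambda T^{-1/2}$, a \emph{tiny} threshold since $\varepsilon(\delta)^2=\delta/(8\cdot192^2)$. The complement intersected with $B_{\lambda,T}$ contains $\{B_{1,0,0}>\varepsilon(\delta)\lambda T^{-1/2}\}$, and the Fernique/Markov bound gives only
\[
  \mu_1^0\bigl(B_{1,0,0}>\varepsilon(\delta)\lambda T^{-1/2}\bigr)
  \le C_\delta\exp\bigl(-\delta\,\varepsilon(\delta)^2\lambda^2/T\bigr),
  \qquad
  \delta\,\varepsilon(\delta)^2=\tfrac{\delta^2}{8\cdot192^2}\ll 2,
\]
so this decays far \emph{slower} than $\mu_1^0(B_{\lambda,T})\sim e^{-2\lambda^2/T}$, and the subtraction yields nothing. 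Neither of your proposed fixes works: you cannot take $\delta_0'>\delta_0$ with $\exp(\delta_0'B_{1,0,0}^2)\in L^1$ (that contradicts the definition of $\delta_0$), and ``taking $\lambda$ small'' does not help because both rates are proportional to $\lambda^2/T$.

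The paper instead uses $A_{\lambda,T}=\{B_{1,0,0}\le \lambda T^{-1/2}/(192\varepsilon(\delta))\}$, i.e.\ on $B_{\lambda,T}$ the condition $192\varepsilon(\delta)B_{1,0,0}\le M_{1,0}$---the \emph{reciprocal} of your threshold. This is the point of the definition of $\varepsilon(\delta)$: Markov then gives $\mu_1^0(A_{\lambda,T}^c)\le C_\delta\exp(-\delta\lambda^2/(192^2\varepsilon(\delta)^2 T))=C_\delta\exp(-8\lambda^2/T)$, which safely beats $e^{-2\lambda^2/T}$. At the same time, on $A_{\lambda,T}\cap B_{\lambda,T}$ one has $M_{1,0}^6/(192^6B_{1,0,0}^6)\ge\varepsilon(\delta)^6$, so the path-regularity bound from \eqref{l.non-deg.30} yields $\boldsymbol{v}_{0,0}\ge (M_{1,0}/2)^{2\gamma}\bigl(\varepsilon(\delta)^6\wedge\tfrac12\bigr)$, which is where the constant $(2\varepsilon(\delta_0))^6\wedge1$ in the statement comes from. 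With this correction the rest of your outline (the optimization in $\lambda$ and the limit $\delta\nearrow\delta_0$) goes through verbatim.
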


\begin{proof}
Due to Theorem~\ref{thm.asympt.on}, \eqref{l.lower.1} holds
if $y=\eta$.
Thus we assume $y\ne\eta$.

Let $\delta\in(0,\delta_0)$.
For $\lambda, T>0$, put
\[
    A_{\lambda,T}=\biggl\{B_{1,0,0}\le 
        \frac{\lambda T^{-\frac12}}{
               192 \varepsilon(\delta)} \biggr\}.
\]
It holds
\begin{equation}\label{l.lower.22}
    \mu_1^0(A_{\lambda,T}^c)
    \le C_\delta \exp\Bigl(-\frac{8\lambda^2}{T}\Bigr),
\end{equation}
where
\[
    C_\delta=\int_{\mathcal{W}_1^0} \exp(\delta B_{1,0,0}^2)
      d\mu_1^0<\infty.
\]

In repetition of the argument used to show
\eqref{l.non-deg.30}, this time with $x=\xi=0$, we can show
\[
    \boldsymbol{v}_{0,0}
    \ge \Bigl(\frac{ {M}_{1,0}}2\Bigr)^{2\gamma}
        \biggl(\frac{ {M}_{1,0}^6}{192^6B_{1,0,0}^6}
        \wedge\frac12\biggr).
\]
Thus
\[
    \boldsymbol{v}_{0,0}\boldsymbol{1}_{A_{\lambda,T}}
    \ge \Bigl(\frac{{M}_{1,0}}2\Bigr)^{2\gamma}
        \biggl(\Bigl({M}_{1,0}
            T^{\frac12}\lambda^{-1}
           \varepsilon(\delta)\Bigr)^6
           \wedge\frac12\biggr)
        \boldsymbol{1}_{A_{\lambda,T}}.
\]
By this, setting 
\[
    B_{\lambda,T}=\{\lambda T^{-\frac12}\le  
          {M}_{1,0}\le 2\sqrt{d}\lambda T^{-\frac12}\}
\]
and remembering 
$\boldsymbol{v}_{0,0}\le {M}_{1,0}^{2\gamma}$, we obtain
\begin{align}
    & \int_{\mathcal{W}_1^0} \boldsymbol{v}_{0,0}^{-\frac{d^\prime}2}
      \exp\biggl(-\frac{|\eta-y|^2}{
             2T^{1+\gamma}\boldsymbol{v}_{0,0}}\biggr)d\mu_1^0
 \nonumber
    \\
    & \quad
      \ge 
      \int_{A_{\lambda,T}\cap B_{\lambda,T}}
            {M}_{1,0}^{-\gamma d^\prime} \exp\biggl(
         -\frac{2^{2\gamma}|\eta-y|^2}{
             T^{1+\gamma} {M}_{1,0}^{2\gamma}
             (\{2( {M}_{1,0}
                T^{\frac12}\lambda^{-1}
                \varepsilon(\delta))^6\}\wedge1)}
        \biggr) d\mu_1^0
 \nonumber
    \\
    & \quad
      \ge (2\sqrt{d}\,\lambda)^{-\gamma d^\prime}
          T^{\frac{\gamma d^\prime}2}
          \exp\biggl(-\frac{2^{2\gamma}|\eta-y|^2}{
             T\lambda^{2\gamma}((2\varepsilon(\delta)^6)\wedge1)}
          \biggr)
          \mu_1^0(A_{\lambda,T}\cap B_{\lambda,T}).
 \label{l.lower.23}
\end{align}
By Lemma~\ref{lem.prob} and \eqref{l.lower.22}, we have
\begin{align*}
    \mu_1^0(A_{\lambda,T}\cap B_{\lambda,T})
    &\ge \mu_1^0(B_{\lambda,T})
         -\mu_1^0(A_{\lambda,T}^c)
    \\
    &\ge \exp\Bigl(-\frac{2\lambda^2}{T}\Bigr)
         \biggl\{2-(2+2d+C_\delta)
         \exp\Bigl(-\frac{6\lambda^2}{T}\Bigr)
         \biggr\}.
\end{align*}
Plugging this into \eqref{l.lower.23}, we obtain
\[
    \liminf_{T\to0} T \log\biggl(
     \int_{\mathcal{W}_1^0} \boldsymbol{v}_{0,0}^{-\frac{d^\prime}2}
      \exp\biggl(-\frac{|\eta-y|^2}{
             2T^{1+\gamma}\boldsymbol{v}_{0,0}}\biggr)
      d\mu_1^0\biggr)
     \ge -\inf_{\lambda\in(0,\infty)}
         \biggl\{\frac{2^{2\gamma}|\eta-y|^2}{
           \lambda^{2\gamma}\{(2\varepsilon(\delta)^6)
           \wedge1\}}
          +2\lambda^2\biggr\}.
\]
Due to the observation on $\Psi_{p,q,r}$ in the proof of
Lemma~\ref{lem.m()}, we see 
\[
    \inf_{\lambda\in(0,\infty)}
         \biggl\{\frac{2^{2\gamma}|\eta-y|^2}{
           \lambda^{2\gamma}\{(2\varepsilon(\delta)^6)
           \wedge1\}}
          +2\lambda^2\biggr\}
    =2^{\frac{3\gamma}{1+\gamma}}(1+\gamma)
    \gamma^{-\frac{\gamma}{1+\gamma}} \Bigl(
      (2 \varepsilon(\delta))^6\wedge1
      \Bigr)^{-\frac1{1+\gamma}}
      |\eta-y|^{\frac2{1+\gamma}}.
\]
Thus, by \eqref{l.upper.21}, letting
$\delta\nearrow\delta_0$, we obtain \eqref{l.lower.1}.
\end{proof}

\begin{remark}
Dominating $\boldsymbol{v}_{0,0}$ by $M_{1,0}^{2\gamma}$ is
best in the sense that
\[
    \sup_{w\in \mathcal{W}_1^0} 
      \frac{\boldsymbol{v}_{0,0}(w)}{M_{1,0}(w)^{2\gamma}}
    =1.
\]
This identity is seen as follows.
It is obvious that the supremum is bounded by $1$ from
above. 
For $n\in \mathbb{N}$, take 
$w_n=(w_n^1,\dots,w_n^d)\in \mathcal{W}_1^0$ given by 
\[
    w_n^1(t)
     =\begin{cases}
        nt & \text{for }t\in[0,\frac1{n}),
        \\
        1  & \text{for }t\in[\frac1{n},1-\frac1{n}),
        \\
        1+n\Bigl(t-\Bigl(1-\dfrac1{n}\Bigr)\Bigr)       
        & \text{for }t\in[1-\frac1{n},1],
      \end{cases}
     \quad
     w_n^2=\dots=w_n^d=0.
\]
Then
\[
    \frac{\boldsymbol{v}_{0,0}(w_n)}{M_{1,0}(w_n)^{2\gamma}}
    \ge 1-\frac2{n}
    \quad\text{for }n\in \mathbb{N}.
\]
Letting $n\to\infty$, we obtain that the supremum is
bounded by $1$ from below.
\end{remark}

\section{Remark}

For $\gamma\ge1$, we can sharpen the assertion of 
Lemma~\ref{lem.y=eta}.

Define 
\[
    f_{x,\xi}(a)=\int_{\mathcal{W}_1^0} \biggl(
     \int_0^1|a \beta(t)+\ell^{1,x,\xi}(t)|^{2\gamma}dt
     \biggr)^{-\frac{d^\prime}2} d\mu_1^0
    \quad\text{for }a\in \mathbb{R}.
\]
By \eqref{l.p_T.alt.1}, it holds 
\[
    p_T((x,y),(\xi,y))
    =\frac1{\sqrt{2\pi T}^{d+d^\prime}}
     \exp\Bigl(-\frac{|\xi-x|^2}{2T}\Bigr)
     f_{x,\xi}(\sqrt{T})
    \quad\text{for $T>0$ and $y\in \mathbb{R}^{d^\prime}$}.
\]

Modifying the proof of Lemma~\ref{lem.non-deg} slightly, we
have 
\[
    \sup_{|a|\le R} \int_{\mathcal{W}_1^0}
      \biggl(\int_0^1
        |a \beta(t)+\ell^{1,x,\xi}(t)|^{2\gamma} dt
      \biggr)^{-p} d\mu_1^0<\infty
     \quad\text{for any $p\in(1,\infty)$ and $R>0$}.
\]
This implies that $f_{x,\xi}\in C^{m(\gamma)}(\mathbb{R})$,
where
\[
    m(\gamma)
    =\begin{cases} 
      \infty & \text{if }\gamma\in \mathbb{N},
      \\
      [\gamma] & \text{if }\gamma\notin \mathbb{N}.
     \end{cases}
\]

By the Taylor expansion, we have
\[
    f_{x,\xi}(a)=f_{x,\xi}(0)+\sum_{k=1}^m
    \frac{f_{x,\xi}^{(k)}(0)}{k!} a^k +O(a^{m+1})
    \quad\text{for any $m<m(\gamma)$ as $a\to0$}.
\]
Since 
$\{\beta(t)\}_{t\in[0,1]}\overset{\text{law}}{\sim}
 \{-\beta(t)\}_{t\in[0,1]}$, it holds
\[
    f_{x,\xi}(-a)=f_{x,\xi}(a)
    \quad\text{for }a\in \mathbb{R}.
\]
This yields
$(-1)^k f_{x,\xi}^{(k)}(-a)=f_{x,\xi}^{(k)}(a)$
for $k\in \mathbb{N}$ and $a\in \mathbb{R}$.
In particular, 
\[
    f_{x,\xi}^{(k)}(0)=0
    \quad\text{for }k\notin 2 \mathbb{N}.
\]
Hence the Taylor expansion  comes down to
\[
        f_{x,\xi}(a)=f_{x,\xi}(0)
         +\sum_{k\in \mathbb{N},2k\le m}
    \frac{f_{x,\xi}^{(2k)}(0)}{(2k)!} a^{2k} +O(a^{m+1})
    \quad\text{for any $m<m(\gamma)$ as $a\to0$}.
\]
Thus we obtain
\begin{align*}
   p_T((x,y),(\xi,y))
   =& \frac1{\sqrt{2\pi T}^{d+d^\prime}}
    \exp\Bigl(-\frac{|\xi-x|^2}{2T}\Bigr)
   \\
    & \qquad\times
    \biggl\{
      \biggl(\int_0^1|\ell^{1,x,\xi}(t)|^{2\gamma} dt
        \biggr)^{-\frac{d^\prime}2}
      +\sum_{k\in \mathbb{N},2k\le m}
         \frac{f_{x,\xi}^{(2k)}(0)}{(2k)!} T^k 
      +O\Bigl(T^{\frac{m+1}2}\Bigr)
    \biggr\}
\end{align*}
as $T\to 0$ for any $m<m(\gamma)$.

\medskip\noindent

{\bf Acknowledgments}.
This work was supported by JSPS KAKENHI Grant Number
JP18K03336.


\bigskip
\hfill
\begin{minipage}{170pt}
Setsuo Taniguchi
\\
Faculty of Arts and Science 
\\
Kyushu University
\\
Fukuoka 819-0395, Japan
\\
(se2otngc@artsci.kyushu-u.ac.jp)
\end{minipage}

\end{document}